\theoremstyle{plain}
\newtheorem{theorem}{Theorem}[section]
\newtheorem{lemma}[theorem]{Lemma}
\newtheorem{remark}[theorem]{Remark}
\newtheorem{corollary}[theorem]{Corollary}
\newtheorem{definition}[theorem]{Definition}
\newtheorem{assumption}[theorem]{Assumption}
\numberwithin{equation}{section}
\newcommand{\inner}[1]{{\left\langle #1 \right\rangle}}
\newcommand{\CAL}[1]{{\mathcal #1}}
\newcommand{\BR}{\mathbb{R}}
\newcommand{\ds}{\displaystyle}
\newcommand{\tz}{\widetilde{z}}
\newcommand{\dt}{\mathrm{d}t}
\newcommand{\ltwo}[1]{\big\| #1 \big\|_{L^2(\Omega)}}
\newcommand{\inltwo}[1]{\left\langle #1 \right\rangle}
\newcommand{\calZ}{\CAL{Z}}
\newcommand{\calT}{\CAL{T}}
\begin{document}

\title{Global Existence and Exponential Stability for a Nonlinear Thermoelastic Kirchhoff-Love Plate}

\author{Irena Lasiecka\thanks{Department of Mathematical Sciences, University of Memphis, Memphis, TN \hfill \texttt{lasiecka@memphis.edu}} \and
Michael Pokojovy\thanks{Department of Mathematics, Karlsruhe Institute of Technology, Karlsruhe, Germany \hfill \texttt{michael.pokojovy@kit.edu}} \and
Xiang Wan\thanks{University of Virginia, Charlottesville, VA \hfill \texttt{xw5he@virginia.edu}}}

\date{\today}

\maketitle

\begin{abstract}
	We study an initial-boundary-value problem for a quasilinear thermoelastic plate 
	of Kirchhoff \& Love-type with parabolic heat conduction due to Fourier,
	mechanically simply supported and held at the reference temperature on the boundary.
	For this problem, we show the short-time existence and uniqueness of classical solutions
	under appropriate regularity and compatibility assumptions on the data.
	Further, we use barrier techniques to prove the global existence and exponential stability of solutions under a smallness condition on the initial data.
	It is the first result of this kind established for a quasilinear non-parabolic thermoelastic Kirchhoff \& Love plate in multiple dimensions.
\end{abstract}

\begin{center}
\begin{tabular}{p{1.0in}p{5.0in}}
	\textbf{Key words:} & Kirchhoff-Love plates; nonlinear thermoelasticity; Hyperbolic-Parabolic PDE systems;
			global well-posedness; classical solutions; exponential stability \\
	\textbf{MSC (2010):} & Primary
	35M33,  
	35Q74,  
	74B20,  
	74F05,  
	74K20;  
	\\ &
	Secondary
	35A01,  
	35A02,  
	35A09,  
	35B40   
\end{tabular}
\end{center}

%

\section{Introduction}
\label{SECTION_INTRODUCTION}
Let $\Omega \subset \mathbb{R}^{d}$ ($d =2 \mbox{ or } 3$) be a bounded domain with a smooth boundary
representing the mid-plane of a thermoelastic plate.
With $w$ and $\theta$
denoting the vertical deflection and an appropriately weighted thermal moment with respect to the plate thickness,
both depending on a scaled time variable $t > 0$ and the space variable $(x_{1}, x_{2}) \in \Omega$,
the nonlinear Kirchhoff \& Love thermoelastic plate system reads as
\begin{subequations}
\begin{align}
	w_{tt} - \gamma \triangle w_{tt} + a(-\triangle w) \triangle^{2} w + \alpha \triangle \theta &= f(-\triangle w, -\nabla \triangle w)\phantom{0} \text{ in } (0, \infty) \times \Omega, 
	\label{EQUATION_INTRODUCTION_KIRCHHOFF_LOVE_PDE_1} \\
	\beta \theta_{t} - \eta \triangle \theta + \sigma \theta - \alpha \triangle w_{t} &= 0\phantom{f(-\triangle w, -\nabla \triangle w)} \text{ in } (0, \infty) \times \Omega
	\label{EQUATION_INTRODUCTION_KIRCHHOFF_LOVE_PDE_2}
\end{align}
along with the boundary conditions (hinged mechanical/Dirichlet thermal)
\begin{equation}
	w = \triangle w = \theta = 0 \text{ in } (0, \infty) \times \Omega
	\label{EQUATION_INTRODUCTION_KIRCHHOFF_LOVE_BC}
\end{equation}
and the initial conditions
\begin{equation}
	w(0, \cdot) = w^{0}, \quad w_{t}(0, \cdot) = w^{1}, \quad \theta(0, \cdot) = \theta^{0} \text{ in } \Omega.
	\label{EQUATION_INTRODUCTION_KIRCHHOFF_LOVE_IC}
\end{equation}
\end{subequations}
Here, $\alpha, \beta, \gamma, \eta, \sigma$ are positive constants and
$a \colon \mathbb{R} \to (0, \infty)$ as well as $f \colon \mathbb{R} \times \mathbb{R}^{d} \to \mathbb{R}$ are smooth functions.
For thin plates, $\gamma$ behaves like $h^{2}$ as $h \to 0$ (cf. \cite[Equation (2.16), p. 13]{LaLi1988}) and is, therefore, neglected in some literature.
In Section \ref{SECTION_MODEL_DESCRIPTION} below,
we present a short physical deduction of Equations (\ref{EQUATION_INTRODUCTION_KIRCHHOFF_LOVE_PDE_1})--(\ref{EQUATION_INTRODUCTION_KIRCHHOFF_LOVE_IC}).

Lasiecka et al. \cite{LaMaSa2008} studied a quasilinear PDE system
similar to (\ref{EQUATION_INTRODUCTION_KIRCHHOFF_LOVE_PDE_1})--(\ref{EQUATION_INTRODUCTION_KIRCHHOFF_LOVE_IC})
in a smooth, bounded domain $\Omega$ of $\mathbb{R}^{d}$ with $d \leq 3$
given by a Kirchhoff \& Love plate with parabolic heat conduction
\begin{subequations}
\begin{align}
	w_{tt} + \triangle^{2} w - \triangle \theta + a\triangle \big((\triangle w)^{3}\big) &= 0 \text{ in } (0, T) \times \Omega, 
	\label{EQUATION_QUASILINEAR_THERMOELASTIC_PLATE_NO_ROTATIONAL_MOMENTUM_PDE_1} \\
	\theta_{t} - \triangle \theta + \triangle w_{t} &= 0 \text{ in } (0, T) \times \Omega
	\label{EQUATION_QUASILINEAR_THERMOELASTIC_PLATE_NO_ROTATIONAL_MOMENTUM_PDE_2}
\end{align}
\end{subequations}
together with boundary conditions (\ref{EQUATION_INTRODUCTION_KIRCHHOFF_LOVE_BC})
and initial conditions (\ref{EQUATION_INTRODUCTION_KIRCHHOFF_LOVE_IC})
for an arbitrary $T > 0$.
For the initial-boundary-value problem 
(\ref{EQUATION_QUASILINEAR_THERMOELASTIC_PLATE_NO_ROTATIONAL_MOMENTUM_PDE_1})--(\ref{EQUATION_QUASILINEAR_THERMOELASTIC_PLATE_NO_ROTATIONAL_MOMENTUM_PDE_2}),
(\ref{EQUATION_INTRODUCTION_KIRCHHOFF_LOVE_BC})--(\ref{EQUATION_INTRODUCTION_KIRCHHOFF_LOVE_IC}),
they proved the global existence of weak solutions $(w, \theta)$ and their uniform decay in the norm of
\begin{equation}
	\Big(W^{1, \infty}\big(0, T; L^{2}(\Omega)\big) \cap L^{\infty}(0, T; W^{2, 4}(\Omega)\big)\Big) \times L^{\infty}\big(0, T; W^{1,2}(\Omega)\big). \notag
\end{equation}
The existence proof was based on a Galerkin approximation and compactness theorems,
while the uniform stability was obtained with the aid of energy techniques.

In their monograph \cite{ChuLa2010}, Chueshov and Lasiecka give an extensive study on the von K\'{a}rm\'{a}n
plate system both in pure elastic and thermoelastic cases.
With $w \colon \Omega \to \mathbb{R}$ denoting the vertical displacement 
and $v \colon \Omega \to \mathbb{R}$ standing for the Airy stress function
of a plate with its mid-plane occupying in the reference configuration a domain $\Omega \subset \mathbb{R}^{2}$,
the pure elastic version of K\'{a}rm\'{a}n plate system reads as
\begin{subequations}
\begin{align}
	w_{tt} - \alpha \triangle w_{tt} + \triangle^{2} u - [u, v + F_{0}] + Lu &= p
	\text{ in } (0, \infty) \times \Omega, 
	\label{EQUATION_INTRODUCTION_VON_KARMAN_PDE_1} \\
	\triangle^{2} v + [u, u] &= 0
	\text{ in } (0, \infty) \times \Omega, 
	\label{EQUATION_INTRODUCTION_VON_KARMAN_PDE_2}
\end{align}
\end{subequations}
where $[v, w] := v_{x_{1} x_{1}} w_{x_{2} x_{2}} + v_{x_{2} x_{2}} w_{x_{1} x_{1}} - 2 v_{x_{1} x_{2}} w_{x_{1} x_{2}}$,
$L$ is a first-order differential operator and $F_{0}, p \colon \Omega \to \mathbb{R}$ are given ``force'' functions.
Imposing standard initial conditions, under various sets of boundary conditions,
Chueshov and Lasiecka proved Equations (\ref{EQUATION_INTRODUCTION_VON_KARMAN_PDE_1})--(\ref{EQUATION_INTRODUCTION_VON_KARMAN_PDE_2})
possess a unique generalized, weak or strong solution depending on the data regularity.
The proof was based on a nonlinear Galerkin-type approximation.
Further, they studied the semiflow associated with the solution to Equations (\ref{EQUATION_INTRODUCTION_VON_KARMAN_PDE_1})--(\ref{EQUATION_INTRODUCTION_VON_KARMAN_PDE_2}),
in particular, they analyzed its long-time behavior and the existence of attracting sets.
Various damping mechanisms, thermoelastic effects, structurally coupled systems
such as acoustic chambers or gas flow past a plate were studied.
An extremely detailed and comprehensive literature overview was also given.

Denk et al. \cite{DeRaShi2009} considered a linearization of (\ref{EQUATION_QUASILINEAR_THERMOELASTIC_PLATE_NO_ROTATIONAL_MOMENTUM_PDE_1})--(\ref{EQUATION_QUASILINEAR_THERMOELASTIC_PLATE_NO_ROTATIONAL_MOMENTUM_PDE_2}), 
which corresponds to letting $a \equiv 0$, in a bounded or exterior $C^{4}$-domain of $\mathbb{R}^{d}$ for $d \geq 2$ subject to the initial conditions from Equation (\ref{EQUATION_INTRODUCTION_KIRCHHOFF_LOVE_IC})
and the boundary conditions
\begin{equation}
	w = \partial_{\nu} w = \theta = 0 \text{ on } (0, T) \times \partial \Omega,
	\label{EQUATION_ELASTIC_CLAMPED_BOUNDARY_CONDITION_THERMAL_DIRICHLET_BOUNDARY_CONDITION}
\end{equation}
where $\partial_{\nu} = (\nabla \cdot)^{T} \nu$ and $\nu$ denotes the outer unit normal vector to $\Omega$ on $\partial \Omega$.
By proving a resolvent estimate both in the whole space and in the half-space and employing localization techniques,
they showed that the $C_{0}$-semigroup for $(w, w_{t}, \theta)$ on the space
\begin{equation}
	W^{2, p}_{D}(\Omega) \times L^{p}(\Omega) \times L^{p}(\Omega) \text{ with }
	W^{2, p}_{D}(\Omega) = \{u \in W^{2, p}(\Omega) \,|\, u = \partial_{\nu} u = 0 \text{ on } \partial \Omega\} \notag
\end{equation}
is analytic. In case $\Omega$ is bounded, they also proved an exponential stability result for the semigroup.

Lasiecka and Wilke \cite{LaWi2012} presented an $L^{p}$-space treatment of
Equations (\ref{EQUATION_QUASILINEAR_THERMOELASTIC_PLATE_NO_ROTATIONAL_MOMENTUM_PDE_1})--(\ref{EQUATION_QUASILINEAR_THERMOELASTIC_PLATE_NO_ROTATIONAL_MOMENTUM_PDE_2}),
(\ref{EQUATION_INTRODUCTION_KIRCHHOFF_LOVE_BC})--(\ref{EQUATION_INTRODUCTION_KIRCHHOFF_LOVE_IC})
in bounded $C^{2}$-domains $\Omega$ of $\mathbb{R}^{d}$.
By proving the maximal $L^{p}$-regularity for the linearized problem,
they adopted the classical approach to prove the existence and uniquess of strong solutions satisfying
\begin{equation}
	(\triangle w, w_{t}, \theta) \in
	\Big(L^{p}_{\mu}\big(0, T; W^{2, p}(\Omega)\big) \cap
	W^{1, p}_{\mu}\big(0, T; L^{p}(\Omega)\big) \cap
	BUC\big(0, T; W^{2\mu - 2/p, p}(\Omega)\big)\Big)^{3} \notag
\end{equation}
for $p > 1 + \tfrac{d}{2}$,
where $L^{p}_{\mu}(\Omega)$ is the space of strongly measurable functions $u$
for which $t \mapsto t^{1 - \mu} u(t)$ lies in $L^{p}(\Omega)$
and $W^{1, p}_{\mu}(\Omega)$ stands for the space of weakly differentiable functions from $L^{p}_{\mu}(\Omega)$
whose first-order weak derivatives also lie in $L^{p}_{\mu}(\Omega)$.
For $T \leq \infty$, they showed a global strong solvability result for sufficiently small initial data in the interpolation space
\begin{equation}
	(\triangle w^{0}, w^{1}, \theta^{0}) \in \Big(\big(L^{p}(\Omega), W^{2, p}(\Omega) \cap W^{1, p}_{0}(\Omega)\big)_{\mu - 1/p, p}\Big)^{3}. \notag
\end{equation}
They pointed out that similar arguments can be used to obtain a short-time existence for arbitrarily large initial data.
Finally, they studied the first- and higher-order differentiability as well as analyticity of solutions under appropriate assumptions on the data.

Recently, Denk and Schnaubelt \cite{DeSch2015} considered a structurally damped elastic plate equation
\begin{equation}
	w_{tt} + \triangle^{2} w - \rho \triangle w_{t} = f \text{ in } (0, \infty) \times \Omega
\end{equation}
in a domain $\Omega \subset \mathbb{R}^{d}$, being either the whole space, a half-space or a bounded $C^{4}$-domain,
subject to inhomogeneous Dirichlet-Neumann boundary conditions
\begin{equation}
	w = g_{0}, \quad \partial_{\nu} w = g_{1} \text{ on } (0, \infty) \times \Omega \notag
\end{equation}
and the initial conditions
\begin{equation}
	w(0, \cdot) = w^{0}, \quad w_{t}(0, \cdot) = w^{1} \text{ in } \Omega, \notag
\end{equation}
with the data coming from appropriate $L^{p}$-Sobolev spaces for $p \in (1, \infty) \backslash \{3/2, 3\}$.
By showing the $\mathcal{R}$-sectoriality of the operator driving the flow $t \mapsto \big(w(t), w_{t}(t)\big)$ both in the whole space and the half-space scenarios,
they proved the $L^{p}$-maximum regularity for the generator on any finite time horizon $T > 0$.
In case of bounded $C^{4}$-domains, a standard localization technique was adopted to
deduce the maximum $L^{p}$-regularity for any time horizon $T \in (0, \infty]$.

In the present article, we study the quasilinear PDE system
associated with Equations (\ref{EQUATION_INTRODUCTION_KIRCHHOFF_LOVE_PDE_1})--(\ref{EQUATION_INTRODUCTION_KIRCHHOFF_LOVE_IC}).
In contrast to earlier works, dealing with a quasilinear system without maximal $L^{p}$-regularity property,
it is technically beneficial to look for classical rather than weak or strong solutions.
The necessity of studying smooth solutions results in a much higher complexity of the existence and uniqueness proof
as it has to be carried out at a higher energy level,
which, in turn, is based on a Kato-type approximation procedure rather than a Galerkin scheme.
The paper is structured as follows. After a short introduction Section \ref{SECTION_INTRODUCTION},
we present in Section \ref{SECTION_MODEL_DESCRIPTION} a brief physical deduction of the Kirchhoff \& Love plate from Equations
(\ref{EQUATION_INTRODUCTION_KIRCHHOFF_LOVE_PDE_1})--(\ref{EQUATION_INTRODUCTION_KIRCHHOFF_LOVE_IC}).
In Section \ref{SECTION_EXISTENCE_AND_UNIQUENESS},
an existence and uniquess result for Equations
(\ref{EQUATION_INTRODUCTION_KIRCHHOFF_LOVE_PDE_1})--(\ref{EQUATION_INTRODUCTION_KIRCHHOFF_LOVE_IC})
in the class of classical solutions is shown.
The long-time behavior of Equations (\ref{EQUATION_INTRODUCTION_KIRCHHOFF_LOVE_PDE_1})--(\ref{EQUATION_INTRODUCTION_KIRCHHOFF_LOVE_IC})
is studied in Section \ref{SECTION_LONG_TIME_BEHAVIOR}.
Under a smallness assumption on the initial data, the global existence and uniquess of solution is proved using energy estimates and the barrier method.
Further, this global solution is shown to decay at an exponential rate to the zero equilibrium state.
Finally, in the appendix Section \ref{APPENDIX},
we present a well-posedness theory along with higher energy estimates
for a linear wave equation with time- and space-dependent coefficients
as well as the homogeneus isotropic heat equation.

\section{Model Description $(d=2)$} \label{SECTION_MODEL_DESCRIPTION}
Consider a prismatic solid plate of uniform thickness $h > 0$ and constant material density $\rho > 0$
occupying in a reference configuration the domain $\mathcal{B}_{h} := \Omega \times (-\tfrac{h}{2}, \tfrac{h}{2})$ of $\mathbb{R}^{3}$,
where $\Omega \subset \mathbb{R}^{2}$ is bounded.
The underlying material is assumed to be elastically and thermally isotropic.
Further, we restrict ourselves to the case of infinitesimal thermoelasticity
with both stresses/strains and temperature gradient/heat flux being small.
Additionally, we assume the strains linearly decompose into elastic and thermal ones.
Despite of these linearity assumptions, a nonlinear (hypo)elastic law will be postulated 
allowing for materials with genuinely nonlinear response such as rubber, liquid crystal elastomers, etc.
Figure \ref{FIGURE_PLATE} below (adopted from \cite[Chapter 1]{Po2011})
displays a prismatic plate together with its mid-plane in the reference configuration.
\begin{figure}[!h]
	\begin{centering}
		\setlength{\unitlength}{0.8mm}
		\begin{picture}(90, 60)(0,0)
			\linethickness{1pt}
			\put(0,-2){\includegraphics[angle=0, scale=0.8]{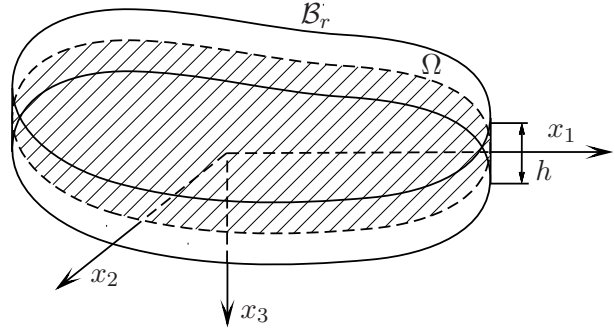}}
			\put(50, 50){$\mathcal{B}_{r}$}
			\put(70, 41.5){$\Omega$}
			\put(89, 24){$h$}
			\put(15,  7){$x_{2}$}
			\put(91, 31){$x_{1}$}
			\put(40,  1){$x_{3}$}
		\end{picture}
		\caption{Prismatic plate \label{FIGURE_PLATE}} 
	\end{centering}
\end{figure}

We start by interpreting the plate as a 3D body.
Let $\mathbf{U} = (U_{1}, U_{2}, U_{3})^{T}$ be the displacement vector in Lagrangian coordinates, $T$ stand for the absolute temperature
and $\mathbf{q} = (q_{1}, q_{2}, q_{3})^{T}$ be the associated heat flux.
Denote by $T_{0} > 0$ a reference temperature for which the body occupies the reference configuration and is free of any stresses or strains.
Further, let $S$ denote the entropy and 
\begin{equation}
	\boldsymbol{\sigma} = (\sigma_{ij})_{i = 1, 2, 3}^{j = 1, 2, 3}
	\mbox{\quad and \quad}
	\boldsymbol{\varepsilon} = \tfrac{1}{2}\big(\nabla \mathbf{U} + (\nabla \mathbf{U})^{T}\big) \notag
\end{equation}
stand for the first Piola \& Kirchhoff stress tensor and the infinitesimal Cauchy strain tensor.
We assume the total stress tensor decomposes into elastic and thermal stresses according to
\begin{equation}
	\boldsymbol{\sigma} = \boldsymbol{\sigma}^{\mathrm{elast}} - \boldsymbol{\sigma}^{\mathrm{therm}}.
	\label{EQUATION_LINEAR_STRESS_DECOMPOSITION}
\end{equation}
In the absense of external body forces and heat sources,
the momentum and energy balance equations  (cf. \cite[p. 142]{AmBeMi1984} and \cite[Chapter 1]{LaLi1988}) read then as
\begin{subequations}
\begin{align}
	\rho \mathbf{U_{tt}} + \operatorname{div} \boldsymbol{\sigma} &= 0 \text{ in } (0, \infty) \times \mathcal{B}_{h}, 
	\label{EQUATION_BALANCE_OF_MOMENTUM_3D_BASIC} \\
	T S_{t} + \operatorname{div} \mathbf{q} &= 0 \text{ in } (0, \infty) \times \mathcal{B}_{h}.
	\label{EQUATION_BALANACE_OF_THERMAL_ENERGY_3D_BASIC}
\end{align}
\end{subequations}

Similar to Ilyushin \cite[p. 42]{Il2004},
we define the elastic strain intensity $\varepsilon_{\mathrm{int}}$
as a properly scaled second invariant of the elastic strain deviator tensor by means of
\begin{equation}
	\varepsilon_{\mathrm{int}}^{\mathrm{elast}} 
	= \tfrac{\sqrt{2}}{3} \Big((\operatorname{tr} \boldsymbol{\varepsilon}^{\mathrm{elast}})^{2} 
	- \operatorname{tr}\big((\boldsymbol{\varepsilon}^{\mathrm{elast}})^{2}\big)\Big). \notag
\end{equation}
Similarly, we can define the elastic stress intensity via
\begin{equation}
	\sigma_{\mathrm{int}}^{\mathrm{elast}} 
	= \tfrac{\sqrt{2}}{3} \Big((\operatorname{tr} \boldsymbol{\sigma}^{\mathrm{elast}})^{2} 
	- \operatorname{tr}\big((\boldsymbol{\sigma}^{\mathrm{elast}})^{2}\big)\Big). \notag
\end{equation}
Within the classical hypoelasticity, we need to postulate a relation between these two quantities.
Here, we consider a general material law given by
\begin{equation}
	\sigma_{\mathrm{int}} = \kappa(\varepsilon_{\mathrm{int}}),
	\label{EQUATION_HYPOELASTIC_LAW_ELASTIC_PART}
\end{equation}
which generalizes power-law-type materials considered by Ambartsumian et al. in \cite[Equation (6)]{AmBeMi1984}.
For the thermal stresses and strains, we select a linear material law
\begin{equation}
	\boldsymbol{\sigma}^{\mathrm{therm}} = \tfrac{E}{1 - 2\mu} \boldsymbol{\varepsilon}^{\mathrm{therm}},
	\label{EQUATION_HYPOELASTIC_LAW_THERMAL_PART}
\end{equation}
where $E$ and $\mu$ play the role of Young's modulus and Poisson's ratio and can be reconstructed from the Hooke's law resulting
from linearizing Equation (\ref{EQUATION_HYPOELASTIC_LAW_ELASTIC_PART}) around zero.

With $\tau = T - T_{0}$ denoting the relative temperature, the thermal linearity and isotropy assumptions imply
\begin{equation}
	\boldsymbol{\varepsilon}^{\mathrm{therm}} = \alpha \tau \mathbf{I}_{3 \times 3},
	\label{EQUATION_THERMAL_STRAIN_MATERIAL_LAW}
\end{equation}
where $\alpha > 0$ is the thermal expansion coefficient (cf. \cite[p. 29]{LaLi1988}).
According to Nowicki \cite[Chapter 1]{No1986},
a linear approximation for the entropy reads as
\begin{equation}
	S = \gamma \operatorname{tr}\big(\varepsilon^{\mathrm{elast}}\big) + \tfrac{\rho c}{T_{0}} \tau,
	\label{EQUATION_ENTROPY_LINEARIZED}
\end{equation}
where $c > 0$ is the heat capacity and $\gamma = \frac{E \alpha}{1 - 2\mu}$.
Plugging Equations (\ref{EQUATION_LINEAR_STRESS_DECOMPOSITION}), 
(\ref{EQUATION_HYPOELASTIC_LAW_THERMAL_PART}), (\ref{EQUATION_THERMAL_STRAIN_MATERIAL_LAW}) and (\ref{EQUATION_ENTROPY_LINEARIZED})
into Equations (\ref{EQUATION_BALANCE_OF_MOMENTUM_3D_BASIC})--(\ref{EQUATION_BALANACE_OF_THERMAL_ENERGY_3D_BASIC})
and linearizing with respect to $\tau$ around zero, we get
\begin{subequations}
\begin{align}
	\rho \mathbf{U}_{tt} + \operatorname{div} \boldsymbol{\sigma}^{\mathrm{elastic}} + \gamma \nabla \tau
	&= 0 \text{ in } (0, \infty) \times \mathcal{B}_{h}, 
	\label{EQUATION_BALANCE_OF_MOMENTUM_3D_TRANSFORMED} \\
	\rho c \tau_{t} - \lambda_{0} \triangle \tau + \gamma T_{0} \operatorname{tr}\big(\boldsymbol{\varepsilon}^{\mathrm{elast}}_{t}\big)
	&= 0 \text{ in } (0, \infty) \times \mathcal{B}_{h}.
	\label{EQUATION_BALANACE_OF_THERMAL_ENERGY_3D_TRANSFORMED}
\end{align}
\end{subequations}
Together with Equation (\ref{EQUATION_HYPOELASTIC_LAW_ELASTIC_PART}),
Equations (\ref{EQUATION_BALANCE_OF_MOMENTUM_3D_TRANSFORMED})--(\ref{EQUATION_BALANACE_OF_THERMAL_ENERGY_3D_TRANSFORMED})
constitute the PDE system of 3D thermoelasticity.
In the following, we exploit these equations to deduce our thermoelastic plate model.

As it is typical for most plate theories,
we postulate the hypothesis of undeformable normals,
i.e., the linear filaments being perpendicular to the mid-plane before deformation
should also remain linear after the deformation.
Since we are interested in obtaining a Kirchhoff \& Love-type plate model,
we additionally assume these deformed filaments remain perpendicular to the deformed mid-plane.
The in-plane displacements are assumed negligible.
Mathematically, these structural assumptions can be written as
\begin{equation}
	\begin{split}
		U_{1}(x_{1}, x_{2}, x_{3}) &= -x_{3} w_{x_{1}}(x_{1}, x_{2}), \quad
		U_{2}(x_{1}, x_{2}, x_{3})  = -x_{3} w_{x_{2}}(x_{1}, x_{2}), \\
		U_{3}(x_{1}, x_{2}, x_{3}) &= \phantom{-x_{3}}w(x_{1}, x_{2}),
	\end{split}
	\label{EQUATION_KIRCHHOFF_LOVE_STRUCTURAL_ASSUMPTIONS}
\end{equation}
where $w$ is referred to as the bending component or the vertical displacement.
Thus, the elastic behavior of our plate can fully be described merely by $w$.
Figure \ref{PLATE_DEFORMATION} is self-describing and illustrates these structural assumptions.
\begin{figure}[!h]
	\begin{centering}
		\setlength{\unitlength}{0.7143mm} 
		\begin{picture}(150, 100)(0, 0)
			\linethickness{1pt} 
			\put(0, -5){\includegraphics[angle=0, scale=0.5]{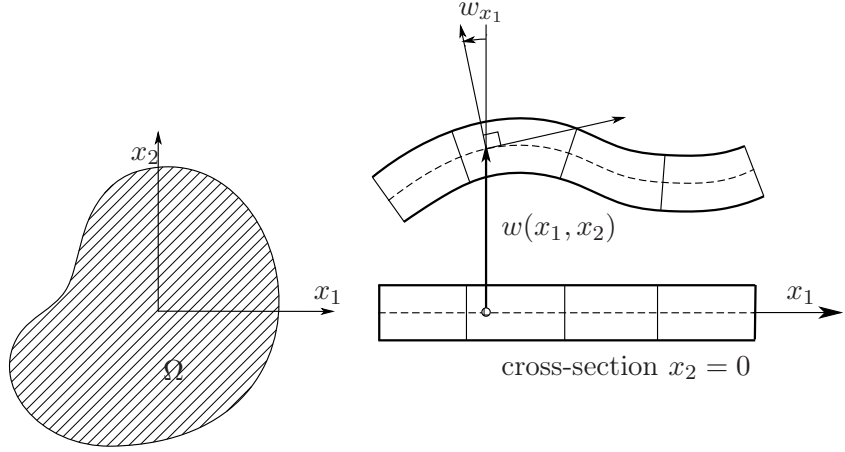}}
			\put(30, 18){$\Omega$}
			\put(58, 33){$x_{1}$}
			\put(24, 59){$x_{2}$}
			
			\put(146, 33){$x_{1}$}
			\put(93, 45){$w(x_{1}, x_{2})$}
			\put(93, 19){cross-section $x_{2} = 0$}
			\put(85, 86){$w_{x_{1}}$}
		\end{picture}
		\vspace{-0.4cm}
		
		\caption{Mid-plane of a plate as well as 
		plate cross-sections $x_{2} = 0$ before and after the deformation
		\label{PLATE_DEFORMATION}} 
	\end{centering}
\end{figure}

As for the thermal part of the system,
a properly weighted momentum of the relative temperature $\tau$ with respect to $x_{3}$ given by
\begin{equation}
	\theta(x_{1}, x_{2}) = \frac{12 \alpha}{h^{3}} \int_{-h/2}^{h/2} x_{3} \tau(x_{1}, x_{2}, x_{3}) \mathrm{d}x_{3} \notag
\end{equation}
will play a crucial role.
Proceeding as Lagnese and Lions \cite[pp. 29--31]{LaLi1988},
Equation (\ref{EQUATION_BALANACE_OF_THERMAL_ENERGY_3D_TRANSFORMED}) can be reduced to
\begin{equation}
	\rho c \theta_{t} - \lambda_{0} \triangle \theta
	+ \tfrac{12 \lambda_{0}}{\rho c h^{2}} \big(\tfrac{h \lambda_{1}}{2} + 1\big) \theta
	+ \tfrac{\alpha \gamma}{\rho c} \triangle w_{t} = 0
	\text{ in } (0, \infty) \times \Omega, \notag
\end{equation}
where $\lambda_{1} \geq 0$ is the parameter from the Newton's cooling law applied to the lower and upper faces of the plate.

Returning to the elastic part and assuming for a moment the material response $\kappa(\cdot)$ 
from Equation (\ref{EQUATION_HYPOELASTIC_LAW_ELASTIC_PART}) is an analytic function
possessing a Taylor expansion with the vanishing constant term
\begin{equation}
	\kappa(s) = \sum_{m = 1} a_{m} s^{m} \text{ for some } a_{m} \in \mathbb{R}, \notag
\end{equation}
we combine the approaches of Ambartsumian et al. \cite{AmBeMi1984} and Lagnese \& Lions \cite[Chapter 1]{LaLi1988} to deduce
\begin{equation}
	\rho h w_{tt} - \tfrac{\rho h^{3}}{12} \triangle w_{tt} + \triangle K(\triangle w) 
	+ D \tfrac{1 + \mu}{2} \triangle \theta = 0 \text{ in } (0, \infty) \times \Omega, \notag
\end{equation}
where $D = \frac{E h^{3}}{12(1 - \mu^{2})}$ denotes the flexural rigidity
and $K(\cdot)$ is obtained from $\kappa(\cdot)$ by means of
\begin{equation}
	K(s) = \sum_{n = 1}^{\infty} \big(\tfrac{2}{\sqrt{3}}\big)^{n + 1} \tfrac{h^{n + 1}}{n + 2} a_{n} s^{n}. \notag
\end{equation}
In contrast to \cite{AmBeMi1984},
the $\triangle w_{tt}$-term is not neglected here allowing for an adequate description of thicker plates
than those accounted for by the standard theory.
Obviously, $K(\cdot)$ is also analytic and its Taylor series has the same absolute convergence region as $\kappa(\cdot)$.
Taking into account
\begin{align*}
	K(s) &= \sum_{n = 1}^{\infty} \big(\tfrac{2}{\sqrt{3}}\big)^{n + 1} \tfrac{h^{n + 2}}{n + 2} a_{n} s^{n - 1}
	= s^{-3} \sum_{n = 1}^{\infty} \big(\tfrac{2}{\sqrt{3}}\big)^{n + 1} \tfrac{h^{n + 2}}{n + 2} a_{n} s^{n + 2} \\
	&= s^{-3} \sum_{m = 1}^{\infty} \big(\tfrac{2}{\sqrt{3}}\big)^{-1}
	\tfrac{\big(\tfrac{2}{\sqrt{3}} h s\big)^{n + 2}}{n + 2} a_{n} s^{n}
	= h^{3} \big(\tfrac{2}{\sqrt{3}} h s\big)^{-3} \big(\tfrac{2}{\sqrt{3}}\big)^{2} 
	\sum_{n = 1}^{\infty} a_{n} \tfrac{\big(\tfrac{2}{\sqrt{3}} h s\big)^{n + 2}}{n + 2} \\
	&= \tfrac{4h^{3}}{3} \big(\tfrac{2}{\sqrt{3}} h s\big)^{-3} \sum_{n = 1}^{\infty} a_{n} \tfrac{\big(\tfrac{2}{\sqrt{3}} h s\big)^{n + 2}}{n + 2}
	= \tfrac{4h^{3}}{3} \sum_{n = 1}^{\infty} a_{n} \big[I (\cdot)^{n}\big]\big(\tfrac{2}{\sqrt{3}} hs\big)
\end{align*}
with the linear operator
\begin{equation}
	\big(I f\big)(s) = s^{-3} \int_{0}^{s} \xi f(\xi) \mathrm{d}\xi \text{ for } s \in \mathbb{R} \backslash \{0\}, \notag
\end{equation}
the function $K$ can equivalently be written as
\begin{equation}
	K(s) = \tfrac{4}{3} h^{3} [I \kappa]\big(\tfrac{2}{\sqrt{3}} h s\big) 
	\text{ for } s \in \mathbb{R} \backslash \{0\}. \notag
\end{equation}
By density and continuity, $I$ can uniquely be extended to a mapping
from the set of continuous functions differentiable and vanishing in 0 with the following norm being bounded
\begin{equation}
	\|f\| = \max\big\{\sup_{x \in \mathbb{R}} |f(x)|, |f'(0)|\big\} \notag
\end{equation}
into the space of continuous functions vanishing at $0$.

Summarizing, our thermoelastic plate system reads as
\begin{subequations}
\begin{align}
	\rho h w_{tt} - \tfrac{\rho h^{3}}{12} \triangle w_{tt} + \triangle K(\triangle w) 
	+ D \tfrac{1 + \mu}{2} \triangle \theta &= 0 \text{ in } (0, \infty) \times \Omega, \\
	\rho c \partial_{t} \theta - \lambda_{0} \triangle \theta
	+ \tfrac{12 \lambda_{0}}{\rho c h^{2}} \big(\tfrac{h \lambda_{1}}{2} + 1\big) \theta
	+ \tfrac{\alpha \gamma \lambda_{0}}{\rho c} \triangle w_{t} &= 0
	\text{ in } (0, \infty) \times \Omega.
\end{align}
\end{subequations}
Various boundary conditions can be adopted.
We refer to \cite[Chapter 2]{Am1970}, \cite[Chapter 4]{Il2004} and \cite[Chapter 1]{LaLi1988} for further details.
Here, we consider a simply supported plate held at the reference temperature at the boundary:
\begin{equation}
	w = \triangle w = \theta = 0 \text{ in } (0, \infty) \times \partial \Omega. \notag
\end{equation}

\section{The Main Results} \label{SECTION_MAIN_RESULTS}
In this section, we state the main results on the well-posedness and long-time behavior of Equations (\ref{EQUATION_INTRODUCTION_KIRCHHOFF_LOVE_PDE_1})--(\ref{EQUATION_INTRODUCTION_KIRCHHOFF_LOVE_IC}). 
While the local result assumes smoothness of the boundary of the domain, regularity of the initial data and nonlinearities as well as certain compatibility conditions, 
the global results rely additionally and critically on some further smallness assumption on the initial data.
Recall $\Omega \subset \BR^d$ ($d = 2$ or $3$) is a bounded domain throughout this paper.

\begin{definition}
	\label{w_DEFINITION_CLASSICAL_SOLUTION}
	Let $s \geq 2$. Under a classical solution
	to Equations (\ref{EQUATION_INTRODUCTION_KIRCHHOFF_LOVE_PDE_1})--(\ref{EQUATION_INTRODUCTION_KIRCHHOFF_LOVE_IC}) on $[0, T]$ at the energy level $s$,
	we understand a function pair $(w, \theta) \colon [0, T] \times \bar{\Omega} \to \mathbb{R} \times \mathbb{R}$ satisfying
	\begin{align*}
		w      &\in \Big(\bigcap_{m = 0}^{s - 1} C^{m}\big([0, T], H^{s +2 - m}(\Omega) \cap H^{1}_{0}(\Omega)\big)\Big) \cap
		C^{s}\big([0, T], H^{2}(\Omega) \cap H^{1}_{0}(\Omega)\big), \\
		\theta &\in \Big(\bigcap_{k = 0}^{s - 2} C^{k}\big([0, T], H^{s + 1 - k}(\Omega) \cap H^{1}_{0}(\Omega)\big)\Big) \cap 
		C^{s - 1}\big([0, T], H^{1}_{0}(\Omega)\big)
	\end{align*}
	and, being plugged into Equations (\ref{EQUATION_INTRODUCTION_KIRCHHOFF_LOVE_PDE_1})--(\ref{EQUATION_INTRODUCTION_KIRCHHOFF_LOVE_IC}), turns them into tautology.
	Classical solutions on $[0, T)$ and $[0, \infty)$ are defined correspondingly. 
\end{definition}

\begin{definition} \label{w_definition_classic_solution}
	Let $w^{m}$, $m \geq 2$, and $\theta^{k}$, $k \geq 1$, denote the ``initial values'' 
	for $\partial_{t}^{m} w$ and $\partial_{t}^{k} \theta$
	formally and recursively computed in terms of $w^{0}, w^{1}$ and $\theta^{0}$
	based on Equations (\ref{EQUATION_NONLINEAR_PLATE_EQUATION_PDE_1})--(\ref{EQUATION_NONLINEAR_PLATE_EQUATION_IC}) (cf. \cite[p. 96]{JiaRa2000})
\end{definition}

\noindent
To proceed with our well-posedness result, we require the following assumptions.
\begin{assumption}
	\label{w_ASSUMPTION_LOCAL_EXISTENCE}
	Let $s \geq 3$ be an integer and let $\partial \Omega \in C^{s}$.
	\begin{enumerate}
		\item Let $a \in C^{s - 1}(\mathbb{R}, \mathbb{R})$.

		\item Let $f \in C^{s - 1}(\mathbb{R} \times \mathbb{R}^{2}, \mathbb{R})$.

		\item Let the initial data satisfy the regularity
		\begin{equation}
			w^{0}, \triangle w^{0} \in H^{s}(\Omega) \cap H^{1}_{0}(\Omega), \quad w^{1}, \triangle w^{1} \in H^{s-1}(\Omega) \cap H^{1}_{0}(\Omega), \quad \theta^{0} \in H^{s + 1}(\Omega) \cap H^{1}_{0}(\Omega) \notag
		\end{equation}
		as well as compatibility conditions
		\begin{equation}
			\begin{split}
				w^{m}, \triangle w^{m} &\in H^{s - m}(\Omega) \cap H^{1}_{0}(\Omega) \text{ for } m = 2, \dots, s - 1 \text{ and } w^{s} \in H^{2}(\Omega) \cap H^{1}_{0}(\Omega), \\
				\theta^{k} &\in H^{s + 1 - k}(\Omega) \cap H^{1}_{0}(\Omega) \text{ for } k = 1, \dots, s - 2 \text{ and } \theta^{s - 1} \in H^{1}_{0}(\Omega).
			\end{split}
			\notag
		\end{equation}
		
		\item Further, assume the ``initial ellipticity'' condition for $a(\triangle w^{0}) A$, i.e.,
		\begin{equation}
			\min_{x \in \bar{\Omega}} a\big(\triangle w^{0}(x)\big) > 0, \quad
			\text{where } \triangle w^{0} \in C^{0}(\bar{\Omega}) \text{ by virtue of Sobolev's imbedding theorem.} \notag
		\end{equation}
	\end{enumerate}
\end{assumption}

\noindent
Now, we can formulate our local well-posedness result.
\begin{theorem}[Local Well-Posedness]
	\label{w_THEOREM_LOCAL_EXISTENCE}
	
	If Assumption \ref{ASSUMPTION_LOCAL_EXISTENCE} is satisfied for some $s \geq 3$,
	Equations (\ref{EQUATION_INTRODUCTION_KIRCHHOFF_LOVE_PDE_1})--(\ref{EQUATION_INTRODUCTION_KIRCHHOFF_LOVE_IC})
	possess a unique classical solution $(w, \theta)$ at the energy level $s$ on a maximal interval $[0, T_{\mathrm{max}}) \neq \emptyset$ additionally satisfying
	\begin{equation}
		\partial_{t}^{s - 1} \theta \in L^{2}\big(0, T; H^{2}(\Omega) \cap H^{1}_{0}(\Omega)\big) \text{ and }
		\partial_{t}^{s} \theta \in L^{2}\big(0, T; L^{2}(\Omega)\big) \notag
	\end{equation}
	along with
	\begin{equation}
		\min\limits_{x \in \bar{\Omega}} a\big(\triangle w(t, x)\big) > 0 \text{ for any } t \in [0, T_{\mathrm{max}}). \notag
	\end{equation}
	Unless $T_{\mathrm{max}} = \infty$, we have $\min\limits_{x \in \bar{\Omega}} a\big(\triangle w(t, x)\big) \to 0$ as $\nearrow T_{\mathrm{max}}$
	or/and
	\begin{equation}
		\sum_{k = 0}^{s} \big\|\partial_{t}^{k} w(t, \cdot)\big\|_{H^{s +2- k}(\Omega)}^{2} +
		\sum_{k = 0}^{s-2} \big\|\partial_{t}^{k} \theta (t, \cdot)\big\|_{H^{s + 1 - k}(\Omega)}^{2} +
		\big\|\partial_{t}^{s-1} \theta(t, \cdot)\big\|_{H^{1}(\Omega)}^{2} \to \infty
		\text{ as } t\nearrow T_{\mathrm{max}}.
		\notag
	\end{equation}
\end{theorem}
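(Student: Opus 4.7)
The plan is to produce the classical solution by a Kato-type iteration at the top energy level $s$, where at each step the quasilinear coefficient $a(-\triangle w)$ is frozen and the resulting linear hyperbolic-parabolic pair is solved using the appendix. Concretely, let $w^{(0)}$ and $\theta^{(0)}$ be the $t$-Taylor polynomials of orders $s$ and $s-1$ constructed from the formal initial data $w^{0},\dots,w^{s}$ and $\theta^{0},\dots,\theta^{s-1}$ of Definition \ref{w_definition_classic_solution}; given $(w^{(n)},\theta^{(n)})$ I define $(w^{(n+1)},\theta^{(n+1)})$ as the unique solution of
\begin{align*}
  w^{(n+1)}_{tt} - \gamma \triangle w^{(n+1)}_{tt} + a(-\triangle w^{(n)})\,\triangle^{2} w^{(n+1)}
    &= f(-\triangle w^{(n)}, -\nabla \triangle w^{(n)}) - \alpha\triangle \theta^{(n)}, \\
  \beta \theta^{(n+1)}_{t} - \eta \triangle \theta^{(n+1)} + \sigma \theta^{(n+1)}
    &= \alpha \triangle w^{(n)}_{t},
\end{align*}
subject to the boundary/initial data of Equations (\ref{EQUATION_INTRODUCTION_KIRCHHOFF_LOVE_BC})--(\ref{EQUATION_INTRODUCTION_KIRCHHOFF_LOVE_IC}). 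After applying $-\triangle$ (which commutes with $1-\gamma\triangle$ and with the hinged boundary conditions) the first line becomes a linear wave equation with time-and-space-dependent principal coefficient $a(-\triangle w^{(n)})$, and the appendix provides its well-posedness together with higher-order energy estimates; the second line is the plain linear heat equation covered there.

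Next I close a high-order a priori bound on the iterates. Set
\[
  E_{s}^{(n+1)}(t) := \sum_{k=0}^{s}\big\|\partial_{t}^{k} w^{(n+1)}(t)\big\|_{H^{s+2-k}}^{2} + \sum_{k=0}^{s-2}\big\|\partial_{t}^{k}\theta^{(n+1)}(t)\big\|_{H^{s+1-k}}^{2} + \big\|\partial_{t}^{s-1}\theta^{(n+1)}(t)\big\|_{H^{1}}^{2},
\]
augmented by the parabolic dissipation $\int_{0}^{t}\bigl(\|\partial_{t}^{s-1}\theta^{(n+1)}\|_{H^{2}}^{2} + \|\partial_{t}^{s}\theta^{(n+1)}\|_{L^{2}}^{2}\bigr)\,\dtau$. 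Differentiating the wave equation $k$ times in $t$ and pairing with $\partial_{t}^{k+1}(-\triangle w^{(n+1)})$, then doing the analogous thing for the heat equation with $\partial_{t}^{k}\theta^{(n+1)}$ and with $-\triangle\partial_{t}^{s-1}\theta^{(n+1)}$ at the top level, yields (after summing $k=0,\ldots,s$) the standard estimate from the appendix with forcing terms that involve products of derivatives of $a(-\triangle w^{(n)})$, $f$, and of $w^{(n+1)}$, $\theta^{(n)}$. These nonlinear contributions are controlled by Moser/Kato--Ponce commutator inequalities in terms of $E_{s}^{(n)}$ and $E_{s}^{(n+1)}$, and the quasi-dissipative cross-coupling between $\alpha\triangle\theta$ and $\alpha\triangle w_{t}$ cancels to top order. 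Gronwall's lemma, together with the observation that $\triangle w^{(n)}\in C([0,T];C^{0}(\bar\Omega))$ for $s\ge 3$ by Sobolev embedding, so that the ellipticity $a(-\triangle w^{(n)})\ge a_{0}/2$ persists on a short interval $[0,T^{*}]$, produces a radius $R$ such that the map $(w^{(n)},\theta^{(n)})\mapsto(w^{(n+1)},\theta^{(n+1)})$ preserves the ball $\{E_{s}(\cdot)\le R\text{ on }[0,T^{*}]\}$.

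To extract convergence I run the same estimate on the differences $\delta w^{(n)}:=w^{(n+1)}-w^{(n)}$, $\delta\theta^{(n)}:=\theta^{(n+1)}-\theta^{(n)}$, which solve the linearized system with sources $\bigl(a(-\triangle w^{(n)})-a(-\triangle w^{(n-1)})\bigr)\triangle^{2}w^{(n)}$, a mean-value-theorem remainder from $f$, and $\triangle\delta\theta^{(n-1)}$ or $\triangle\delta w^{(n-1)}_{t}$. Since the differences of the coefficients cost one extra derivative, the contraction is closed one level below the top, i.e.\ in the norm $E_{s-1}$; using the uniform $E_{s}$-bound to absorb the products, shrinking $T^{*}$ if necessary, I obtain $\|(\delta w^{(n)},\delta\theta^{(n)})\|_{E_{s-1}}\le\tfrac{1}{2}\|(\delta w^{(n-1)},\delta\theta^{(n-1)})\|_{E_{s-1}}$. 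Strong convergence in $E_{s-1}$ together with the uniform $E_{s}$-bound give, by weak$^{*}$ compactness and interpolation, a limit $(w,\theta)$ in the regularity class of Definition \ref{w_DEFINITION_CLASSICAL_SOLUTION}; the equations, boundary conditions and initial conditions pass to the limit by strong $C([0,T^{*}];H^{s-1})$-convergence and trace continuity. Uniqueness comes from running the contraction estimate on two solutions with identical data.

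Finally, the maximal interval and blow-up dichotomy follow by a standard continuation argument: if on $[0,T_{\max})$ both the lower ellipticity bound $\min_{\bar\Omega}a(-\triangle w(t,\cdot))$ stayed away from $0$ and $E_{s}(t)$ stayed bounded, then the a priori estimate of the second paragraph would give a uniform local lifespan $T^{*}>0$ at every base point, so the local theorem could be restarted near $T_{\max}$ to continue the solution past it, contradicting maximality. The \emph{principal technical difficulty} is the second paragraph: the quasilinear term $a(-\triangle w^{(n)})\,\triangle^{2}w^{(n+1)}$ under $\partial_{t}^{s}$ produces a leading symmetric piece (which gives the energy) together with a long sum of products whose spatial regularity must be handled by Kato--Ponce and Moser-type inequalities without ever exceeding the $H^{s+2-k}$-budget of $E_{s}$; the analogous control of $f(-\triangle w^{(n)},-\nabla\triangle w^{(n)})$ at order $s$ and the delicate additive combination of the hyperbolic energy with the parabolic dissipation across the $\alpha\triangle\theta/\alpha\triangle w_{t}$ coupling are where the bookkeeping is tightest.
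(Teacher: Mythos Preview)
Your outline is sound and would yield the theorem, but the paper organizes the argument differently in several places.

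First, the paper begins with the substitution $z := -\triangle w$, which turns the fourth-order plate equation into a second-order quasilinear wave equation for $z$; it then applies the resolvent identity $(A^{-1}+\gamma)^{-1} = \tfrac{1}{\gamma} - \tfrac{1}{\gamma}K$ with $K = A^{-1}(A^{-1}+\gamma)^{-1}$ smoothing by two orders, so that the equation becomes $z_{tt} + \tfrac{1}{\gamma}a(z)Az = \text{(lower order)}$. You achieve the same effect by applying $-\triangle$ and invoking the hinged boundary conditions, but the paper's route gives a cleaner fit to the linear wave-equation appendix with scalar principal coefficient $\bar{a}_{ij}=\tfrac{1}{\gamma}\hat a(\bar z)\delta_{ij}$.

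Second, rather than relying on persistence of the ellipticity $a(-\triangle w^{(n)})\ge a_0/2$ on a short interval, the paper globally modifies $a$ to a function $\hat a$ that coincides with $a$ on a neighborhood $J$ of $z^0(\bar\Omega)$ and is uniformly positive everywhere; it solves the modified problem by a Banach fixed point and only afterwards recovers the original equation by restricting to the time interval on which $z(t,\cdot)$ stays inside $J$. The blow-up alternative then comes from exhausting the admissible sets $J$.

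Third, the paper's fixed-point map is sequential rather than fully lagged: given $(\bar z,\bar\theta)$ it first solves the heat equation for $\theta$ with right-hand side $-\tfrac{1}{\beta}(\alpha\bar z_t+\sigma\bar\theta)$, and then solves the wave equation for $z$ with a right-hand side that already uses the \emph{new} $A\theta$. The contraction is then established not at level $E_{s-1}$ but in the much coarser metric $\rho$ built from $\|\bar D^{1}\tilde z\|_{L^2}$ and $\|\tilde\theta\|_{H^1}$, using the uniform $X(N,T)$ bound to supply $L^\infty$ control of all coefficients. This avoids the commutator bookkeeping you flag as the principal difficulty.

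One small correction: in your scheme the cross-coupling terms $\alpha\triangle\theta^{(n)}$ and $\alpha\triangle w^{(n)}_t$ are lagged and therefore do \emph{not} cancel; they simply enter as forcing terms and are absorbed by Gronwall. That is harmless for local existence, but the cancellation you mention does not occur in the iteration.
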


Let $\|(w, \theta)\|_{\calZ_s \times \calT_s} \equiv \|(\partial_{t}^{\leq s} w, \partial_{t}^{\leq s - 1} \theta)\|_{\calZ_s \times \calT_s}$ 
with $\partial_{t}^{\leq k} = (1, \partial_{t}, \dots, \partial_{t}^{k})$ denote the standard norm associated with the solution space in Definition \ref{w_DEFINITION_CLASSICAL_SOLUTION} 
(a precise definition is given in Equation \eqref{s_spaces}). We now present our global results:
\begin{theorem}[Global Well-Posedness] \label{w_global_wellposedness}
	\label{w_global_wellposedness_proof}
	Let Assumption \ref{w_ASSUMPTION_LOCAL_EXISTENCE} be satisfied for some $s \geq 3$.
	Then, there exists a positive number $\epsilon$ (defined in Theorem \ref{global_wellposedness_proof} of Section \ref{SECTION_LONG_TIME_BEHAVIOR}) 
	such that for any initial data $(w^0, w^1, \theta^0)$ satisfying 
	$\|(w^0,w^1,\theta^0)\|_{\calZ_s \times \calT_s} \equiv \|(w^0, w^1, \dots, w^{s}, \theta^0, \dots, \theta^{s - 1})\|_{\calZ_s \times \calT_s} < \epsilon$
	(which roughly means the smallness of $\|w^{0}\|_{H^{5}(\Omega)}^{2} + \|w^{1}\|_{H^{4}(\Omega)}^{2} + \|\theta^{0}\|_{H^{4}(\Omega)}^{2}$ when $s=3$),
	the unique local solution of system from Theorem \ref{w_THEOREM_LOCAL_EXISTENCE} exists globally, i.e., $T_{\mathrm{max}} = \infty$.
\end{theorem}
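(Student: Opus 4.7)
The plan is to invoke the blow-up alternative from Theorem \ref{w_THEOREM_LOCAL_EXISTENCE} and rule out both failure modes (loss of ellipticity of $a(\triangle w)$ and blow-up of the $\calZ_s \times \calT_s$-norm) under a smallness hypothesis, by means of a continuation/barrier argument driven by high-order energy estimates. Concretely, I would fix a threshold $M > 0$ (to be chosen small enough that Sobolev embedding $H^{s}(\Omega) \hookrightarrow C^{0}(\bar\Omega)$ together with continuity of $a$ keeps $\min_{\bar\Omega} a(\triangle w(t,\cdot)) \geq \tfrac{1}{2}\min_{\bar\Omega} a(\triangle w^{0})$ whenever $\|(w,\theta)(t)\|_{\calZ_s \times \calT_s}^{2} \leq M$), and then define the barrier time
\[
T^{\star} = \sup\bigl\{\,T \in [0, T_{\max}) : \|(w,\theta)(t)\|_{\calZ_s \times \calT_s}^{2} \leq M \text{ for all } t \in [0, T]\,\bigr\}.
\]
Provided I can prove an a priori estimate of the form $\|(w,\theta)(t)\|_{\calZ_s \times \calT_s}^{2} \leq \tfrac{M}{2}$ on $[0, T^{\star})$ whenever $\|(w^0,w^1,\theta^0)\|_{\calZ_s \times \calT_s}^{2} < \epsilon$ for a suitably small $\epsilon = \epsilon(M)$, continuity forces $T^{\star} = T_{\max}$; combined with the ellipticity preservation, both blow-up alternatives of Theorem \ref{w_THEOREM_LOCAL_EXISTENCE} are ruled out, so $T_{\max} = \infty$.

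The analytic heart of the proof is the derivation of the nonlinear energy inequality on $[0, T^{\star})$. I would differentiate Equations (\ref{EQUATION_INTRODUCTION_KIRCHHOFF_LOVE_PDE_1})--(\ref{EQUATION_INTRODUCTION_KIRCHHOFF_LOVE_PDE_2}) $k$ times in $t$ for $k = 0, 1, \dots, s$, multiply the plate equation by $\partial_{t}^{k+1} w$ (or $-\triangle \partial_{t}^{k+1} w$ to recover $\gamma$-rotational terms and the $H^{s+2}$-regularity in space) and the heat equation by $\partial_{t}^{k} \theta$ (respectively higher spatial derivatives), integrate by parts, and sum. Using the hinged/Dirichlet boundary conditions to kill boundary traces and the sign of $\sigma\theta$ and $-\eta\triangle\theta$ to extract the parabolic dissipation, the coupling $\alpha \triangle \theta$ versus $-\alpha \triangle w_t$ cancels in leading order and yields an identity of the form
\[
\tfrac{d}{dt} E_{s}(t) + D_{s}(t) = N_{s}(t),
\]
where $E_{s} \sim \|(w,\theta)\|_{\calZ_s \times \calT_s}^{2}$, $D_{s}$ collects the $\eta\|\nabla\partial_{t}^{k}\theta\|^{2} + \sigma\|\partial_{t}^{k}\theta\|^{2}$ dissipative terms, and $N_{s}$ contains all nonlinear commutators generated by $a(-\triangle w)\triangle^{2} w$ and $f(-\triangle w, -\nabla\triangle w)$. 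On $[0, T^{\star})$ the Moser/Gagliardo--Nirenberg estimates together with the smallness $E_{s} \leq M$ give $|N_{s}(t)| \leq C(M) E_{s}(t)^{1/2} \cdot (E_{s}(t) + D_{s}(t))$, so that after absorbing $D_{s}$ into the left-hand side (for $M$ small) one obtains $\tfrac{d}{dt}E_{s} + c_{0} D_{s} \leq C E_{s}^{3/2}$, and (crucially) by using the elliptic regularity $\|w\|_{H^{s+2}}^{2} \lesssim \|\triangle w\|_{H^{s}}^{2}$ together with the heat equation to express $\triangle w_{t}$ in terms of $\theta$-dissipation, one upgrades this to a Lyapunov-type inequality $\tfrac{d}{dt}\tilde E_{s} + c_{1}\tilde E_{s} \leq C \tilde E_{s}^{3/2}$ on a modified equivalent energy $\tilde E_{s}$.

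Integrating the latter gives $\tilde E_{s}(t) \leq \tilde E_{s}(0) e^{-c_{1} t}/(1 - C \tilde E_{s}(0)^{1/2}/c_{1})$ as long as the denominator stays positive, which is guaranteed by choosing $\epsilon$ so small that $C \epsilon^{1/2} < c_{1}/2$; this yields $\tilde E_{s}(t) \leq 2\tilde E_{s}(0) e^{-c_{1} t} \leq 2\epsilon$, and fixing $\epsilon < M/(4K)$ (with $K$ the equivalence constant between $E_{s}$ and $\tilde E_{s}$) delivers the required improvement $E_{s}(t) \leq M/2$ on $[0, T^{\star})$, closing the bootstrap. The main technical obstacle I expect is the commutator estimate for the quasilinear term $a(-\triangle w)\triangle^{2} w$ at the highest differentiation order $k = s$: here $\partial_{t}^{s}$ hits $a(-\triangle w)$ producing a top-order term $\partial_{t}^{s-1}(a'(-\triangle w)\triangle w_{t})\cdot \triangle^{2} w$ which at first glance competes with the principal part, so one must integrate by parts once more to shift one derivative onto the test function and then use the strict positivity of $a(\triangle w)$ (preserved by the barrier) to symmetrize; this is exactly the step where the linear wave equation with variable coefficients from the Appendix is invoked. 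The heat-type higher estimates for $\partial_t^{s-1}\theta \in L^2(H^2)$ and $\partial_t^s \theta \in L^2(L^2)$ stated in Theorem \ref{w_THEOREM_LOCAL_EXISTENCE} are essential to control the coupling at the top level, and the full argument is deferred to Theorem \ref{global_wellposedness_proof} in Section \ref{SECTION_LONG_TIME_BEHAVIOR}, where the value of $\epsilon$ is quantified.
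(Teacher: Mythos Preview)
Your overall barrier/continuation strategy is sound, but the proposal diverges from the paper's proof in two material ways and contains one genuine gap.

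\textbf{What the paper does differently.} First, the paper does not work with general $a(\cdot)$ and $f(\cdot,\cdot)$ in Section~\ref{SECTION_LONG_TIME_BEHAVIOR}: it specializes to $a(\zeta)=1+3\zeta^2$ and $f$ arising from $\omega A(z^3)$ (Equation~\eqref{definition_F}), so that after the substitution $z=-\triangle w$ the principal part becomes the \emph{constant-coefficient} operator $(A^{-1}+\gamma)\partial_t^2+A$ and the entire quasilinearity sits on the right-hand side as a cubic perturbation $F=-3z^2Az+6z|\nabla z|^2$. This sidesteps the variable-coefficient symmetrization you describe at top order. Second, the paper does not derive a pointwise differential Lyapunov inequality. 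It works with a \emph{reduced} energy $X(t)=E_2(t)+E_3(t)$ (strictly smaller than $\|\cdot\|_{\calZ_3\times\calT_3}^2$; the missing pieces $\|A^{3/2}z\|$, $\|z_{ttt}\|$, $\|A^2\theta\|$, $\|A^{1/2}\theta_{tt}\|$ are recovered a posteriori in Lemmas~\ref{energy_boost} and~\ref{equivalence_part1}) and proves an \emph{integral} inequality of the form
\[
X(T)+C_1\int_0^T X\,\mathrm{d}t \;\le\; C_2 X(0)+C_3\sum_j\int_0^T X^{\beta_j}\,\mathrm{d}t+C_4\sum_i X^{\alpha_i}(T),\qquad \alpha_i,\beta_j>1,
\]
the $X^{\alpha_i}(T)$ terms arising precisely from integration by parts \emph{in time} on the top-order product $\langle G_1,Az_{tt}\rangle$ (Equation~\eqref{X_G_Aztt}). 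The barrier argument (Theorem~\ref{global_wellposedness_proof}) is then run on this integral inequality, not on an ODE.

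\textbf{The gap.} Your step ``upgrade to a Lyapunov-type inequality $\frac{d}{dt}\tilde E_s+c_1\tilde E_s\le C\tilde E_s^{3/2}$'' does not follow from what you wrote. The thermal dissipation $D_s$ only controls $\theta$-norms; your proposed mechanism---using the heat equation to ``express $\triangle w_t$ in terms of $\theta$-dissipation''---yields $\alpha\,\triangle w_t=\beta\theta_t-\eta\triangle\theta+\sigma\theta$, but $\|\theta_t\|$ is an \emph{energy} term, not a dissipative one, so this does not produce damping of the mechanical variables. The paper instead recovers $\int_0^T\|A^{1/2}z_t\|^2$ by multiplying the heat equation by $Az_t$ and integrating the $\langle\theta_t,Az_t\rangle$ term by parts in time (Equation~\eqref{level2.2}), and recovers $\int_0^T\|Az\|^2$ by multiplying the plate equation by $Az$ (Equation~\eqref{level2.4}). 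These cross-multipliers are the essential thermoelastic observability step; without them (or an equivalent Lyapunov functional containing cross-terms like $\langle\theta,Az_t\rangle$ and $\langle z_t,Az\rangle$, which you do not construct), you cannot close the estimate with full damping $c_1\tilde E_s$ on the left. Once you include those multipliers you will naturally be led to the integral-inequality framework the paper uses, including the superlinear boundary terms at $t=T$.
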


\begin{theorem}[Uniform Stability] \label{w_uniform_stability}
	Under the conditions of Theorem \ref{w_global_wellposedness_proof} and given additionally
	$\|(w^0,w^1,\theta^0)\|_{\calZ_s \times \calT_s} \equiv \|(w^0, w^1, \dots, w^{s}, \theta^0, \dots, \theta^{s - 1})\|_{\calZ_s \times \calT_s} < \tilde{\epsilon}$ 
	for some small positive $\tilde{\epsilon}$ 
	(to be defined in Corollary \ref{cor_unif_stability} of Section \ref{SECTION_LONG_TIME_BEHAVIOR}), there exist positive constants $C$ and $k$ such that
	\begin{equation}
		\big\|(\partial_{t}^{\leq s} w, \partial_{t}^{\leq s - 1} \theta)(t, \cdot)\big\|_{\calZ_s \times \calT_s} 
		\leq Ce^{-kt} \|(w^0,w^1,\theta^0)\|_{\calZ_s \times \calT_s} \text{ for } t \geq 0. \notag
	\end{equation}
\end{theorem}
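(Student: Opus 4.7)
The plan is to construct a modified Lyapunov functional $\tilde{E}_s(t)$ equivalent to $\|(\partial_{t}^{\leq s} w, \partial_{t}^{\leq s - 1} \theta)\|_{\calZ_s \times \calT_s}^{2}$ and to prove a differential inequality of the form $\frac{d}{dt}\tilde{E}_s \leq -k\,\tilde{E}_s$, whence integrating and using the equivalence yields the stated exponential bound. Global existence from Theorem \ref{w_global_wellposedness_proof} guarantees that the solution stays defined and small in the $\calZ_s \times \calT_s$-norm, which is indispensable for absorbing the quasilinear commutators.

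First, I would record the basic energy identity at level zero: testing \eqref{EQUATION_INTRODUCTION_KIRCHHOFF_LOVE_PDE_1} with $w_t$ and \eqref{EQUATION_INTRODUCTION_KIRCHHOFF_LOVE_PDE_2} with $\tfrac{1}{\beta}\theta$, the coupling terms $\alpha\langle\triangle\theta,w_t\rangle$ and $-\tfrac{\alpha}{\beta}\langle\triangle w_t,\theta\rangle$ combine up to a constant, leaving the thermal dissipation $\eta\|\nabla\theta\|_{L^2}^{2}+\sigma\|\theta\|_{L^2}^{2}$ as the only negative term, modulo cubic remainders coming from $a$ and $f$. The heat dissipation alone does not control $w_t$, $\nabla w_t$, $\triangle w$, so auxiliary multipliers are needed: testing the plate equation with $-A^{-1}\theta$ (where $A=-\triangle$ with Dirichlet data) and the heat equation with $w_t$ produces, after integration by parts, a coercive contribution controlling $\|\triangle w_t\|_{L^2}^{2}$ modulo time-derivative of a boundary-less functional; a second perturbation $\varepsilon\int(w_t-\gamma\triangle w_t)\,w\,dx$ generates, via the plate equation, a coercive term in $\|\triangle w\|_{L^2}^{2}$ (with coefficient $\min_{\bar\Omega}a(\triangle w)>0$ from Theorem \ref{w_THEOREM_LOCAL_EXISTENCE}), again modulo pieces bounded by the thermal dissipation and by nonlinear remainders of higher order.

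Second, I would carry out analogous higher-order energy estimates. Differentiating the system $j$ times in $t$ for $j=1,\dots,s$, the pair $(\partial_{t}^{j}w,\partial_{t}^{j}\theta)$ satisfies the same linear principal part plus a commutator source $[\partial_{t}^{j},a(-\triangle w)\triangle^{2}]w$ and analogous terms from $f$. Applying the same multiplier scheme at each level $j$, I obtain a perturbed energy $\tilde{E}_j$ whose time derivative is bounded above by $-c\,\tilde{E}_j$ plus inner products of commutators with the relevant time derivatives. Elliptic regularity for $a(-\triangle w)\triangle^{2}$ (uniformly coercive by the smallness hypothesis) and for $A$ in the heat equation lets me trade time derivatives for spatial ones, thereby controlling the spatial regularity components of $\|(w,\theta)\|_{\calZ_s\times\calT_s}^{2}$ by the $\tilde{E}_j$. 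Taking $\tilde{E}_s:=\sum_{j=0}^{s}\kappa_j\tilde{E}_j$ with a hierarchy of small weights $\kappa_j$ yields a single functional satisfying
\begin{equation}
\tfrac{d}{dt}\tilde{E}_s(t)\;\leq\;-c\,\tilde{E}_s(t)\;+\;C\bigl(\|(w,\theta)(t)\|_{\calZ_s\times\calT_s}\bigr)\,\tilde{E}_s(t), \notag
\end{equation}
where the second term collects all nonlinear commutators estimated by Sobolev embeddings ($H^{s}\hookrightarrow L^\infty$ since $s\geq 3\geq\lceil d/2\rceil+1$) and Moser-type product/chain estimates applied to $a(\cdot)$ and $f(\cdot,\cdot)$.

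Finally, I would close the argument. Choosing $\tilde\epsilon$ small enough (smaller than the $\epsilon$ from Theorem \ref{w_global_wellposedness_proof}) so that $C\bigl(\|(w,\theta)\|_{\calZ_s\times\calT_s}\bigr)\leq c/2$ along the trajectory, the inequality reduces to $\tfrac{d}{dt}\tilde{E}_s\leq -\tfrac{c}{2}\tilde{E}_s$, so $\tilde{E}_s(t)\leq e^{-ct/2}\tilde{E}_s(0)$. The equivalence $\tilde{E}_s\sim \|(\partial_{t}^{\leq s}w,\partial_{t}^{\leq s-1}\theta)\|_{\calZ_s\times\calT_s}^{2}$, valid for sufficiently small multiplier weights, translates this into the required decay with $k=c/4$ and an appropriate $C$. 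The main obstacle is the second step: designing multipliers that produce coercive control of every missing component of the norm while keeping the commutator remainders small enough to be absorbed. In particular, because the dissipation enters only through $\theta$ whereas the quasilinear coefficient sits in front of the leading $\triangle^{2}w$ term, the perturbation weights must be finely tuned so that the induced indefinite cross-terms are dominated by the thermal dissipation rather than destroying the coercivity obtained from $a(-\triangle w)\triangle^{2}$.
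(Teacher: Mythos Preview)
Your proposal is correct in spirit but follows a genuinely different route from the paper. You construct a perturbed Lyapunov functional and aim for a pointwise differential inequality $\tfrac{d}{dt}\tilde E_s\le -k\tilde E_s$. The paper instead never writes down such a functional: working in the reduced variable $z=-\triangle w$, it first proves an \emph{integral} a~priori estimate (Lemma~\ref{lemma_barrier})
\[
X(T)+\int_0^T X(t)\,\mathrm{d}t\;\le\;C_1X(0)+\text{(superlinear terms in }X)
\]
via the multipliers $z_t$, $Az$, $Az_t$, $A\theta$, $A\theta_t$ and their time-differentiated analogues. After the barrier method gives global smallness, the superlinear terms are absorbed to yield $X(T)+\int_0^T X\le CX(0)$, and the same inequality propagated from any $s\in[0,T]$ gives $X(s)\ge X(T)/C$; feeding this back produces $X(T)\bigl(1+T/C\bigr)\le CX(0)$, hence $X(T)\le\kappa X(0)$ with $\kappa<1$ for $T$ large, and iteration on $[nT,(n+1)T]$ gives exponential decay (Corollary~\ref{cor_unif_stability}). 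Finally a separate lemma upgrades decay of $X$ to decay in the full $\calZ_s\times\calT_s$-norm.

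What each approach buys: the paper's observability-plus-iteration argument sidesteps the delicate tuning of multiplier weights you flag as the main obstacle---one only needs coercivity of the time-integrated dissipation, not a sign-definite derivative at every instant. It also works at a lower energy level ($X=E_2+E_3$) and recovers the top-order pieces of $\calZ_s\times\calT_s$ afterwards from the equations and maximal $L^2$-regularity, avoiding the need to differentiate $s$ times and control $\partial_t^s$-level commutators directly. Your Lyapunov route is more classical and would also succeed here, but requires exactly the fine balancing you identify, and you should be careful at the highest time-derivative level: since $\partial_t^s w$ lives only in $C^0([0,T],L^2)$, differentiating the system $s$ times and applying multipliers needs a regularization or the same ``work one level lower, then upgrade'' device the paper uses.
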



\section{Proof of Theorem \ref{w_THEOREM_LOCAL_EXISTENCE}: Local Well-Posedness} \label{SECTION_EXISTENCE_AND_UNIQUENESS}
To facilitate the analytical treatment of  (\ref{EQUATION_INTRODUCTION_KIRCHHOFF_LOVE_PDE_1})--(\ref{EQUATION_INTRODUCTION_KIRCHHOFF_LOVE_IC}),
we first reduce the order in space from four to two.
To this end, let $A$ denote the $L^{2}(\Omega)$-realization of the negative Dirichlet-Laplacian, i.e.,
\begin{equation}
	A := -\triangle, \quad
	D(A) := \big\{u \in H^{1}_{0}(\Omega) \,|\, \triangle u \in L^{2}(\Omega)\big\}. \notag
\end{equation}
Assuming $\partial \Omega$ is of class $C^{2}$, the elliptic regularity theory yields $D(A) = H^{2}(\Omega) \cap H^{1}_{0}(\Omega)$.
Moreover, $A$ is an isomorphism between $D(A)$ and $L^{2}(\Omega)$, $A^{-1}$ is a compact self-adjoint operator and
$(-\infty, 0]$ is contained both in the resolvent set of $A$ and $A^{-1}$.
Letting
\begin{equation}
	z := A w = -\triangle w, \label{EQUATION_EQUATION_ANSATZ_FOR_Z}
\end{equation}
Equations (\ref{EQUATION_INTRODUCTION_KIRCHHOFF_LOVE_PDE_1})--(\ref{EQUATION_INTRODUCTION_KIRCHHOFF_LOVE_IC}) rewrite
as an initial-boundary value problem for a system of partial \hbox{(pseudo-)}differential equations given by
\begin{subequations}
\begin{align}
	\big(A^{-1} + \gamma\big) z_{tt} + a(z) A z - \alpha A \theta &= f(z, \nabla z)\phantom{0} \text{ in } (0, \infty) \times \Omega,
	\label{EQUATION_NONLINEAR_PLATE_EQUATION_PDE_1} \\
	\beta \theta_{t} + \eta A \theta + \sigma \theta + \alpha z_{t} &= 0\phantom{f(z, \nabla z)} \text{ in } (0, \infty) \times \Omega,
	\label{EQUATION_NONLINEAR_PLATE_EQUATION_PDE_2} \\
	z = \theta &= 0\phantom{f(z, \nabla z)} \text{ in } (0, \infty) \times \partial \Omega,
	\label{EQUATION_NONLINEAR_PLATE_EQUATION_BC} \\
	z(0, \cdot) = z^{0}, \quad z_{t}(0, \cdot) = z^{1}, \quad \theta(0, \cdot) &= \theta^{0}\phantom{f(z, \nabla z} \text{ in } \Omega,
	\label{EQUATION_NONLINEAR_PLATE_EQUATION_IC}
\end{align}
\end{subequations}
where $z^{0} := -\triangle w^{0}$ and $z^{1} := -\triangle w^{1}$.
Note that, for any $s \geq 0$, the operator $A^{-1} + \gamma$ restricted onto $H^{s}(\Omega)$
is an automorphism of $H^{s}(\Omega)$. Therefore, Definition \ref{w_DEFINITION_CLASSICAL_SOLUTION} is equivalent with the following one in the new variable $z$:

\begin{definition}
	\label{DEFINITION_CLASSICAL_SOLUTION}
	Let $s \geq 2$. Under a classical solution
	to Equations (\ref{EQUATION_NONLINEAR_PLATE_EQUATION_PDE_1})--(\ref{EQUATION_NONLINEAR_PLATE_EQUATION_IC}) on $[0, T]$ at the energy level $s$,
	we understand a function pair $(z, \theta) \colon [0, T] \times \bar{\Omega} \to \mathbb{R} \times \mathbb{R}$ satisfying
	\begin{align*}
		z      &\in \Big(\bigcap_{m = 0}^{s - 1} C^{m}\big([0, T], H^{s - m}(\Omega) \cap H^{1}_{0}(\Omega)\big)\Big) \cap
		C^{s}\big([0, T], L^{2}(\Omega)\big), \\
		\theta &\in \Big(\bigcap_{k = 0}^{s - 2} C^{k}\big([0, T], H^{s + 1 - k}(\Omega) \cap H^{1}_{0}(\Omega)\big)\Big) \cap 
		C^{s - 1}\big([0, T], H^{1}_{0}(\Omega)\big)
	\end{align*}
	and, being plugged into Equations (\ref{EQUATION_NONLINEAR_PLATE_EQUATION_PDE_1})--(\ref{EQUATION_NONLINEAR_PLATE_EQUATION_IC}), turns them into tautology.
	Classical solutions on $[0, T)$ and $[0, \infty)$ are defined correspondingly. 
\end{definition}

\begin{remark}
	The choice $s = 2$ in Definition \ref{DEFINITION_CLASSICAL_SOLUTION}
	is standard for the linear situation, i.e., when $a(\cdot)$ is constant and the function $f(\cdot, \cdot)$ is linear.
	In this case, by virtue of the standard semigroup theory,
	for any initial data $(z^{0}, z^{1}, \theta^{0}) \in
	\big(H^{2}(\Omega) \cap H^{1}_{0}(\Omega)\big) \times H^{1}_{0}(\Omega) \times \big(H^{3}(\Omega) \cap H^{1}_{0}(\Omega)\big)$ with $\triangle \theta^{0} \in H^{1}_{0}(\Omega)$,
	there exists a unique classical solution at the energy level $s = 2$.

	On the contrary, if $a(\cdot)$ and $f(\cdot, \cdot)$ are both genuinely nonlinear,
	one usually can not expect obtaining a classical solution for the initial data at the energy level $s = 2$ (cf. \cite[Remark 14.4]{Ka1985}).
	Therefore, taking a higher energy level is inevitable to obtain classical solutions in the general nonlinear case.
	Unfortunately, this not only amounts to putting an additional Sobolev regularity assumption on the initial data
	and smoothness conditions on $a(\cdot)$ and $f(\cdot, \cdot)$,
	but also makes it necessary to postulate appropriate compatibility conditions.
\end{remark}

To better understand the nature of compatibility conditions, we make the following observation.
Assuming there exists a classical solution at an energy level $s \geq 2$,
we can use the smoothness in $t = 0$ and Equations (\ref{EQUATION_NONLINEAR_PLATE_EQUATION_PDE_1})--(\ref{EQUATION_NONLINEAR_PLATE_EQUATION_PDE_2}) to compute
\begin{equation}
	\begin{split}
		z_{tt} &= \big(A^{-1} + \gamma\big)^{-1} \Big(f(z, \nabla z) - a(z) A z + \alpha A \theta\Big), \\
		\theta_{t} &= -\tfrac{1}{\beta} \big(\eta A \theta + \sigma \theta + \alpha z_{t}\big).
	\end{split}
	\label{EQUATION_Z_TT_AND_THETA_T_RESOLVED}
\end{equation}
Evaluating these equations at $t = 0$, we obtain
\begin{equation}
	\begin{split}
		z_{tt}(0, \cdot) &= \big(A^{-1} + \gamma\big)^{-1} \Big(f(z^{0}, \nabla z^{0}) - a(z^{0}) A z^{0} + \alpha A \theta^{0}\Big), \\
		\theta_{t}(0, \cdot) &= -\tfrac{1}{\beta} \big(\eta A \theta^{0} + \sigma \theta^{0} + \alpha z^{1}\big).
	\end{split}
	\notag
\end{equation}
Assuming both $a(\cdot)$ and $f(\cdot, \cdot)$ are sufficiently smooth,
we can differentiate Equation (\ref{EQUATION_Z_TT_AND_THETA_T_RESOLVED}) with respect to $t$
and repeat the procedure to explicitely evaluate
$\partial_{t}^{m} z(0, \cdot)$ or $\partial_{t}^{k} \theta$ for $m = 2, \dots, s$ or $k = 1, \dots, s - 1$, respectively.
Thus, Definition \ref{w_definition_classic_solution} and Assumption \ref{w_ASSUMPTION_LOCAL_EXISTENCE} are equivalent to the following ones:

\begin{definition}
	Let $z^{m}$, $m \geq 2$, and $\theta^{k}$, $k \geq 1$, denote the ``initial values'' 
	for $\partial_{t}^{m} z$ and $\partial_{t}^{k} \theta$
	formally and recursively computed in terms of $z^{0}, z^{1}$ and $\theta^{0}$
	based on Equations (\ref{EQUATION_NONLINEAR_PLATE_EQUATION_PDE_1})--(\ref{EQUATION_NONLINEAR_PLATE_EQUATION_IC}) (cf. \cite[p. 96]{JiaRa2000}).
\end{definition}

\begin{assumption}
	\label{ASSUMPTION_LOCAL_EXISTENCE}
	Let $s \geq 3$ be an integer and let $\partial \Omega \in C^{s}$.
	\begin{enumerate}
		\item Let $a \in C^{s - 1}(\mathbb{R}, \mathbb{R})$.

		\item Let $f \in C^{s - 1}(\mathbb{R} \times \mathbb{R}^{2}, \mathbb{R})$.

		\item Let the initial data satisfy the regularity
		\begin{equation}
			z^{0} \in H^{s}(\Omega) \cap H^{1}_{0}(\Omega), \quad z^{1} \in H^{s-1}(\Omega) \cap H^{1}_{0}(\Omega), \quad \theta^{0} \in H^{s + 1}(\Omega) \cap H^{1}_{0}(\Omega) \notag
		\end{equation}
		as well as compatibility conditions
		\begin{equation}
			\begin{split}
				z^{m} &\in H^{s - m}(\Omega) \cap H^{1}_{0}(\Omega) \text{ for } m = 2, \dots, s - 1 \text{ and } z^{s} \in L^{2}(\Omega), \\
				\theta^{k} &\in H^{s + 1 - k}(\Omega) \cap H^{1}_{0}(\Omega) \text{ for } k = 1, \dots, s - 2 \text{ and } \theta^{s - 1} \in H^{1}_{0}(\Omega).
			\end{split}
			\notag
		\end{equation}
		
		\item Further, assume the ``initial ellipticity'' condition for $a(z^{0}) A$, i.e.,
		\begin{equation}
			\min_{x \in \bar{\Omega}} a\big(z_{0}(x)\big) > 0, \quad
			\text{where } z^{0} \in C^{0}(\bar{\Omega}) \text{ by virtue of Sobolev's imbedding theorem.} \notag
		\end{equation}
	\end{enumerate}
\end{assumption}

\noindent
We can reformulate our local well-posedness result Theorem \ref{w_THEOREM_LOCAL_EXISTENCE} in term of $z$ as follows:
\begin{theorem}
	\label{THEOREM_LOCAL_EXISTENCE}
	If Assumption \ref{ASSUMPTION_LOCAL_EXISTENCE} is satisfied for some $s \geq 3$,
	Equations (\ref{EQUATION_NONLINEAR_PLATE_EQUATION_PDE_1})--(\ref{EQUATION_NONLINEAR_PLATE_EQUATION_IC})
	possess a unique classical solution $(z, \theta)$ at the energy level $s$ on a maximal interval $[0, T_{\mathrm{max}}) \neq \emptyset$ additionally satisfying
	\begin{equation}
		\partial_{t}^{s - 1} \theta \in L^{2}\big(0, T; H^{2}(\Omega) \cap H^{1}_{0}(\Omega)\big) \text{ and }
		\partial_{t}^{s} \theta \in L^{2}\big(0, T; L^{2}(\Omega)\big) \notag
	\end{equation}
	along with
	\begin{equation}
		\min\limits_{x \in \bar{\Omega}} a\big(z(t, x)\big) > 0 \text{ for any } t \in [0, T_{\mathrm{max}}). \notag
	\end{equation}
	Unless $T_{\mathrm{max}} = \infty$, we have
	\begin{equation}
		\min\limits_{x \in \bar{\Omega}} a\big(z(t, x)\big) \to 0 \text{ as } t \nearrow T_{\mathrm{max}}
		\label{EQUATION_ELLIPTICITY_VIOLATION_AT_T_MAX}
	\end{equation}
	or/and
	\begin{equation}
		\sum_{k = 0}^{s} \big\|\partial_{t}^{k} z(t, \cdot)\big\|_{H^{s - k}(\Omega)}^{2} +
		\sum_{k = 0}^{s-2} \big\|\partial_{t}^{k} \theta (t, \cdot)\big\|_{H^{s + 1 - k}(\Omega)}^{2} +
		\big\|\partial_{t}^{s-1} \theta(t, \cdot)\big\|_{H^{1}(\Omega)}^{2} \to \infty
		\text{ as } t\nearrow T_{\mathrm{max}}.
		\label{EQUATION_BLOW_UP_AT_T_MAX}
	\end{equation}
\end{theorem}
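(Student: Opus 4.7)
The plan is to construct the classical solution by a Kato-type linearization-and-iteration scheme on a small time interval, leveraging the linear hyperbolic estimate for a wave equation with time- and space-dependent coefficient together with the parabolic estimate for the heat equation from the appendix. Given a reference function $\tilde z$ in a suitable closed bounded subset of the target regularity class, freeze the quasilinearity and solve the linear coupled system
\[
  (A^{-1} + \gamma) z_{tt} + a(\tilde z) A z - \alpha A \theta = f(\tilde z, \nabla \tilde z), \qquad \beta \theta_t + \eta A \theta + \sigma \theta + \alpha z_t = 0,
\]
with the original boundary and initial data; the heat half automatically supplies the parabolic regularization $\partial_t^{s-1}\theta \in L^2(0,T;H^2\cap H^1_0)$ and $\partial_t^s\theta \in L^2(0,T;L^2)$ promised in the conclusion.

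For the set-up, fix $s \geq 3$ and, using Assumption~\ref{ASSUMPTION_LOCAL_EXISTENCE}, verify that the recursively computed $z^m$, $\theta^k$ have the stated Sobolev regularity and compatibility; set
\[
  M_0^2 := \sum_{m=0}^{s-1}\|z^m\|_{H^{s-m}(\Omega)}^2 + \|z^s\|_{L^2(\Omega)}^2 + \sum_{k=0}^{s-2}\|\theta^k\|_{H^{s+1-k}(\Omega)}^2 + \|\theta^{s-1}\|_{H^1(\Omega)}^2,
\]
pick $R > M_0$, and set the ellipticity floor $a_0 := \tfrac12 \min_{\bar\Omega} a(z^0) > 0$. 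Define
\[
  \mathcal{X}_T := \Big\{ \tilde z : \partial_t^m \tilde z(0,\cdot) = z^m \text{ for } m \leq s,\ \textstyle\sum_{m=0}^{s}\|\partial_t^m \tilde z\|_{L^\infty(0,T;H^{s-m})}^2 \leq R^2,\ \min_{[0,T]\times\bar\Omega} a(\tilde z) \geq a_0\Big\},
\]
and let $\Phi : \tilde z \mapsto z$ be the solve map. The central analytic step is to differentiate the frozen system $j$ times in $t$ for $j = 0, \dots, s$, apply the appendix estimates, and close a tame energy inequality of the form
\[
  \mathcal{E}_s(z,\theta)(t) \leq C_1(R)\Big(M_0^2 + \int_0^t \big(\mathcal{E}_s(z,\theta) + P(R)\big) d\tau\Big),
\]
where Moser-type product inequalities control $a^{(j)}(\tilde z)$ and $\partial_t^j f(\tilde z,\nabla \tilde z)$ in the relevant $H^k$ norms, and where the parabolic dissipation of $\theta$ is exploited to absorb the top-order coupling term $\alpha A \theta$ coming out of the wave estimate. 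A Gronwall argument then yields $\mathcal{E}_s(z,\theta)(T) \leq R^2$ and preservation of the ellipticity floor for $T$ small enough depending on $R$, establishing $\Phi(\mathcal{X}_T) \subset \mathcal{X}_T$.

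Since contraction at the top energy level generically fails for quasilinear systems due to loss of derivatives, I would estimate differences $(z^{(n+1)} - z^{(n)}, \theta^{(n+1)} - \theta^{(n)})$ at a \emph{lower} energy level: they satisfy the same linear system with right-hand sides containing $[a(\tilde z^{(n+1)}) - a(\tilde z^{(n)})]A z^{(n+1)}$ and differences of $f$, which are tame in the high norm and contractive in the low norm. Combined with the uniform higher-regularity bound and weak-$\ast$ compactness, this produces a fixed point $z = \Phi(z)$ of the claimed regularity; uniqueness follows from the same difference estimate, and full continuity in time of the top norms via a Bona--Smith-type mollification argument. The blow-up alternative (\ref{EQUATION_ELLIPTICITY_VIOLATION_AT_T_MAX})--(\ref{EQUATION_BLOW_UP_AT_T_MAX}) is then a standard continuation argument: if $T_{\mathrm{max}} < \infty$ while $\min a(z(t,\cdot))$ stays bounded below and $\mathcal{E}_s$ stays bounded on $[0, T_{\mathrm{max}})$, then restarting the construction from time $T_{\mathrm{max}} - \delta$ extends the solution past $T_{\mathrm{max}}$, contradicting maximality.

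The hard part will be closing the top-order energy estimate, specifically taming the commutator $[\partial_t^s, a(z)]Az$ and its analogue for $f(z, \nabla z)$ without losing a derivative, while matching the parabolic gain in $\theta$ (which smooths only one spatial derivative relative to $\theta_t$) against the loss produced by the coupling $\alpha A\theta$ in the wave equation. This delicate matching is precisely what dictates the asymmetric $z$-vs.-$\theta$ regularity in Definition~\ref{DEFINITION_CLASSICAL_SOLUTION} and forces the extra integrability $\partial_t^{s-1}\theta \in L^2(0,T;H^2)$ and $\partial_t^s\theta \in L^2(0,T;L^2)$ asserted in the theorem.
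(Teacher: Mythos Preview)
Your proposal is correct and the overall strategy matches the paper's: freeze the quasilinearity, apply the appendix linear theorems, self-map via Gronwall, contract in a lower norm, then continue. A few tactical differences are worth noting. First, instead of imposing an ellipticity floor in the fixed-point set, the paper modifies $a(\cdot)$ to a globally positive $\hat a$ agreeing with $a$ on a neighborhood of $z^0(\bar\Omega)$, runs the entire Banach fixed-point argument for the modified problem, and only afterward restricts to the time interval where $\hat a(z) = a(z)$; this decouples the possible degeneracy from the iteration. Second, the paper's fixed-point map acts on \emph{pairs} $(\bar z, \bar\theta)$ and is sequential rather than coupled: one first solves the heat equation with right-hand side $-\tfrac{1}{\beta}(\alpha \bar z_t + \sigma \bar\theta)$ to get $\theta$, then solves the wave equation with coefficient $\hat a(\bar z)$ and forcing depending on $\bar z$ and the freshly computed $\theta$ (not $\bar\theta$). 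This way Theorems~\ref{THEOREM_APPENDIX_LINEAR_HEAT_EQUATION} and~\ref{THEOREM_APPENDIX_LINEAR_WAVE_EQUATION} apply one after the other and no separate coupled linear theory is needed, whereas your formulation with only $\tilde z$ frozen requires solving a coupled linear hyperbolic--parabolic system. Third, the paper uses the resolvent identity $(A^{-1}+\gamma)^{-1} = \tfrac{1}{\gamma}(1 - K)$ with $K = A^{-1}(A^{-1}+\gamma)^{-1}$ compact to put the wave half in the standard form $z_{tt} + \tfrac{1}{\gamma}\hat a(z)Az = F(z,\theta)$, exactly matching the appendix hypothesis. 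These choices make the paper's execution somewhat cleaner, but your route is equally valid.
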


\begin{proof}
	First, exploiting the second Hilbert's resolvent identity
	\begin{equation}
		\big(A^{-1} + \gamma\big)^{-1} = \tfrac{1}{\gamma} - \tfrac{1}{\gamma} A^{-1} \big(A^{-1} + \gamma\big)^{-1}, \notag
	\end{equation}
	we rewrite Equations (\ref{EQUATION_NONLINEAR_PLATE_EQUATION_PDE_1})--(\ref{EQUATION_NONLINEAR_PLATE_EQUATION_IC}) as
	\begin{subequations}
	\begin{align}
		z_{tt} + \tfrac{1}{\gamma} a(z) A z - \tfrac{\alpha}{\gamma} A \theta &= \phantom{-}F(z, \theta) & &\text{ in } (0, \infty) \times \Omega,
		\label{EQUATION_NONLINEAR_PLATE_EQUATION_TRANSFORMED_PDE_1} \\
		\theta_{t} + \tfrac{\eta}{\beta} A \theta &= -\tfrac{1}{\beta} \big(\alpha z_{t} + \sigma \theta\big) & &\text{ in } (0, \infty) \times \Omega,
		\label{EQUATION_NONLINEAR_PLATE_EQUATION_TRANSFORMED_PDE_2} \\
		z = \theta &= 0 & &\text{ on } (0, \infty) \times \partial \Omega,
		\label{EQUATION_NONLINEAR_PLATE_EQUATION_TRANSFORMED_BC} \\
		z(0, \cdot) = z^{0}, \quad z_{t}(t, \cdot) = z^{1}, \quad \theta(0, \cdot) &= \theta^{0} & &\text{ in } \Omega,
		\label{EQUATION_NONLINEAR_PLATE_EQUATION_TRANSFORMED_IC}
	\end{align}
	\end{subequations}
	where the nonlinear operator $F$ is given by
	\begin{equation}
		F(z, \theta) = \tfrac{1}{\gamma} (1 - K) f(z, \nabla z) + \tfrac{1}{\gamma} K \big(a(z) A z\big) - \tfrac{\alpha}{\gamma} K A \theta \notag
	\end{equation}
	with the compact linear operator
	\begin{equation}
		K := A^{-1} \big(A^{-1} + \gamma\big)^{-1} \notag
	\end{equation}
	continuously mapping $H^{s}(\Omega)$ to $H^{s + 2}(\Omega) \cap H^{1}_{0}(\Omega)$ for any $s \geq 0$
	(cf. proof of Theorem \ref{THEOREM_APPENDIX_LINEAR_HEAT_EQUATION}).
	Now, Equations (\ref{EQUATION_NONLINEAR_PLATE_EQUATION_TRANSFORMED_PDE_1})--(\ref{EQUATION_NONLINEAR_PLATE_EQUATION_TRANSFORMED_IC})
	are a pseudo-differential perturbation of a second-order hyperbolic-parabolic system
	constituted by a quasi-linear wave equation coupled to a linear heat equation. \medskip \\
	\noindent {\it Step 1: Modify the nonlinearity $a(\cdot)$.}
	Since no global positivity assumption is imposed on the nonlinearity $a(\cdot)$,
	the elipticity condition for $a(z) A$ can be violated at any time $t > 0$.
	To (preliminarily) rule out this possible degeneracy, the following construction is performed.
	
	Taking into account the continuity of $z^{0}$ and the connectedness of $\Omega$, we have
	\begin{equation}
		z^{0}(\bar{\Omega}) = \big[\min_{x \in \bar{\Omega}} z^{0}(x), \max_{x \in \bar{\Omega}} z^{0}(x)\big] =: J_{0}.
		\label{EQUATION_IMAGE_OF_DOMAIN_OMEGA_WRT_Z_NOUGHT}
	\end{equation}
	By Assumption \ref{ASSUMPTION_LOCAL_EXISTENCE}.4, $a(\cdot)$ is strictly positive on $J_{0}$.
	Consider an arbitrary {\em closed} set $J$ such that
	\begin{equation}
		J_{0} \subset \operatorname{int}(J) \text{ and } a(z) > 0 \text{ for } z \in J, \label{EQUATION_SET_J_NONTRIVIAL}
	\end{equation}
	which must exist due to the continuity of $a(\cdot)$.
	By standard continuation arguments, there exists a $C^{s}$-function $\hat{a}(\cdot)$ such that
	\begin{equation}
		\hat{a}(\zeta) = a(\zeta) \text{ for } \zeta \in J \text{ and } \inf_{\zeta \not \in J} \hat{a}(\zeta) > 0. \notag
	\end{equation}
	Now, we replace Equation (\ref{EQUATION_NONLINEAR_PLATE_EQUATION_TRANSFORMED_PDE_1}) with
	\begin{align}
		z_{tt} + \tfrac{1}{\gamma} \hat{a}(z) A z - \tfrac{\alpha}{\gamma} A \theta = F(z, \theta) \quad \text{ in } (0, \infty) \times \Omega
		\label{EQUATION_NONLINEAR_PLATE_EQUATION_TRANSFORMED_NEW_NONLINEARITY_PDE_1}
	\end{align}
	and first consider Equations 
	(\ref{EQUATION_NONLINEAR_PLATE_EQUATION_TRANSFORMED_NEW_NONLINEARITY_PDE_1}),
	(\ref{EQUATION_NONLINEAR_PLATE_EQUATION_TRANSFORMED_PDE_2})--(\ref{EQUATION_NONLINEAR_PLATE_EQUATION_TRANSFORMED_IC}).
	To solve this new problem, we tranform it to a fixed-point problem and use the Banach fixed-point theorem.
	Our proof will be reminiscent of that one by Jiang and Racke \cite[Theorem 5.2]{JiaRa2000}
	carried out for the quasilinear system of thermoelasticity. \medskip \\
	\noindent {\it Step 2: Define the fixed-point mapping.}
	Here and in the sequel, $H^{0}_{0}(\Omega) \equiv H^{0}(\Omega) := L^{2}(\Omega)$.
	For $N > 0$ and $T > 0$, let $X(N, T)$ denote the set of all regular distributions $(z, \theta)$ such that
	$(z, \theta)$ together with their weak derivatives satisfy the regularity conditions
	\begin{align*}
		\partial_{t}^{m} z      &\in C^{0}\big([0, T], H^{s - m}(\Omega)\big) \text{ for } m = 0, 1, \dots, s, \\
		\partial_{t}^{k} \theta &\in C^{0}\big([0, T], H^{s + 1 - k}(\Omega)\big) \text{ for } k = 0, 1, \dots, s - 2, \quad
		\partial_{t}^{s - 1} \theta \in C^{0}\big([0, T], H^{1}_{0}(\Omega)\big), \\
		\partial_{t}^{s - 1} \theta &\in L^{2}\big(0, T; H^{2}(\Omega) \cap H^{1}_{0}(\Omega)\big)  \text{ and }
		\partial_{t}^{s} \theta \in L^{2}\big(0, T; L^{2}(\Omega)\big)
	\end{align*}
	the boundary
	\begin{equation}
		\partial_{t}^{m} z = \partial_{t}^{k} \theta = 0 \text{ in } [0, T] \times \partial \Omega \text{ for } m, k = 0, 1, \dots, s - 1
		\notag
	\end{equation}
	and the initial conditions
	\begin{equation}
		\partial_{t}^{m} z(0, \cdot) = z^{m} \text{ for } m = 0, 1, \dots, s \text{ and }
		\partial_{t}^{k} \theta(0, \cdot) = \theta^{k} \text{ for } k = 0, 1, \dots, s - 1 \text{ in } \Omega
		\label{EQUATION_DEFINITION_OF_X_N_T_INITIAL_CONDITIONS}
	\end{equation}
	as well as the energy inequality
	\begin{equation}
		\begin{split}
			\max_{0 \leq t \leq T} \|\bar{D}^{s} z(t, \cdot)\|_{L^{2}(\Omega)}^{2} &+
			\sum_{k = 0}^{s - 2} \max_{0 \leq t \leq T} \|\partial_{t}^{k} \theta(t, \cdot)\|_{H^{s + 1 - k}(\Omega)}^{2} +
			\max_{0 \leq t \leq T} \|\partial_{t}^{s - 1} \theta(t, \cdot)\|_{H^{1}(\Omega)}^{2} \\
			&+ \int_{0}^{T} \big(\|\triangle \partial_{t}^{s - 1} \theta(t, \cdot)\|_{L^{2}(\Omega)}^{2} + \|\partial_{t}^{s} \theta(t, \cdot)\|_{L^{2}(\Omega)}^{2}\big) \mathrm{d}t \leq N^{2}.
		\end{split}
		\label{EQUATION_ENERGY_CONSTRAINT_FOR_X_N_T}
	\end{equation}
	Here, for $n \geq 0$, we let
	\begin{equation}
		  \bar{D}^{n} := \big((\partial_{t}, \nabla)^{\alpha} \,|\, 0 \leq |\alpha| \leq n\big). \notag
	\end{equation}
	
	For any $T_{0} > 0$ and sufficiently large $N > 0$,
	the set $X(N, T)$ is not empty for any $T \in (0, T_{0}]$.
	Indeed, if $N$ is sufficiently large, any pair $(z, \theta)$ of Taylor polynomials
	\begin{equation}
		z(t, \cdot) = \sum_{k = 0}^{s} \frac{z^{k} t^{k}}{k!} + P_{z}(t, \cdot) t^{s + 1}, \quad
		\theta(t, \cdot) = \sum_{k = 0}^{s-1} \frac{\theta^{k} t^{k}}{k!} + P_{\theta}(t, \cdot) t^{s}, \notag
	\end{equation}
	is contained in $X(N, T)$, where $P_{z}$, $P_{\theta}$ are arbitrary $C^{\infty}_{0}\big(\Omega)$-valued polynomials w.r.t. $t$.
	
	For $(\bar{z}, \bar{\theta}) \in X(N, T)$, consider the linear operator $\mathscr{F}$
	mapping $(\bar{z}, \bar{\theta})$ to a function pair $(z, \theta)$ such that $\theta$ is the unique classical solution to the linear heat equation
	\begin{equation}
		\begin{split}
			\theta_{t}(t, x) - \tfrac{\eta}{\beta} \triangle \theta(t, x) &= \bar{g}(t, x)\phantom{0} \text{ for } (t, x) \in (0, T) \times \Omega, \\
			\theta &= 0\phantom{\bar{g}(t, x)} \text{ for } (t, x) \in (0, T) \times \partial \Omega, \\
			\theta(0, \cdot) &= \theta^{0}(x)\;\;\text{ for } x \in \Omega
		\end{split}
		\label{EQUATION_HEAT_EQUATION_FIX_POINT_MAPPING_DEFINITION}
	\end{equation}
	with
	\begin{equation}
		\bar{g}(t, x) = -\tfrac{1}{\beta} \big(\alpha \bar{z}_{t}(t, x) + 
		\sigma \bar{\theta}(t, x)\big) \text{ for } (t, x) \in [0, T] \times \bar{\Omega}
		\label{EQUATION_DEFINITION_BAR_G_LOCAL_EXISTENCE}
	\end{equation}
	and, subsequently, define $z$ to be the unique classical solution to the linear wave equation
	\begin{equation}
		\begin{split}
			z_{tt}(t, x) - \bar{a}_{ij}(t, x) \triangle z(t, x) &= \bar{f}(t, x)\phantom{0} \text{ for } (t, x) \in (0, T) \times \Omega, \\
			z(t, x) &= 0\phantom{\bar{f}(t, x)} \text{ for } (t, x) \in (0, T) \times \partial \Omega, \\
			z(0, x) = z^{0}(x), \quad z_{t}(0, x) &= z^{1}(x)\phantom{00} \text{ for } x \in \Omega
		\end{split}
		\label{EQUATION_WAVE_EQUATION_FIX_POINT_MAPPING_DEFINITION}
	\end{equation}
	with
	\begin{equation}
		\begin{split}
			\bar{a}_{ij}(t, x) &:= \tfrac{1}{\gamma} \hat{a}\big(\bar{z}(t, x)\big) \delta_{ij} \text{ and } \\
			\bar{f}(t, x) &:= \tfrac{1}{\gamma} \big((1 - K) f(\bar{z}, \nabla \bar{z})\big)(t, x) + 
			\tfrac{1}{\gamma} \big(K \big(\hat{a}(\bar{z}) A \bar{z}\big)\big)(t, x) - \tfrac{\alpha}{\gamma} \big((1 - K) A \theta\big)(t, x)
		\end{split}
		\label{EQUATION_DEFINITION_BAR_A_AND_BAR_F_LOCAL_EXISTENCE}
	\end{equation}
	for $(t, x) \in [0, T] \times \Omega$.
	Note that the right-hand side $\bar{f}$ depends on $A \theta$ and not $A \bar{\theta}$ as the standard procedure would suggest.
	
	We prove $\mathscr{F}$ is well-defined.
	By the definition of $\bar{g}$ in Equation (\ref{EQUATION_DEFINITION_BAR_G_LOCAL_EXISTENCE}) 
	and the regularity of $(\bar{z}, \bar{g}) \in X(N, T)$, we trivially have
	\begin{equation}
		\partial_{t}^{k} \bar{g} \in C^{0}\big([0, T], H^{s - 1 - k}(\Omega)\big) \text{ for } k = 0, 1, \dots, s - 1. \notag
	\end{equation}
	By virtue of Theorem \ref{THEOREM_APPENDIX_LINEAR_HEAT_EQUATION},
	Equation (\ref{EQUATION_HEAT_EQUATION_FIX_POINT_MAPPING_DEFINITION})
	possesses a unique classical solution $\theta$ satisfying
	\begin{align*}
		\partial_{t}^{k} \theta &\in C^{0}\big([0, T], H^{s + 1 - k}(\Omega) \cap H^{1}_{0}(\Omega)\big) \text{ for } k = 0, 1, \dots, s - 2, \\
		\partial_{t}^{s - 1} \theta &\in C^{0}\big([0, T], H^{1}_{0}(\Omega)\big) \cap L^{2}\big(0, T; H^{2}(\Omega) \cap H^{1}_{0}(\Omega)\big)
		\text{ and } \partial_{t}^{s} \theta \in L^{2}\big(0, T; L^{2}(\Omega)\big). \notag
	\end{align*}
	Now, taking into account the regularity of $\bar{z}$ and $\theta$,
	exploiting Assumption \ref{ASSUMPTION_LOCAL_EXISTENCE} and applying Sobolev's imbedding theorem,
	we can verify that Assumption \ref{ASSUMPTION_LINEAR_WAVE_EQUATION} is satisfied with
	\begin{equation}
		\gamma_{i} = \max_{0 \leq t \leq T} \bar{\gamma}_{i}\big(\big\|\bar{z}(t, \cdot)\big\|_{H^{s - 1}(\Omega)}\big)
		\text{ for } i = 0, 1,
		\label{EQUATION_LOCAL_EXISTENCE_DEFINITION_OF_GAMMA_I}
	\end{equation}
	where $\gamma_{0}, \gamma_{1} \colon [0, \infty) \to (0, \infty)$ are continuous functions.
	Here, we used the Sobolev imbedding $\nabla \bar{z}(t, \cdot) \in H^{2}(\Omega) \hookrightarrow L^{\infty}(\Omega)$ along with the estimate
	\begin{equation}
		\big\|K\big(\hat{a}(\bar{z}) A \bar{z}\big)\big\|_{H^{m + 2}(\Omega)} \leq
		C \big\|\hat{a}(\bar{z}) A \bar{z}\big\|_{H^{m}(\Omega)} \text{ for } m = 0, 1, \dots, s - 2. \notag
	\end{equation}
	Here and in the following, $C > 0$ denotes a generic constant.
	Hence, by Theorem \ref{THEOREM_APPENDIX_LINEAR_WAVE_EQUATION},
	Equation (\ref{EQUATION_WAVE_EQUATION_FIX_POINT_MAPPING_DEFINITION})
	possesses a unique classical solution
	\begin{equation}
		z \in \bigcap_{m = 0}^{s - 1} C^{m}\big([0, T], H^{s - m}(\Omega) \cap H^{1}_{0}(\Omega)\big) \cap
		C^{s}\big([0, T], L^{2}(\Omega)\big) \notag
	\end{equation}
	implying $(z, \theta) \in X(N, T)$. Therefore, the mapping $\mathscr{F}$ is well-defined. \medskip \\
	\noindent {\it Step 3: Show the self-mapping property.}
	We prove that $\mathscr{F}$ maps $X(N, T)$ into itself provided $N$ is sufficiently large and $T$ is sufficiently small. We define
	\begin{equation}
		E_{0}(T) := \sum_{m = 0}^{s} \|z^{m}\|_{H^{s - m}(\Omega)}^{2} + \sum_{k = 0}^{s - 2} \|\theta^{k}\|_{H^{s + 1 - k}(\Omega)}^{2} + \|\theta^{s - 1}\|_{H^{1}(\Omega)}^{2}.
		\notag
	\end{equation}
	Recalling the definition of $\bar{g}$ in Equation (\ref{EQUATION_DEFINITION_BAR_G_LOCAL_EXISTENCE}),
	applying Theorem \ref{THEOREM_APPENDIX_LINEAR_HEAT_EQUATION} and using Equation (\ref{EQUATION_ENERGY_CONSTRAINT_FOR_X_N_T}), 
	we can estimate
	\begin{equation}
		\begin{split}
			\sum_{k = 0}^{s - 2} \max_{0 \leq t \leq T} \|\partial_{t}^{k} \theta(t, \cdot)&\|_{H^{s + 1 - k}(\Omega)}^{2} +
			\max_{0 \leq t \leq T} \|\partial_{t}^{s - 1} \theta(t, \cdot)\|_{H^{1}(\Omega)}^{2} \\
			&+ \int_{0}^{T} \big(\|\triangle \partial_{t}^{s - 1} \theta(t, \cdot)\|_{L^{2}(\Omega)}^{2} + \|\partial_{t}^{s} \theta(t, \cdot)\|_{L^{2}(\Omega)}^{2}\big) \mathrm{d}t
			\leq C N^{2} + C E_{0}.
		\end{split}
		\label{EQUATION_LOCAL_EXISTENCE_ESTIMATE_FOR_THETA}
	\end{equation}
	Further, taking into account Equations
	(\ref{EQUATION_DEFINITION_OF_X_N_T_INITIAL_CONDITIONS}), 
	(\ref{EQUATION_ENERGY_CONSTRAINT_FOR_X_N_T}) and (\ref{EQUATION_DEFINITION_BAR_A_AND_BAR_F_LOCAL_EXISTENCE})
	and applying Sobolev imbedding theorem and \cite[Theorem B.6]{JiaRa2000}, we obtain
	\begin{equation}
		\int_{0}^{T} \|\partial_{t}^{s - 1} \bar{f}(t, \cdot)\|_{L^{2}(\Omega)}^{2} \mathrm{d}t
		\leq C(N) (1 + T)
		\label{EQUATION_NONLINEAR_WAVE_EQUATION_RIGHT_HAND_SIDE_ESTIMATE_HIGHEST_ENERGY_LEVEL}
	\end{equation}
	and
	\begin{align}
		\sum_{m = 0}^{s - 2} &\max_{0 \leq t \leq T} \|\partial_{t}^{m} \bar{f}(t, \cdot)\|_{H^{s - 2 - m}(\Omega)}^{2} \notag \\
		&\leq
		\sum_{m = 0}^{s - 2} \max_{0 \leq t \leq T} \big\|\partial_{t}^{m} \big(\tfrac{1}{\gamma} (1 - K) f(\bar{z}, \nabla \bar{z}) + 
		\tfrac{1}{\gamma} K \big(\hat{a}(\bar{z}) A \bar{z}\big)
		-\tfrac{\alpha}{\gamma} (1 - K) A \theta\big)(t, \cdot)\big\|_{H^{s - 2 - m}(\Omega)}^{2} \notag \\
		&\leq
		\sum_{m = 0}^{s - 2} \big\|\partial_{t}^{m} \big(\tfrac{1}{\gamma} (1 - K) f(\bar{z}, \nabla \bar{z}) + 
		\tfrac{1}{\gamma} K \big(\hat{a}(\bar{z}) A \bar{z}\big)
		-\tfrac{\alpha}{\gamma} (1 - K) A \theta\big)(0, \cdot)\big\|_{H^{s - 2 - m}(\Omega)}^{2} \notag \\
		&+
		\sum_{m = 0}^{s - 2} \int_{0}^{T} \big\|\partial_{t}^{m} \big(\tfrac{1}{\gamma} (1 - K) f(\bar{z}, \nabla \bar{z}) + 
		\tfrac{1}{\gamma} K \big(\hat{a}(\bar{z}) A \bar{z}\big)
		-\tfrac{\alpha}{\gamma} (1 - K) A \theta\big)(t, \cdot)\big\|_{H^{s - 2 - m}(\Omega)}^{2} \mathrm{d}t \notag \\
		&\leq C(E_{0}) + C(N)(1 + T),
		\label{EQUATION_NONLINEAR_WAVE_EQUATION_RIGHT_HAND_SIDE_ESTIMATE}
	\end{align}
	where the fundamental theorem of calculus was employed.
	Plugging Equations (\ref{EQUATION_NONLINEAR_WAVE_EQUATION_RIGHT_HAND_SIDE_ESTIMATE_HIGHEST_ENERGY_LEVEL})
	and (\ref{EQUATION_NONLINEAR_WAVE_EQUATION_RIGHT_HAND_SIDE_ESTIMATE})
	into the energy estimate in Theorem \ref{THEOREM_APPENDIX_LINEAR_WAVE_EQUATION},
	we obtain
	\begin{equation}
		\max_{0 \leq t \leq T} \|\bar{D}^{s} z(t, \cdot)\|_{L^{2}(\Omega)}^{2} \leq
		\bar{K}(E_{0}, \gamma_{0}, \gamma_{1}) \zeta(N, T)
		\label{EQUATION_LOCAL_EXISTENCE_ESTIMATE_FOR_Z}
	\end{equation}
	with positive constants $\gamma_{0}, \gamma_{1}$ defined in Equation (\ref{EQUATION_LOCAL_EXISTENCE_DEFINITION_OF_GAMMA_I}),
	a positive constant $K$ being a continuous function of its variables and
	\begin{equation}
		\zeta(N, T) =
		\Big(1 + C(N) T^{1/2} \sum_{i = 0}^{5} T^{i/2}\Big)
		\exp\big(T^{1/2} C(N) (1 + T^{1/2} + T + T^{3/2})\big). \notag
	\end{equation}
	Combining the estimates in Equations
	(\ref{EQUATION_LOCAL_EXISTENCE_ESTIMATE_FOR_THETA}) and (\ref{EQUATION_LOCAL_EXISTENCE_ESTIMATE_FOR_Z}),
	we obtain
	\begin{equation}
		\begin{split}
			\max_{0 \leq t \leq T} &\|\bar{D}^{s} z(t, \cdot)\|_{L^{2}(\Omega)}^{2} +
			\sum_{k = 0}^{s - 2} \max_{0 \leq t \leq T} \|\partial_{t}^{k} \theta(t, \cdot)\|_{H^{s + 1 - k}(\Omega)}^{2} +
			\max_{0 \leq t \leq T} \|\partial_{t}^{s - 1} \theta(t, \cdot)\|_{H^{1}(\Omega)}^{2} \\
			&+ \int_{0}^{T} \big(\|\triangle \partial_{t}^{s - 1} \theta(t, \cdot)\|_{L^{2}(\Omega)}^{2} + \|\partial_{t}^{s} \theta(t, \cdot)\|_{L^{2}(\Omega)}^{2}\big) \mathrm{d}t 
			\leq \bar{K}(E_{0}, \gamma_{0}, \gamma_{1}) \zeta(N, T),
		\end{split}
		\label{EQUATION_LOCAL_EXISTENCE_ESTIMATE_FOR_Z_AND_THETA}
	\end{equation}
	possibly, with an increased constant $\bar{K}$.
	
	We now select $N$ such that
	\begin{equation}
		N^{2} \leq \tfrac{1}{2} \bar{K}(E_{0}, \gamma_{0}, \gamma_{1}). \notag
	\end{equation}
	Due to continuity of $\zeta(N, \cdot)$ in $T = 0$ and $\zeta(N_{0}, 0) = 1$,
	there exists $T > 0$ such that $\zeta\big(N, (0, T]\big) \subset [1, 2]$.
	Hence, the estimate in Equation (\ref{EQUATION_LOCAL_EXISTENCE_ESTIMATE_FOR_Z_AND_THETA})
	is satisfied with $N^{2}$ on its right-hand side.
	Therefore, $(z, \theta) \in X(N, T)$ and $\mathscr{F}$ maps $X(N, T)$ into itself. \medskip \\
	\noindent {\it Step 4: Prove the contraction property.}
	We consider the metric space
	\begin{equation}
		Y := \Big\{(z, \theta) \,\big|\,
		z, z_{t}, \nabla z \in L^{\infty}\big(0, T; L^{2}(\Omega)\big) \text{ and }
		\theta \in L^{\infty}\big(0, T; H^{1}(\Omega)\big)\Big\} \notag
	\end{equation}
	equipped with the metric
	\begin{equation}
		\rho\big((z, \theta), (\bar{z}, \bar{\theta})\big) =
		\Big(\mathop{\operatorname{ess\,sup}}_{0 \leq t \leq T} \big\|\bar{D}^{1} \big(z - \bar{z}\big)(t, \cdot)\big\|_{L^{2}(\Omega)}^{2} +
		\mathop{\operatorname{ess\,sup}}_{0 \leq t \leq T} \big\|(\theta - \bar{\theta})(t, \cdot)\big\|_{H^{1}(\Omega)}^{2}\Big)^{1/2}
		\notag
	\end{equation}
	for $(z, \theta), (\bar{z}, \bar{\theta}) \in Y$.
	Obviously, $Y$ is complete. Further, $X(N, T) \subset Y$. Moreover, $X(N, T)$ is closed in $Y$.
	Indeed, consider a sequence $\big((z_{n}, \theta_{n})\big)_{n \in \mathbb{N}} \subset X(N, T)$
	such that it is a Cauchy sequence in $Y$ and, thus, converges to some $(z, \theta) \in Y$.
	With the uniform energy bound in Equation (\ref{EQUATION_ENERGY_CONSTRAINT_FOR_X_N_T})
	being valid for $\big((z_{n}, \theta_{n})\big)_{n \in \mathbb{N}}$,
	it must possess a subsequence
	which weakly-$\ast$ converges to some element $(z^{\ast}, \theta^{\ast}) \subset X(N, T)$ in respective topologies.
	Since strong and weak-$\ast$ limits coincide, 
	we have $(z, \theta) = (z^{\ast}, \theta^{\ast}) \in X(N, T)$.
	
	We now prove that $\mathscr{F} \colon X(N, T) \to X(N, T)$ is a contraction mapping w.r.t. $\rho$.
	For $(\bar{z}, \bar{\theta}), (\bar{z}^{\ast}, \bar{z}^{\ast}) \in X(N, T)$, let
	$(z, \theta) := \mathscr{F}\big((\bar{z}, \bar{\theta})\big)$,
	$(z^{\ast}, \theta^{\ast}) := \mathscr{F}\big((\bar{z}^{\ast}, \bar{\theta}^{\ast})\big)$.
	With $(\bar{z}, \bar{\theta})$, $(\bar{z}^{\ast}, \bar{\theta}^{\ast})$,
	$(z, \theta)$, $(z^{\ast}, \theta^{\ast})$ all lying in $X(N, T)$,
	Equation (\ref{EQUATION_ENERGY_CONSTRAINT_FOR_X_N_T}) along with Sobolev imbedding theorem imply
	\begin{equation}
		\mathop{\operatorname{ess\,sup}}_{0 \leq t \leq T}
		\big\|\big(\bar{D}^{1}(\bar{z}, \bar{z}^{\ast}, z, z^{\ast})\big)(t, \cdot)\big\|_{L^{\infty}(\Omega)} \leq CN.
		\label{EQUATION_NONLINEAR_PLATE_EQUATION_NABLA_Z_ESTIMATE}
	\end{equation}
	Recalling Equations (\ref{EQUATION_NONLINEAR_PLATE_EQUATION_TRANSFORMED_PDE_1})--(\ref{EQUATION_NONLINEAR_PLATE_EQUATION_TRANSFORMED_PDE_2}),
	we can easily see that $(\tilde{z}, \tilde{\theta}) := (z - z^{\ast}, \theta - \theta^{\ast})$ satisfies
	\begin{align}
		\tilde{z}_{tt} + \tfrac{1}{\gamma} \hat{a}(z) A \tilde{z} - \tfrac{\alpha}{\gamma} A \tilde{\theta} &=
		F(\bar{z}, \bar{\theta}) - F(\bar{z}^{\ast}, \bar{\theta}^{\ast}) -
		\big(\hat{a}(\bar{z}) - \hat{a}(\bar{z}^{\ast})\big) A z^{\ast}, 
		\label{EQUATION_NONLINEAR_PLATE_EQUATION_TRANSFORMED_PDE_SOLUTION_DIFFERENCE_1} \\
		\tilde{\theta}_{t} + \tfrac{\eta}{\beta} A \tilde{\theta} &= -\tfrac{\alpha}{\beta} (\bar{z}_{t} - \bar{z}_{t}^{\ast}) - \tfrac{\sigma}{\beta} (\bar{\theta} - \bar{\theta}^{\ast})
		\label{EQUATION_NONLINEAR_PLATE_EQUATION_TRANSFORMED_PDE_SOLUTION_DIFFERENCE_2}
	\end{align}
	in $(0, T) \times \Omega$. Further, we have
	\begin{align}
		\tilde{z} = \tilde{\theta} &= 0 & &\text{ on } (0, \infty) \times \partial \Omega,
		\label{EQUATION_NONLINEAR_PLATE_EQUATION_TRANSFORMED_SOLUTION_DIFFERENCE_BC} \\
		\tilde{z}(0, \cdot) \equiv 0, \quad \tilde{z}_{t}(t, \cdot) \equiv 0, \quad \tilde{\theta}(0, \cdot) &\equiv 0 & &\text{ in } \Omega.
		\label{EQUATION_NONLINEAR_PLATE_EQUATION_TRANSFORMED_SOLUTION_DIFFERENCE_IC}
	\end{align}
	Multiplying Equation (\ref{EQUATION_NONLINEAR_PLATE_EQUATION_TRANSFORMED_PDE_SOLUTION_DIFFERENCE_2}) in $L^{2}(\Omega)$ with $A \tilde{\theta}$,
	using Young's inequality and integrating w.r.t. $t$, we obtain
	\begin{equation*}
		\begin{split}
			\|A^{1/2} &\tilde{\theta}(t, \cdot)\|_{L^{2}(\Omega)}^{2} +
			\tfrac{\eta}{\beta} \int_{0}^{t} \|A\tilde{\theta}(\tau, \cdot)\|_{L^{2}(\Omega)}^{2} \\
			&\leq \varepsilon \int_{0}^{t} \|A \tilde{\theta}(\tau, \cdot)\|_{L^{2}(\Omega)}^{2} \mathrm{d}\tau
			+ C_{\varepsilon} T \mathop{\operatorname{ess\,sup}}_{0 \leq \tau \leq t} \Big(\big\|\bar{D}^{1} \big(\bar{z} - \bar{z}^{\ast}\big)(\tau, \cdot)\big\|_{L^{2}(\Omega)}^{2} +
			\big\|(\bar{\theta} - \bar{\theta}^{\ast})(\tau, \cdot)\big\|_{L^{2}(\Omega)}^{2}\Big).
		\end{split}
	\end{equation*}
	Hence, by Poincare-Friedrichs' inequality, selecting $\varepsilon > 0$ sufficiently small, we obtain
	\begin{equation}
		\begin{split}
			\|\tilde{\theta}(t, \cdot)\|_{H^{1}(\Omega)}^{2}
			&\leq -\tfrac{\eta}{2 \beta} \int_{0}^{t} \|A\tilde{\theta}(\tau, \cdot)\|_{L^{2}(\Omega)}^{2} \mathrm{d}\tau \\
			&+ C T \mathop{\operatorname{ess\,sup}}_{0 \leq \tau \leq t} \Big(\big\|\bar{D}^{1} \big(\bar{z} - \bar{z}^{\ast}\big)(\tau, \cdot)\big\|_{L^{2}(\Omega)}^{2} +
			\big\|(\bar{\theta} - \bar{\theta}^{\ast})(\tau, \cdot)\big\|_{L^{2}(\Omega)}^{2}\Big)
		\end{split}
		\label{EQUATION_NONLINEAR_PLATE_EQUATION_SOLUTION_DIFFERENCE_THETA_ESTIMATE}
	\end{equation}
	Similarly, multiplying Equations 
	(\ref{EQUATION_NONLINEAR_PLATE_EQUATION_TRANSFORMED_PDE_SOLUTION_DIFFERENCE_1}) in $L^{2}\big(0, T; L^{2}(\Omega)\big)$ with $\tilde{z}_{t}$,
	applying Green's formula, using chain and product rules, taking into account Equation (\ref{EQUATION_NONLINEAR_PLATE_EQUATION_TRANSFORMED_SOLUTION_DIFFERENCE_BC}),
	exploiting the local Lipschitz continuity of $\hat{a}(\cdot)$ and $f(\cdot, \cdot)$,
	using Equations  (\ref{EQUATION_ENERGY_CONSTRAINT_FOR_X_N_T}) and (\ref{EQUATION_NONLINEAR_PLATE_EQUATION_SOLUTION_DIFFERENCE_THETA_ESTIMATE})
	as well as exploiting Young's and Poincar\'{e}-Friedrichs's inequalities,
	we can estimate for any $t \in [0, T]$
	\begin{equation}
		\begin{split}
			\big\|\bar{D}^{1} \tilde{z}(t, \cdot)\big\|_{L^{2}(\Omega)}^{2} &\leq
			\tfrac{\eta}{2 \beta} \int_{0}^{t} \|A\tilde{\theta}(\tau, \cdot)\|_{L^{2}(\Omega)}^{2} \mathrm{d}\tau +
			C T \mathop{\operatorname{ess\,sup}}_{0 \leq \tau \leq t} \big\|\big(\bar{\theta} - \bar{\theta}^{\ast}\big)(\tau, \cdot)\big\|_{L^{2}(\Omega)}^{2} \\
			&+ C(N) \Big((1 + T^{-1/2}) \int_{0}^{t} \big\|\bar{D}^{1} \tilde{z}(\tau, \cdot)\big\|_{L^{2}(\Omega)}^{2} \mathrm{d}\tau \\
			&+ T^{1/2} (1 + T)  \mathop{\operatorname{ess\,sup}}_{0 \leq \tau \leq t} \big\|\bar{D}^{1} \big(\bar{z} - \bar{z}^{\ast}\big)(\tau, \cdot)\big\|_{L^{2}(\Omega)}^{2}\Big)
		\end{split}
		\label{EQUATION_NONLINEAR_PLATE_EQUATION_SOLUTION_DIFFERENCE_Z_ESTIMATE}
	\end{equation}
	Adding up Equations
	(\ref{EQUATION_NONLINEAR_PLATE_EQUATION_SOLUTION_DIFFERENCE_THETA_ESTIMATE})--(\ref{EQUATION_NONLINEAR_PLATE_EQUATION_SOLUTION_DIFFERENCE_Z_ESTIMATE}),
	using Gronwall's inequality, taking into account Equation (\ref{EQUATION_NONLINEAR_PLATE_EQUATION_TRANSFORMED_SOLUTION_DIFFERENCE_IC})
	and selecting $T$ sufficiently small, we can estimate
	\begin{equation}
		\rho\big((z, \theta), (z^{\ast}, \theta^{\ast})\big) \leq
		\lambda \rho\big((\bar{z}, \bar{\theta}), (\bar{z}^{\ast}, \bar{\theta}^{\ast})\big) \notag
	\end{equation}
	for some $\lambda \in (0, 1)$.
	Hence, $\mathscr{F}$ is a contraction on $X(N, T)$ in the metric of space $Y$.
	With $X(N, T)$ being closed, Banach fixed-point theorem implies $\mathscr{F}$ has a unique fixed point $(z, \theta) \in X(N, T)$.
	Finally, due to the smoothness of $(z, \theta)$, we can easily verify $(z, \theta)$ is a unique classical solution to
	Equations (\ref{EQUATION_NONLINEAR_PLATE_EQUATION_TRANSFORMED_NEW_NONLINEARITY_PDE_1}),
	(\ref{EQUATION_NONLINEAR_PLATE_EQUATION_TRANSFORMED_PDE_2})--(\ref{EQUATION_NONLINEAR_PLATE_EQUATION_TRANSFORMED_IC})
	at the energy level $s$. \medskip \\
	\noindent {\it Step 5: Continuation to the maximal interval.}
	Observing that
	$z(T, \cdot), z_{t}(T, \cdot)$ and $\theta(T, \cdot)$ satisfy the regularity and compatibility assumptions
	and carrying out the standard continuation argument,
	we obtain a maximal interval $[0, T^{\ast}_{J})$ for which the classical solution (uniquely) exists.
	Due to the interval's maximality, unless $T_{J}^{\ast} = \infty$, we have
	\begin{equation}
		\sum_{k = 0}^{s} \big\|\partial_{t}^{k} z(t, \cdot)\big\|_{H^{s - k}(\Omega)}^{2} +
		\sum_{k = 0}^{s-2} \big\|\partial_{t}^{k} \theta(t, \cdot)\big\|_{H^{s + 1 - k}(\Omega)}^{2} +
		\big\|\partial_{t}^{s-1} \theta(t, \cdot)\big\|_{H^{1}(\Omega)}^{2} \to \infty \text{ as } t \nearrow T^{\ast}_{J}.
		\label{EQUATION_BLOW_UP_AT_T_AST_NAIVE}
	\end{equation}
	\noindent {\it Step 6: Returning to the original system.}
	By virtue of Sobolev's imbedding theorem,
	the function $a \circ z$ is continuous on $[0, T^{\ast}_{J}) \times \bar{\Omega}$.
	Hence, the number
	\begin{equation}
		T_{\mathrm{max}, J} :=
		\left\{\begin{array}{cl}
			T^{\ast}_{J}, & \text{if } \hat{a} \circ z \equiv a \circ z \text{ in } [0, T^{\ast}) \times \bar{\Omega}, \\
			\min\big\{t \in [0, T^{\ast}) \,\big|\,
			a(t, x) \not \in \operatorname{int}(J) \text{ for } x \in \bar{\Omega}\big\}, & \text{otherwise}
		\end{array}
		\right.
		\notag
	\end{equation}
	is well-defined and positive by Equation (\ref{EQUATION_SET_J_NONTRIVIAL}).
	Denote by $(z_{J}, \theta_{J})$ the unique classical solution to (\ref{EQUATION_NONLINEAR_PLATE_EQUATION_TRANSFORMED_NEW_NONLINEARITY_PDE_1}),
	(\ref{EQUATION_NONLINEAR_PLATE_EQUATION_TRANSFORMED_PDE_2})--(\ref{EQUATION_NONLINEAR_PLATE_EQUATION_TRANSFORMED_IC})
	restricted onto $[0, T_{\mathrm{max}, J})$.
	Consider now an increasing sequence $(J_{n})_{n \in \mathbb{N}}$ of closed sets satisfying Equation (\ref{EQUATION_SET_J_NONTRIVIAL}) such that
	\begin{equation}
		T_{\mathrm{max}, J_{n}} \nearrow T_{\mathrm{max}} := \sup\big\{T_{\mathrm{max}, J} \,\big|\, J \text{ satisfies Equation } (\ref{EQUATION_SET_J_NONTRIVIAL})\big\} \text{ as } n \to \infty.
		\label{EQUATION_DEFINITION_OF_T_MAX}
	\end{equation}
	By construction, $(z_{J_{n}}, \theta_{J_{n}})$ solves the original problem 
	(\ref{EQUATION_NONLINEAR_PLATE_EQUATION_TRANSFORMED_PDE_1})--(\ref{EQUATION_NONLINEAR_PLATE_EQUATION_TRANSFORMED_IC}) on $[0, T_{\mathrm{max}, J_{n}})$ and
	\begin{equation}
		(z_{J_{m}}, \theta_{J_{m}}) \equiv (z_{J_{n}}, \theta_{J_{n}}) \text{ on } [0, T_{\mathrm{max}, J_{m}}) \text{ for } m, n \in \mathbb{N} \text{ with } m \leq n. \notag
	\end{equation}
	Hence, letting for $t \in [0, T_{\mathrm{max}})$,
	\begin{equation}
		(z, \theta)(t) := (z_{J_{n}}, \theta_{J_{n}})(t) \text{ for any } n \in \mathbb{N} \text{ such that } T_{\mathrm{max}, J_{n}} > t, \notag
	\end{equation}
	we oberve $(z, \theta)$ uniquely defines a classical solution to (\ref{EQUATION_NONLINEAR_PLATE_EQUATION_TRANSFORMED_PDE_1})--(\ref{EQUATION_NONLINEAR_PLATE_EQUATION_TRANSFORMED_IC})
	on $[0, T_{\mathrm{max}})$.
	Moreover, unless $T_{\mathrm{max}} = \infty$, we have Equation (\ref{EQUATION_BLOW_UP_AT_T_MAX}) and/or Equation (\ref{EQUATION_ELLIPTICITY_VIOLATION_AT_T_MAX}).
	Indeed, if neither was the case, we could redefine $J_{0}$ from Equation (\ref{EQUATION_IMAGE_OF_DOMAIN_OMEGA_WRT_Z_NOUGHT}) via
	\begin{equation}
		J_{0} := \big[\min_{x \in \bar{\Omega}} a(z(T_{\mathrm{max}}, x)), \max_{x \in \bar{\Omega}} a(z(T_{\mathrm{max}}, x))\big] \notag
	\end{equation}
	and repeat Step 5 to obtain a classical solution $(z_{J}, \theta_{J})$ existing beyond $T_{\mathrm{max}}$,
	which would contradict Equation (\ref{EQUATION_DEFINITION_OF_T_MAX}).
	The overall uniqueness follows similar to Step 4.
\end{proof}

\begin{remark}
	Equation (\ref{EQUATION_BLOW_UP_AT_T_MAX}) is equivalent to
	\begin{equation}
		\big\|z(t, \cdot)\big\|_{H^{s}(\Omega)}^{2} +
		\big\|z(t, \cdot)\big\|_{H^{s-1}(\Omega)}^{2} \to \infty \text{ as } t\nearrow T_{\mathrm{max}}.
		\notag
	\end{equation}
	Indeed, arguing by contradiction, if the norms in Equation (\ref{EQUATION_BLOW_UP_AT_T_MAX}) are bounded,
	Equations (\ref{EQUATION_NONLINEAR_PLATE_EQUATION_TRANSFORMED_PDE_1}), (\ref{EQUATION_NONLINEAR_PLATE_EQUATION_PDE_2})
	suggest the derivatives of $z$ and $\theta$ as well as $\theta$ itself are bounded in respective topologies,
	which contradicts the maximality of $T_{\mathrm{max}}$.
\end{remark}

\section{Proof of Theorems \ref{w_global_wellposedness} and \ref{w_uniform_stability}: Global Existence and Long-Time Behavior} \label{SECTION_LONG_TIME_BEHAVIOR}
In this section, we will show the global existence and the exponential stability of the local solution to
Equations \eqref{EQUATION_INTRODUCTION_KIRCHHOFF_LOVE_PDE_1}--\eqref{EQUATION_INTRODUCTION_KIRCHHOFF_LOVE_IC} 
(or, equivalently, \eqref{EQUATION_NONLINEAR_PLATE_EQUATION_PDE_1}--\eqref{EQUATION_NONLINEAR_PLATE_EQUATION_IC})
established in Theorem \ref{THEOREM_LOCAL_EXISTENCE} 
provided the initial data are `small.'
Motivated by \cite{LaMaSa2008} (cf. Equation (\ref{EQUATION_QUASILINEAR_THERMOELASTIC_PLATE_NO_ROTATIONAL_MOMENTUM_PDE_1})),
we consider here a particular nonlinearity:
\begin{equation} \label{definition_F}
	F(z, \nabla z, Az) \equiv \omega A (z^3) = \omega \big(-3z^2Az + 6z|\nabla z|^2\big).
\end{equation} 
For the sake of convenience, without loss of generality, we assume $\omega = 1$.
Hence, the system we study becomes:
\begin{subequations}
\begin{align}
	\big(A^{-1} + \gamma\big) z_{tt} + A z - \alpha A \theta &= -3z^2Az+6z|\nabla z|^2 \phantom{0} &&\text{ in } (0, \infty) \times \Omega, 
	\label{system_stability_1} \\
	\beta \theta_{t} + \eta A \theta + \sigma \theta + \alpha z_{t} &= 0\phantom{f(z, \nabla z)} &&\text{ in } (0, \infty) \times \Omega, 
	\label{system_stability_2} \\
	z = \theta &= 0\phantom{f(z, \nabla z)} &&\text{ in } (0, \infty) \times \partial \Omega, 
	\label{system_stability_3} \\
	z(0, \cdot) = z^{0}, \quad z_{t}(0, \cdot) = z^{1}, \quad \theta(0, \cdot) &= \theta^{0}\phantom{f(z, \nabla z} && \text{ in } \Omega.
	\label{system_stability_4}
\end{align}
\end{subequations}
Under Assumption \ref{ASSUMPTION_LOCAL_EXISTENCE}, Theorem \ref{THEOREM_LOCAL_EXISTENCE} establishes the local existence of a unique classical solution 
to Equations \eqref{system_stability_1}--\eqref{system_stability_4} at the energy level $s \geq 3$:
\begin{equation} \label{s_spaces}
	\begin{aligned}
		z &\in \Big(\bigcap_{m = 0}^{s - 1} C^{m}\big([0, T_{\max}), H^{s - m}(\Omega) \cap H^{1}_{0}(\Omega)\big)\Big) \cap
		C^{s}\big([0, T_{\mathrm{max}}), L^{2}(\Omega)\big) \\
		&\hspace{3in} =: C^{0}\big([0, T_{\mathrm{max}}), \calZ_s\big), \\
		\theta &\in \Big(\bigcap_{k = 0}^{s - 2} C^{k}\big([0, T_{\max}), H^{s + 1 - k}(\Omega) \cap H^{1}_{0}(\Omega)\big)\Big) \cap
		C^{s - 1}\big([0, T_{\mathrm{max}}), H^{1}(\Omega)\big) \\
		&\hspace{3in} =: C^{0}\big([0, T_{\mathrm{max}}), \calT_s\big), 
	\end{aligned} 
\end{equation}
where $[0, T_{\max})$ is the maximal existence interval (in time) with $T_{\max} \leq \infty$.
Unless $T_{\mathrm{max}} = \infty$, either the solution norm explodes
or the hyperbolicity of Equation (\ref{system_stability_1}) is violated at $T_{\mathrm{max}}$.

For the solution pair $(z, \theta)$, we introduce the squared norm functionals 
$E_{k}(t)$ 
for $k = 1, 2, 3$ and $0 \leq t < T_{\max}$
\begin{align}
	E_1(t) &:=\tfrac{1}{2}\ltwo{A^{-\sfrac{1}{2}}z_t(t,\cdot)}^2 \hspace{-2pt} + \hspace{-2pt} \tfrac{\gamma}{2}\ltwo{z_t(t,\cdot)}^2 
	\hspace{-2pt} + \hspace{-2pt} \tfrac{1}{2}\ltwo{A^{\sfrac{1}{2}}z(t,\cdot)}^2 \hspace{-2pt} + \tfrac{1}{2}\ltwo{A^{\sfrac{1}{2}}\theta(t,\cdot)}^2, \label{E_1_definition} \\
	E_2(t) &:=\tfrac{1}{2}\ltwo{z_t(t,\cdot)}^2+\tfrac{\gamma}{2}\ltwo{A^{1/2}z_t(t,\cdot)}^2 + \tfrac{1}{2}\ltwo{Az(t,\cdot)}^2+\tfrac{1}{2}\ltwo{A\theta(t,\cdot)}^2, \label{E_2_definition}\\
	E_3(t) &:=\tfrac{1}{2}\ltwo{z_{tt}(t,\cdot)}^2 \hspace{-2pt}+\hspace{-2pt} \tfrac{\gamma}{2}\ltwo{A^{1/2}z_{tt}(t,\cdot)}^2 \hspace{-2pt}+\hspace{-2pt} \tfrac{1}{2}\ltwo{Az_t(t,\cdot)}^2 
	\hspace{-2pt}+\hspace{-2pt} \tfrac{1}{2}\ltwo{A\theta_t(t,\cdot)}^2 	\label{E_3_definition}
\end{align}
and define the natural energy at level $s = 3$ by means of
\begin{equation}
	X(t) \equiv \big\|(z, z_{t}, z_{tt}, \theta, \theta_{t})(t, \cdot)\big\|_{X}^{2} := E_2(t)+E_3(t).  \label{X_definition}
\end{equation}
Note that $E_{1}(t)$ represents the basic `natural' energy of the system.
For the sake of brevity -- and slighly abusing the notation -- we will write in the following:
\begin{equation}
	\big\|(z, \theta)\big\|_{X} \text{ instead of } \big\|(z, z_{t}, z_{tt}, \theta, \theta_{t})\big\|_{X} \text{ and }
	\big\|(z, \theta)\big\|_{\mathcal{Z}_{s} \times \mathcal{T}_{s}} \text{ instead of } \big\|(\partial_{t}^{\leq s} z, \partial_{t}^{\leq s - 1} \theta)\big\|_{X}, \notag
\end{equation}
where $\partial_{t}^{\leq k} := (1, \partial_{t}, \dots, \partial_{t}^{k})$.


\begin{remark} \label{equiv_energy_spaces_rmk}
	In order to prove the system is globally well-posed, we first seek for an {\em a priori} estimate for the solution and then prove $T_{\max} = \infty$. 
	To capture the essential decay of the energy, we work with $\|(z, \theta)\|_{X}$, instead of $\|(z, \theta)\|_{\calZ_s \times \calT_s}$, for most part of this section. 
	Although all main results in this section are presented in terms of $X(t)$, their equivalence with the statements in Section \ref{SECTION_MAIN_RESULTS}, given the smallness of the initial data, 
	is shown in Lemma \ref{equivalent_norms}.
\end{remark}

We start by an observation that, for small data, $z$ has one extra order of hidden regularity in space encoded in the definition of $E_2$.

\begin{lemma}[$z$-energy boost] 	\label{energy_boost} 
	For any $t \in (0,T_{\max})$, if $(z,\theta)$ satisfies
	\begin{equation} \label{small_data_for_boost}
		E_2(t) < \epsilon_1 := \tfrac{1}{2\sqrt{C'}} \text{ for some constant } C' > 0 \mbox{ (defined in \eqref{def_C'})},
	\end{equation} 
	there holds
	\begin{equation}
		\|z(t)\|^2_{H^3(\Omega)}  \leq C \big(X(t) + X^{3}(t)\big) \text{ for some } C > 0. \notag
	\end{equation}
\end{lemma}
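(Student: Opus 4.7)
The plan is to exploit the quasilinear elliptic structure of the displacement equation \eqref{system_stability_1} together with the parabolic smoothing in \eqref{system_stability_2} to lift one extra space derivative onto $z$. Collecting the leading elliptic term on one side, I would first rewrite \eqref{system_stability_1} as the pointwise identity
\[
(1+3z^{2})\,Az \;=\; -(A^{-1}+\gamma)z_{tt} + \alpha A\theta + 6z\,|\nabla z|^{2} \;=:\; g.
\]
The smallness hypothesis $E_{2}(t)<\epsilon_{1}$, combined with the Sobolev embedding $H^{2}(\Omega)\hookrightarrow L^{\infty}(\Omega)$ (valid in $d\le 3$), yields $\|z(t,\cdot)\|_{L^{\infty}}\le C\sqrt{E_{2}(t)}$; taking $\epsilon_{1}$ small pushes the coefficient $1+3z^{2}$ pointwise into $[1,\tfrac{3}{2}]$, so one can solve for $Az$ and apply elliptic regularity of the Dirichlet realization of $A$ (available since $\partial\Omega\in C^{s}$ with $s\ge 3$) to reduce the claim to bounding $\|Az\|_{L^{2}}$ and $\|\nabla(Az)\|_{L^{2}}$.

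The $L^{2}$ bound is immediate from the definition of $E_{2}$: $\|Az\|_{L^{2}}^{2}\le 2\,E_{2}(t)\le 2X(t)$. For the gradient I would differentiate the identity above, obtaining
\[
(1+3z^{2})\,\nabla(Az) \;=\; \nabla g - 6z\,(\nabla z)\,Az,
\]
and estimate each piece of $\nabla g$ in turn. The $z_{tt}$-terms pose no difficulty: $\|\nabla(A^{-1}z_{tt})\|_{L^{2}}\le C\|z_{tt}\|_{L^{2}}$ since $A^{-1}$ maps $L^{2}$ into $H^{2}\cap H^{1}_{0}$, and $\gamma\|\nabla z_{tt}\|_{L^{2}}\le C\|A^{1/2}z_{tt}\|_{L^{2}}$ directly from the definition of $E_{3}$; both are controlled by $C\sqrt{E_{3}}\le C\sqrt{X}$. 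The key observation for the $\theta$-term is that the heat equation \eqref{system_stability_2} itself supplies the missing regularity: rewriting $\eta A\theta=-\beta\theta_{t}-\sigma\theta-\alpha z_{t}$ and taking one spatial gradient yields $\|\nabla(A\theta)\|_{L^{2}}\le C(\|\nabla\theta_{t}\|_{L^{2}}+\|\nabla\theta\|_{L^{2}}+\|\nabla z_{t}\|_{L^{2}})\le C\sqrt{X}$, with each summand controlled by $E_{2}$ or $E_{3}$ via Poincar\'e's inequality and the equivalence $\|A^{1/2}u\|_{L^{2}}=\|\nabla u\|_{L^{2}}$ on $H^{1}_{0}(\Omega)$.

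The remaining term $\nabla(6z|\nabla z|^{2})$, together with the cross-term $6z(\nabla z)\,Az$ on the left, produces the genuinely nonlinear contributions. The cubic piece $|\nabla z|^{2}\nabla z$ is bounded in $L^{2}$ via $H^{1}\hookrightarrow L^{6}$ (valid in $d\le 3$) by $\|\nabla z\|_{L^{6}}^{3}\le C\|z\|_{H^{2}}^{3}\le C\,X^{3/2}$, squaring to the $X^{3}$ term in the claim. The mixed terms $z(\nabla z)(\nabla^{2}z)$ and $z(\nabla z)Az$ are the main obstacle: a H\"older bound by $\|z\|_{L^{\infty}}\|\nabla z\|_{L^{\infty}}\|\nabla^{2}z\|_{L^{2}}$ involves $\|\nabla z\|_{L^{\infty}}$, which in $d=3$ requires $H^{3}$-control of $z$, that is, exactly what we are trying to estimate. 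The rescue is that the resulting bound is $C\|z\|_{H^{2}}^{2}\|z\|_{H^{3}}\le C\,E_{2}\,\|z\|_{H^{3}}$ with a small prefactor; squaring produces $C'E_{2}^{2}\,\|z\|_{H^{3}}^{2}$ with $C'$ the constant defining $\epsilon_{1}=1/(2\sqrt{C'})$, so that altogether
\[
\|z(t,\cdot)\|_{H^{3}(\Omega)}^{2}\;\le\;C\bigl(X(t)+X(t)^{3}\bigr)+C'\,E_{2}(t)^{2}\,\|z(t,\cdot)\|_{H^{3}(\Omega)}^{2}.
\]
Since $E_{2}(t)<\epsilon_{1}$ forces $C'E_{2}^{2}<\tfrac{1}{4}$, the last term can be absorbed into the left-hand side, yielding the claim.
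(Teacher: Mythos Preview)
Your proposal is correct and follows essentially the same strategy as the paper: isolate $(1+3z^{2})Az$ from \eqref{system_stability_1}, take one spatial derivative, use the heat equation \eqref{system_stability_2} to lift $A\theta$ to $H^{1}$, and absorb the nonlinear remainder carrying the small prefactor $C'E_{2}^{2}$. The only difference is that you differentiate with the local operator $\nabla$ and then invoke elliptic regularity $\|z\|_{H^{3}}\le C\|Az\|_{H^{1}}$, whereas the paper applies $A^{1/2}$ directly and works with $\|A^{3/2}z\|_{L^{2}}$; your route has the minor advantage of avoiding the paper's formal Leibniz identity for $A^{1/2}$.
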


\begin{proof}

From Equation \eqref{system_stability_2}, $-\eta A^{3/2}\theta = \alpha A^{1/2} z_t + \beta A^{1/2} \theta_t + \sigma A^{1/2} \theta$. 
Hence,
\begin{equation} \label{A_theta_prio}
	\ltwo{A^{3/2}\theta}^2 \leq C\big(\ltwo{A^{1/2} z_t}^2 + \ltwo{A^{1/2} \theta_t}^2 + \ltwo{A^{1/2} \theta}^2\big) \leq C\big(E_2(t)+E_3(t)\big).
\end{equation}
Using H\"{o}lder's inequality
\begin{equation} 
\| a \cdot b\|_{L^2(\Omega)}^2 \leq \| a \|_{L^6(\Omega)}^{2} \cdot \|b\|_{L^3(\Omega)}^{2} \notag
\end{equation}
and Sobolev imbedding theorem $H^1(\Omega) \hookrightarrow L^6(\Omega) \hookrightarrow L^4(\Omega) \hookrightarrow L^3(\Omega)$, we arrive at:
\begin{equation} \label{A_2_prio}
\begin{aligned}
	\big\|A^{1/2}&(6z|\nabla z|^2)\big\|_{L^{2}(\Omega)}^2 \\
	&\leq  C\ltwo{A^{1/2}z \cdot |\nabla z|^2}^2+C\ltwo{z (\nabla z \cdot \nabla A^{1/2}z)}^2 \\
	&\leq  C\|A^{1/2}z\|_{L^{6}(\Omega)}^2 \| \nabla z|\|_{L^{6}(\Omega)}^4+C'\|Az\|_{_{L^{2}(\Omega)}}^2 \|A^{3/2} z\|_{L^2(\Omega)}^2 \ltwo{ A z}^2 \\
	&\leq  C E_2^3(t)+C'E_2^2(t) \|A^{3/2} z\|_{L^2(\Omega)}^2
\end{aligned}
\end{equation}
and
\begin{equation} \label{A_3_prio}
\begin{aligned}
	\ltwo{ A^{1/2}(1+3  z^2) \cdot Az}^2
	& \leq C\ltwo{ z A^{1/2} z \cdot Az} ^2  \\
	& \leq C \|z\|_{H^2(\Omega)}^2  \|A^{1/2} z\|_{H^2(\Omega)}^2 \ltwo{Az}^2  \\
	& \leq C'\|Az\|_{L^{2}(\Omega)}^4 \|A^{3/2} z\|_{L^2(\Omega)}^2   \\
	& \leq C'E_2^2(t) \|A^{3/2} z\|_{L^2(\Omega)}^2 \text{ for some } C, C' > 0.
\end{aligned}
\end{equation}
Now, to estimate $\|A^{3/2} z\|_{L^2(\Omega)}^2$, we successively transform Equation \eqref{system_stability_1} to obtain:
\begin{eqnarray*}
	(1+3  z^2)Az &=& -\left[A^{-1}z_{tt}+\gamma z_{tt} - \alpha A\theta - 6  z|\nabla z|^2\right], \\
	A^{1/2}[(1+3  z^2)Az] &=& -A^{1/2}\left[A^{-1}z_{tt}+\gamma z_{tt} - \alpha A\theta - 6  z|\nabla z|^2\right], \\
	A^{1/2}(1+3  z^2) \cdot Az+(1+3  z^2)A^{3/2}z&=& -A^{1/2}\left[A^{-1}z_{tt}+\gamma z_{tt} - \alpha A\theta - 6  z|\nabla z|^2\right], \\
	(1+3  z^2)A^{3/2}z&=& -A^{1/2}\left[A^{-1}z_{tt}+\gamma z_{tt} - \alpha A\theta - 6  z|\nabla z|^2\right], \\
	& & -A^{1/2}(1+3  z^2) \cdot Az, \\
	A^{3/2}z&=& -\tfrac{1}{1+3  z^2}\Big[A^{-1/2} z_{tt}+ A^{1/2}\gamma z_{tt}-\alpha A^{3/2}\theta \\
	& & -A^{1/2}(6  z|\nabla z|^2) + A^{1/2}(1+3  z^2) \cdot Az \Big].
\end{eqnarray*}
Taking into account $\frac{1}{1+3  z^2} \leq 1$, Equations \eqref{A_theta_prio}, \eqref{A_2_prio} and \eqref{A_3_prio} yield
\begin{align}
	\ltwo{A^{3/2}z}^2
	&\leq
	\left\|\tfrac{1}{1+3  z^2}\right\|_{L^{\infty}(\Omega)}^2 \Big[\ltwo{A^{-1/2} z_{tt}}+\gamma \ltwo{ A^{1/2} z_{tt}}+\alpha \ltwo{A^{3/2}\theta} 	\nonumber	\\
	&+\ltwo{A^{1/2}(6z|\nabla z|^2)} +\ltwo{ A^{1/2}(1+3  z^2) \cdot Az} \Big]^2 	\nonumber	\\[2mm]
	&\leq
	C \Big[\ltwo{A^{-1/2} z_{tt}}^2+\ltwo{ A^{1/2}\gamma z_{tt}}^2+\ltwo{A^{3/2}\theta}^2 	\nonumber	\\
	&+\ltwo{A^{1/2}(6z|\nabla z|^2)}^2 +\ltwo{ A^{1/2}(1+3z^2) \cdot Az}^2 \Big] 	\nonumber	\\[2mm]
	&\leq
	C \Big[E_3(t)+E_3(t)+[E_2(t)+E_3(t)] 	\nonumber	\\
	&+ C E_2^3(t)+C'E_2^2(t) \|A^{3/2} z\|_{L^2(\Omega)}^2 + C'E_2^2(t) \|A^{3/2} z\|_{L^2(\Omega)}^2 \Big]. 		\label{def_C'}
\end{align}
Hence,
\begin{equation} 
	\ltwo{A^{3/2}z}^2 
	\leq 
	\tfrac{C}{1-2E_2^2(t) C'} \big[E_2(t)+E_3(t) + E_2^3(t)\big]. \notag
\end{equation}
By the assumption in Equation \eqref{small_data_for_boost}, $1-2E_2^2(t) C' > \frac{1}{2}$ and, therefore,
\begin{equation} \label{z_H3}
	\ltwo{A^{3/2}z}^2 
	\leq 
	C \big[E_2(t)+E_3(t) + E_2^3(t)\big] 
	\leq 
	C \big[X(t) + X^3(t)\big], \notag
\end{equation}
which finishes the proof.
\end{proof}

\begin{lemma}[\emph{A priori} estimate]  
	\label{lemma_barrier}
	Let Assumption \ref{ASSUMPTION_LOCAL_EXISTENCE} be satisfied for some $s \geq 3$ and $0 < T < T_{\max}$ with $T_{\max} > 0$ denoting the maximal existence time from Theorem \ref{THEOREM_LOCAL_EXISTENCE}
	and let
	\begin{equation}
		\label{small_than_one}
		X(0)<1.
	\end{equation}
	Then,
	\begin{equation} 
		\label{barrier_bound_final}
		X_s(T)+\int_0^T X_s(t) \dt \leq C_1 X_s(0) + C_2 \sum_{i \in I} X_s^{\alpha_i}(T)+ C_3\sum_{j \in J} \int_0^T X_s^{\beta_j}(t) \dt
	\end{equation}
	where $C_k \geq 0$, $k=1,2,3$ are constants, both $I, J \subset \mathbb{N}$ are finite sets, $\alpha_i>1$ for any $i \in I$ and $\beta_j>1$ for any $j \in J$.
\end{lemma}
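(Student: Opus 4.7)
My plan is to derive a family of energy identities for Equations \eqref{system_stability_1}--\eqref{system_stability_2} by differentiating in time and applying powers of $A^{1/2}$, combine them into an integrated estimate via a dissipation-compensation argument, and finally control the nonlinearity with the $H^{3}$-boost from Lemma \ref{energy_boost}. Concretely, for every admissible pair $(j,k)$ with $j+k\le s$, I would apply $A^{j/2}\partial_{t}^{k}$ to \eqref{system_stability_1}--\eqref{system_stability_2}, test the mechanical equation against $A^{j/2}\partial_{t}^{k+1}z$ and the thermal equation against $A^{(j+2)/2}\partial_{t}^{k}\theta$, and integrate over $\Omega$. The Dirichlet boundary conditions $z=\theta=0$ on $\partial\Omega$ make all boundary contributions from the integrations by parts vanish, and the coupling terms cancel between the two identities. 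This yields a hierarchy of equalities of the schematic form
\begin{equation*}
\tfrac{d}{dt}\tilde E_{j,k}(t)\;+\;\eta\,\|A^{(j+2)/2}\partial_{t}^{k}\theta(t)\|_{L^{2}(\Omega)}^{2}\;+\;\sigma\,\|A^{(j+1)/2}\partial_{t}^{k}\theta(t)\|_{L^{2}(\Omega)}^{2}\;=\;\langle R_{j,k}(t),\,A^{j/2}\partial_{t}^{k+1}z(t)\rangle_{L^{2}(\Omega)},
\end{equation*}
where $\tilde E_{j,k}$ is an $E$-type functional generalizing \eqref{E_1_definition}--\eqref{E_3_definition} and $R_{j,k}$ collects contributions from the mixed derivatives of $-3z^{2}Az+6z|\nabla z|^{2}$. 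Summing with suitable positive weights and integrating over $[0,T]$ produces $X_{s}(T)$ on the left and $C_{1}X_{s}(0)$ on the right.

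The main obstacle is producing the integrated mechanical dissipation $\int_{0}^{T}X_{s}(t)\,dt$, since the identities above only deliver decay of the thermal component. I would address this with an auxiliary multiplier: testing $A^{j/2}\partial_{t}^{k}$ applied to \eqref{system_stability_1} against $A^{j/2}\partial_{t}^{k}z$ and integrating over $[0,T]$ yields
\begin{equation*}
\int_{0}^{T}\|A^{(j+1)/2}\partial_{t}^{k}z(t)\|_{L^{2}(\Omega)}^{2}\,dt\leq C\bigl(\tilde E_{j,k}(0)+\tilde E_{j,k}(T)\bigr)+C\int_{0}^{T}\bigl(\|A^{j/2}\partial_{t}^{k+1}z\|_{L^{2}(\Omega)}^{2}+\|A^{(j+2)/2}\partial_{t}^{k}\theta\|_{L^{2}(\Omega)}^{2}\bigr)\,dt+(\mathrm{nonlin}),
\end{equation*}
after which the $\partial_{t}^{k+1}z$ term on the right is exchanged for thermal data through the thermal equation \eqref{system_stability_2}, solved as $\alpha\partial_{t}^{k+1}z=-\beta\partial_{t}^{k+1}\theta-\eta A\partial_{t}^{k}\theta-\sigma\partial_{t}^{k}\theta$. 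Iterating this compensation across all levels $(j,k)$ absorbs every missing mechanical piece into thermal dissipation that is already controlled, so that a suitably weighted combination of all identities produces a single integrated inequality carrying $X_{s}(T)+\int_{0}^{T}X_{s}(t)\,dt$ on the left, $C_{1}X_{s}(0)$ from initial-data contributions, and a remainder made of the nonlinear terms accumulated from the various $R_{j,k}$.

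To close into the form \eqref{barrier_bound_final}, the nonlinear remainders must be estimated. Each time-space derivative of $-3z^{2}Az+6z|\nabla z|^{2}$ is a polynomial in $\{\partial_{t}^{a}\nabla^{b}z\colon a+b\le s+1\}$ homogeneous of degree at least two in $z$ and its derivatives. Using H\"older's inequality, the Sobolev embeddings $H^{2}(\Omega)\hookrightarrow L^{\infty}(\Omega)$ and $H^{1}(\Omega)\hookrightarrow L^{6}(\Omega)$ available for $d\in\{2,3\}$, and Lemma \ref{energy_boost} to upgrade $z$ to $H^{3}(\Omega)$ (its hypothesis $E_{2}(t)<\epsilon_{1}$ following from $X(0)<1$ through a standard continuity argument that is closed later by \eqref{barrier_bound_final} itself), every term is dominated pointwise by $CX_{s}^{\alpha}(t)$ with $\alpha>1$ or, under the time integral, by $CX_{s}^{\beta}(t)$ with $\beta>1$. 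Inserting these bounds yields \eqref{barrier_bound_final}. The principal technical subtlety is organizing the chain of compensations so that every linear-order deficit is absorbed on the left and only genuinely superlinear residuals remain; particular care is required at the highest level $j+k=s$, where no higher-order identity exists to absorb the $\partial_{t}^{k+1}z$-terms generated by the auxiliary multiplier, forcing one to rely on the specific structure of \eqref{system_stability_2} to recast them as thermal quantities, and careful bookkeeping of the $L^{2}$/$H^{-1}$-mismatch introduced by the pseudodifferential factor $(A^{-1}+\gamma)$ in the kinetic term.
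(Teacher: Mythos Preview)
Your proposal is correct and structurally parallel to the paper's proof: both derive energy identities at successive levels, recover the integrated mechanical dissipation from the thermal coupling, and close by estimating the cubic nonlinearity via H\"older, the Sobolev embeddings $H^{2}\hookrightarrow L^{\infty}$ and $H^{1}\hookrightarrow L^{6}$, and the $H^{3}$-boost of Lemma~\ref{energy_boost}. The paper, however, does not run a uniform $(j,k)$-hierarchy with $A^{j/2}\partial_{t}^{k}$; it works concretely at the three levels $E_{1},E_{2},E_{3}$ (the last obtained by one time-differentiation of the system) and uses a slightly different compensation mechanism. Where you propose the virial multiplier $A^{j/2}\partial_{t}^{k}z$ on the mechanical equation and then substitute the resulting $\partial_{t}^{k+1}z$-term via \eqref{system_stability_2}, the paper instead multiplies the \emph{thermal} equation by $Az_{t}$ to produce $\int_{0}^{T}\|A^{1/2}z_{t}\|^{2}$ directly (the $\theta_{t}$-contribution being integrated by parts in time and $z_{tt}$ replaced via the mechanical equation through $B=(A^{-1}+\gamma)^{-1}$), and separately multiplies the mechanical equation by $Az$ to obtain $\int_{0}^{T}\|Az\|^{2}$. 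Both routes arrive at the same integrated inequality \eqref{X_goal}; yours is more uniform and scales transparently to general $s$, while the paper's is more explicit and sidesteps fractional powers of $A$ acting on the nonlinearity. One small point: the paper makes your continuity hypothesis $E_{2}(t)<\epsilon_{1}$ explicit by assuming at the outset (shrinking $T$ if necessary) that $X(t)<1$ on $[0,T]$, cf.\ \eqref{small_than_one_forall}; you should state this reduction rather than defer it to a bootstrap.
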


\begin{proof}
	The proof mainly consists of energy estimates at the energy levels $s = 1, 2$ and $3$. 
	Estimates for higher energy spaces ($s \geq 4$) follow similarly. 
	Throughout this proof, $\inner{\cdot, \cdot}$ denotes the standard $L^2(\Omega)$-inner product. Moreover, without loss of generality, we assume
	\begin{equation}
		\label{small_than_one_forall}
		X(t) < 1 \mbox{ for } t \in [0,T].
	\end{equation}
	If not, by continuity of $X(t)$, we can find a smaller $T$ so that \eqref{small_than_one_forall} holds true. Equations \eqref{small_than_one} and \eqref{small_than_one_forall} are critical in Step 4.2 of this proof. The dimension of the domain is also important by virtue of, for instance, Equation \eqref{Sobolev_Embeddings_2}.
	\medskip

	\noindent {\it Step 1: Level 1 energy identity.} 
	Multiplying Equation \eqref{system_stability_1} with $z_t$ in $L^2(\Omega)$ and integrating by parts, we get
	\begin{eqnarray}
		\tfrac{1}{2}\partial_{t}\ltwo{A^{-1/2}z_t}^2 + \tfrac{\gamma}{2}\partial_{t} \ltwo{z_t}^2+\tfrac{1}{2}\partial_{t} \ltwo{A^{1/2}z}^2-\alpha \inltwo{A\theta, z_t},
		=\inltwo{F, z_t}   
	\label{level1.1}
	\end{eqnarray}
	where $F(z, \nabla z, Az) = -3z^2Az+6z|\nabla z|^2$ from \eqref{definition_F}.
	Similar actions on \eqref{system_stability_2} multiplied by $A\theta$ lead to
	\begin{eqnarray}
		\tfrac{\beta}{2}\partial_{t} \ltwo{A^{1/2}\theta}^2 + \eta\ltwo{A\theta}^2+\sigma\ltwo{A^{1/2}\theta}^2+\alpha\inltwo{z_t, A\theta}=0. \label{level1.2}
	\end{eqnarray}
	From Equations \eqref{level1.1} and \eqref{level1.2}, we get the  $E_1$-identity:
	\begin{equation} \label{level1ei}
		E_1(T)+\int_0^T \left( \eta \ltwo{A\theta}^2 + \sigma \ltwo{A^{1/2}\theta}^2\right) \dt=E_1(0)+\int_0^T \inltwo{F, z_t} \dt.
	\end{equation}

	\noindent {\it Step 2: Level 2 energy estimate.} Recalling from Equation \eqref{E_2_definition}
	\begin{equation}
		E_2(t) =\tfrac{1}{2}\ltwo{z_t(t,\cdot)}^2+\tfrac{\gamma}{2}\ltwo{A^{1/2}z_t(t,\cdot)}^2+ \tfrac{1}{2}\ltwo{Az(t,\cdot)}^2+\tfrac{1}{2}\ltwo{A\theta(t,\cdot)}^2 \notag
	\end{equation}
	and using Equation \eqref{level1ei} along with Poincar\'{e}-Friedrichs inequality, we get:
	\begin{equation} \label{level1cor1}
		E_1(T) \leq E_1(0)+\int_0^T \inltwo{F, z_t} \dt \leq C\left[E_2(0)+\int_0^T \inltwo{F, z_t} \dt \right].
	\end{equation}
	Similarly,
	\begin{align}  
		\int_0^T \ltwo{A^{1/2}\theta}^2 \dt \leq C \int_0^T \ltwo{A\theta}^2 \dt \leq C\left[E_2(0)+\int_0^T \inltwo{F, z_t} \dt \right], 	\label{level1cor3} \\
		\int_0^T \ltwo{A\theta}^2 \dt \leq C \int_0^T \ltwo{A\theta}^2 \dt \leq C\left[E_2(0)+\int_0^T \inltwo{F, z_t} \dt \right], 	\label{level1Atheta} \\
		\max\Big\{\ltwo{z_t(t,\cdot)}^2 , \ltwo{A^{1/2}z(t,\cdot)}^2\big\}
		\leq CE_1(T) \leq C\left[E_2(0)+\int_0^T \inltwo{F, z_t} \dt \right]. 	\label{level1cor5}
	\end{align}
	%
	%
	

	In order to estimate $E_2(t)$ and $\ds \int_0^T E_2(t) \dt$, we employ a set of higher energy multipliers. 
	We first start by multiplying Equation \eqref{system_stability_1} with $Az_t$ and recalling \eqref{level1cor1} to observe that
	\begin{equation}
		\begin{aligned}
		& \tfrac{1}{2}\ltwo{z_t(T)}^2+\tfrac{\gamma}{2}\ltwo{A^{1/2}z_t(T)}^2+ \tfrac{1}{2}\ltwo{Az(T)}^2 - \alpha \int_0^T \inltwo{A\theta, Az_t} \dt 
\\
		\leq \ &
		C\left[E_2(0)+\int_0^T \inltwo{F, z_t}  \dt\right]
		+
		\int_0^T \inltwo{F, Az_t} \dt.  
		\end{aligned}
		\label{level2.1}
	\end{equation}
	Second, in order to estimate $\ltwo{A\theta(T)}^2$, we multiply \eqref{system_stability_2} by $A\theta_t$ to get
	\begin{align}
	& \dfrac{\eta}{2}\ltwo{A\theta(T)}^2 + \dfrac{\sigma}{2} \ltwo{A^{1/2}\theta(T)}^2 + \beta\int_0^T \ltwo{A^{1/2}\theta_t}^2  \nonumber 	\\
	=\ &
	\dfrac{\eta}{2}\ltwo{A\theta(0)}^2 + \dfrac{\sigma}{2} \ltwo{A^{1/2}\theta(0)}^2 - \alpha \int_0^T \langle A^{1/2}z_t, A^{1/2}\theta_t  \rangle.
	\end{align}
We apply Young's inequality to the inner product term, we get the following estimate
	\begin{align}
	& \dfrac{\eta}{2}\ltwo{A\theta(T)}^2  + \beta\int_0^T \ltwo{A^{1/2}\theta_t}^2  \nonumber 	\\
	\leq\  &
	E_2(0) + \dfrac{\sigma}{2} \ltwo{A^{1/2}\theta(T)}^2 + \alpha \dfrac{\beta}{\alpha}\int_0^T \ltwo{A^{1/2}\theta_t}^2 
	+ 
	C\int_0^T \ltwo{A^{1/2}z_t}^2
	\end{align}
After performing cancellations, employing \eqref{level1cor1}, and rescaling, the estimate becomes
	\begin{align} 	\label{Atheta_estimate_final}
	C\ltwo{A\theta(T)}^2
	\leq
	C\left[E_2(0) + \int_0^T \inltwo{F, z_t} \dt \right] +\epsilon\int_0^T \ltwo{A^{1/2}z_t}^2.
	\end{align}
	Last, by Equation \eqref{system_stability_1}, $z_{tt} = -BAz+\alpha BA\theta+BF$, where $B=(A^{-1}+\gamma)^{-1}$. 
	Multiplying Equation \eqref{system_stability_2} by $Az_t$, we get
	\begin{align}
		-\eta \int_0^T \inltwo{A\theta, Az_t} \dt
		&= \alpha \int_0^T \ltwo{A^{1/2}z_t}^2 \dt + \beta \inltwo{\theta, Az_t}\Big|_0^T +\beta \int_0^T \inltwo{A\theta, BAz} \dt \nonumber \\
		&- \alpha \beta \int_0^T \inltwo{A\theta, BA\theta} \dt - \beta \int_0^T \inltwo{A\theta, BF} \dt+\sigma \int_0^T \inltwo{\theta, Az_t} \dt.  \label{level2.2}
	\end{align}
	Recall from the proof of Theorem \ref{THEOREM_LOCAL_EXISTENCE} that $B$ is a continuous operator.
	Therefore, $\|B\| \leq C_{\gamma}$. Adding \eqref{Atheta_estimate_final} and a multiple of Equation \eqref{level2.2} to \eqref{level2.1}, we have:
	\begin{align}
		\begin{split}
			E_2 (T) &+ \tfrac{\alpha^2}{\eta} \int_0^T \ltwo{A^{1/2}z_t}^2 \dt \leq
			C\left[E_2(0) + \int_0^T \inltwo{F, z_t} \dt \right]
			+
			\int_0^T \inltwo{F, Az_t} \dt \\
			&+
			C_{\epsilon}\ltwo{A^{1/2}\theta(T)}^2+\epsilon \ltwo{A^{1/2}z_t(T)}^2 \\
			&+ C \int_0^T \ltwo{A\theta}^2 \dt 
			+
			C_{\epsilon}\int_0^T \ltwo{A^{1/2}\theta}^2 \dt + \epsilon \int_0^T \ltwo{Az}^2 \dt \\
			&+
			C_{\epsilon}\int_0^T \ltwo{\theta}^2 \dt + \epsilon \int_0^T \ltwo{Az}^2 \dt
			+
			\tfrac{\alpha \beta}{\eta} \int_0^T \inltwo{A\theta, BF} \dt.
		\end{split}
		\label{level2_Aohzt}
	\end{align}
	After merging respective terms and applying Equations \eqref{level1cor1} and \eqref{level1cor3}, \eqref{level2_Aohzt} becomes
	\begin{equation}
		\begin{split}
			(1-\epsilon)E_2(T) &+ \tfrac{\alpha^2}{\eta} \int_0^T \ltwo{A^{1/2}z_t}^2 \dt \\
			&\leq 
			C\left[E_2(0)+\int_0^T \inltwo{F, z_t} \dt + \int_0^T \inltwo{F, Az_t}  \dt  + \int_0^T \inltwo{BF, A\theta} \right] \dt \\
			&+ 2\epsilon \int_0^T E_2(t) \dt, \text{ where } C=C(\epsilon, \alpha, \beta, \gamma, \eta, \sigma). 
		\end{split}
		\label{level2.3}
	\end{equation}
	Third, to estimate $\ds \int_0^T \ltwo{Az}^2 \dt$, we multiply\eqref{system_stability_1} by $Az$ and, again, use \eqref{level1cor1} to get
	\begin{equation}
	 	\begin{split}
			\int_0^T \ltwo{Az}^2 \dt
			&\leq \dfrac{\alpha^2}{2\eta C_{\gamma, \Omega}} \left|\int_0^T \inltwo{F,Az} \dt\right| +\epsilon' \int_0^T E_2(t) \dt  +\epsilon' E_2(T) \\
			&+ C_{\epsilon'}\left[E_2(0)+\int_0^T \inltwo{F, z_t} \dt  \right] +\dfrac{\alpha^2}{2\eta} \int_0^T \ltwo{A^{1/2}z_t}^2 \dt.
		\end{split}
		\label{level2.4}
	\end{equation}
	%
	Finally, after combining Equations \eqref{level1cor3}, \eqref{level2.3} and \eqref{level2.4}, we arrive at
	%
	%
	\begin{equation} \label{L2_F_E}
		\begin{aligned}
 			E_2(T)+C_1 \int_0^T E_2(t) \dt &\leq C_2 E_2(0) + C_3\left\{ 
			\left| \int_0^T \inltwo{F,z_t} \dt \right|
			+\left| \int_0^T \inltwo{F,Az} \dt \right| \right. \\
 			&+ \left.\left| \int_0^T \inltwo{F,Az_t} \dt\right| 
			+\left| \int_0^T \inltwo{BF,A\theta} \dt \right|
			\right\}.
		\end{aligned}
	\end{equation}

	\medskip
	
	\noindent {\it Step 3: Level 3 energy estimate.}
	The 3\textsuperscript{rd} level energy space ($E_3$) is one order higher in time than the 2\textsuperscript{nd} level space ($E_2$).
	Hence, after differentiating Equation \eqref{system_stability_1}--\eqref{system_stability_2} in time
	\begin{subequations}
		\begin{align}
			A^{-1}z_{ttt}+\gamma z_{ttt}+Az_t - \alpha A\theta_t &= \partial_{t}F(z,\nabla z, \triangle z) 	 &&\text{ in } (0, \infty) \times \Omega,	\label{sys1_level3}\\
			\beta \theta_{tt} + \eta A\theta_t - \sigma \theta_t  + \alpha z_{tt} &= 0 	 &&\text{ in } (0, \infty) \times \Omega,	 \label{sys2_level3} \\
			z=z_t=\theta=\theta_t &=0 	 &&\text{ in } (0, \infty) \times \partial\Omega, \label{boundary_condition_1_level3} 
		\end{align}
	\end{subequations}
	a procedure similar to Step 2 can be employed.
	Denote the right-hand side of \eqref{sys1_level3} by
	\begin{equation}    \label{definition_G}
		G(z) \equiv \partial_{t} F=\partial_{t} [-3z^2Az+6z|\nabla z|^2]=-6zz_tAz-3z^2Az_t+6z_t|\nabla z|^2 +12z (\nabla z \cdot \nabla z_t)
	\end{equation} 
	and calculate
	\begin{equation}     
		\label{definition_G_t}
		\begin{aligned}
			\partial_{t} G(z)
			&= -6z_t^2Az-6zz_{tt}Az-12z z_t A z_t -3z^2 Az_{tt} \\
			&+ 6z_{tt} |\nabla z|^2+24z_t(\nabla z \cdot \nabla z_t) +12z|\nabla z_t|^2+12z(\nabla z_t \cdot \nabla z_{tt}).
		\end{aligned}
	\end{equation} 
	Letting $\widetilde{z}=z_t$ and $\widetilde{\theta}=\theta_t$, we have
	\begin{equation}
		E_3(t) = \tfrac{1}{2}\ltwo{\widetilde{z}_t}^2 + \tfrac{\gamma}{2}\ltwo{A^{1/2}\widetilde{z}_t}^2 + \tfrac{1}{2}\ltwo{A\widetilde{z}}^2 + \tfrac{1}{2}\ltwo{A\widetilde{\theta}}^2. \notag
	\end{equation}
	Therefore, in a fashion similar to Equation \eqref{L2_F_E}, we get the following 3\textsuperscript{rd} level energy estimate:
	\begin{equation} 
		\label{L3_F_E}
		\begin{aligned}
			E_3(T) + C_1 \int_0^T E_3(t) \dt &\leq C_2 E_3(0) + C_3\left\{ 
			\left| \int_0^T \inltwo{G(z),\tz_t} \dt \right|
			+\left| \int_0^T \inltwo{G(z),A\tz} \dt \right| \right. \\
			&+ \left.\left| \int_0^T \inltwo{G(z),A\tz_t} \dt \right| 
			+\left| \int_0^T \inltwo{BG(z),A\widetilde{\theta}} \dt \right|
			\right\}.
		\end{aligned}
	\end{equation}
	Recalling $X(t) = E_{2}(t) + E_{3}(t)$, combine \eqref{L2_F_E} and \eqref{L3_F_E}:
	\begin{equation} 
		\label{X_goal}
		\begin{aligned}
			X(T) + C_1 \int_0^T X(t) \dt
			&\leq C_2 X(0)+C_3\left\{ 
			\left| \int_0^T \inltwo{F,z_t} \dt \right|
			+\left| \int_0^T \inltwo{F,Az} \dt \right| \right. \\
			&+\left| \int_0^T \inltwo{F,Az_t} \dt \right|
			+\left| \int_0^T \inltwo{BF,A\theta} \dt \right|
			+\left| \int_0^T \inltwo{G,\tz_t} \dt \right| \\
			&+\left| \int_0^T \inltwo{G,A\tz} \dt \right|  \left.
			+\left| \int_0^T \inltwo{G,A\tz_t} \dt \right| 
			+\left| \int_0^T \inltwo{BG,A\widetilde{\theta}} \dt \right|
			\right\}.
		\end{aligned}
	\end{equation}
	
	\medskip
	
	\noindent {\it Step 4:}
	We now need to estimate the integrals on the right-hand side (r.h.s.) of Equation \eqref{X_goal} (eight terms in the brackets) 
	to get \eqref{barrier_bound_final} for $s = 3$. We will be using the fact that 
	\begin{equation} 
		\label{Sobolev_Embeddings_2}
		H^2(\Omega) \hookrightarrow L^{\infty}(\Omega) \quad \mbox{and} \quad H^2(\Omega) \hookrightarrow W^{1,4}(\Omega) \qquad \text{ for } d \in \{2,3\}.
	\end{equation}
	\noindent {\it Step 4.1: The first four terms on the r.h.s. of \eqref{X_goal}.} 
	The embeddings in Equation (\ref{Sobolev_Embeddings_2}) together with Young's inequality lead to an estimate of the first term:
	\begin{equation} 
		\label{X_F_1} 
		\begin{aligned} 
			\left| \int_0^T \inltwo{F,z_t} \dt  \right| 
			&\leq  \left| \int_0^T \inltwo{3z^2Az,z_t} \dt \right| 
			+
			\left| \int_0^T \inltwo{6z|\nabla z|^2,z_t} \dt \right| \\
			&\leq  C_{\epsilon}\int_0^T X^3(t) \dt  +\epsilon \int_0^T X(t) \dt.
		\end{aligned} 
	\end{equation}
	Here, we noted $H^2(\Omega) \hookrightarrow W^{1,4}(\Omega)$. 
	In general, $W^{2,p}(\Omega) \hookrightarrow W^{1,4}(\Omega)$ for $p > {\frac{4d}{4+d}}$. Since $d=2$ or $3$, we chose $p = 2$. 
	Similar arguments apply to the next two terms with the following inequalities:
	\begin{align}
		\label{X_F_2}   
		\left| \int_0^T \inltwo{F,Az} \dt  \right| 
		&\leq  C\int_0^T \ltwo{Az}^3 \ltwo{Az} \dt
		\leq  C \int_0^T X^2(t) \dt, \\
		\label{X_F_3}  
		\left| \int_0^T \inltwo{F,Az_t} \dt  \right| 
		&\leq  C_{\epsilon}\int_0^T X^3(t) \dt +\epsilon \int_0^T X(t) \dt.
	\end{align} 
	Again, by a similar argument, using the continuity of the operator $B$ and Equation \eqref{X_F_1}, we can estimate the 4\textsuperscript{th} term:
	\begin{equation}  
		\label{X_F_4}   
		\begin{aligned} 
			\left| \int_0^T \inltwo{BF,A\theta} \dt  \right|
			&\leq C\left| \int_0^T \ltwo{B(3z^2Az+6z|\nabla z|^2)}^2 \dt  \right| 
			+
			C\left| \int_0^T \ltwo{A\theta}^2  \right| 
			\\
			&\leq C\int_0^T \ltwo{Az}^6 \dt
			+
			C\left[ E_2(0) + \left|\int_0^T \inltwo{F, z_t} \dt \right| \right]
			\\
			&\leq CX(0) + C_{\epsilon} \int_0^T X^3(t) \dt + \epsilon \int_0^T X(t) \dt.
		\end{aligned} 
	\end{equation}
	
	\noindent {\it Step 4.2: The highest order terms on the r.h.s. of \eqref{X_goal}.} 
	In order to estimate the remaining four terms containing $G$, we rewrite $G = G_1 + G_2$, where
	\begin{equation}
		G_1=-6zz_tAz-3z^2Az_t+6z_t|\nabla z|^2
		\quad \mbox{and} \quad
		G_2=12z (\nabla z \cdot \nabla z_t). \notag
	\end{equation}
	Therefore, the 7\textsuperscript{th} term can be bounded as follows (after two integrations by parts):
	\begin{equation} 
		\label{X_G_Aztt} 
		\begin{aligned} 
			\bigg| \int_0^T &\inltwo{G,A\tz_t} \dt \bigg|
			\leq\ \left| \int_0^T \inltwo{G_1(z),Az_{tt}} \dt  \right| + \left| \int_0^T \inltwo{G_2(z),Az_{tt}} \dt  \right| \\
			&\leq\ \left| \inltwo{G_1(z),Az_t} \Big|_0^T \right| 
			+
			\left| \int_0^T \inltwo{\partial_{t}G_1(z),Az_t} \dt  \right| 
			+
			\left| \int_0^T \inltwo{G_2(z),Az_{tt}} \dt  \right| 
			\\
			&\leq \tfrac{1}{2}\ltwo{G_1(z(0))}^2  
			+
			\tfrac{1}{2}\ltwo{Az_t(0)}^2 
			+
			C_{\epsilon} \ltwo{G_1(z(T))}^2  	\\
			&+
			\epsilon \ltwo{Az_t(T)}^2 
			+
			\left| \int_0^T \inltwo{\partial_{t}G_1(z),Az_t} \dt  \right|
			+\left| \int_0^T \inltwo{G_2(z),Az_{tt}} \dt  \right|.
		\end{aligned} 
	\end{equation} 
	
	\noindent {\it Step 4.2.1: The first four terms on the r.h.s. of \eqref{X_G_Aztt}}.
	%
	\begin{align}
		\big\|&G_1(z(0))\big\|_{L^{2}(\Omega)}^{2} \notag \\
		&\leq
		6\ltwo{z(0)z_t(0)Az(0)}^2+3\ltwo{z(0)^2Az_t(0)}^2 +6\ltwo{z_t(0)|\nabla z(0)|^2}^2 \notag \\
		&\leq
		C \Big\{\|z(0)\|_{H^2(\Omega)}^6 +\|z_t(0)\|_{H^2(\Omega)}^6 + \ltwo{Az(0)}^6 +\|z(0)\|_{H^2(\Omega)}^8 \notag \\
		&+ \ltwo{Az_t(0)}^4 +
		\|z_t(0)\|_{H^2(\Omega)}^4 +\|z(0)\|_{H^2(\Omega)}^8 \Big\} \label{X_G_1_1} \\
		&\leq  
		C \Big\{\|z(0)\|_{H^2(\Omega)}^2 +\|z_t(0)\|_{H^2(\Omega)}^2 + \ltwo{Az(0)}^2 +
		\|z(0)\|_{H^2(\Omega)}^2 + \ltwo{Az_t(0)}^2 \notag \\
		&+ \|z_t(0)\|_{H^2(\Omega)}^2 +\|z(0)\|_{H^2(\Omega)}^2 
		\Big\} \notag \\
		&\leq CX(0). \notag
	\end{align}
	Here, we used the `smallness' assumption $X(0) < 1$ from Equation \eqref{small_than_one}.
	An argument similar to Equation \eqref{X_G_1_1} yields
	\begin{align}
			C_{\epsilon} \ltwo{G_1(z(T))}^2 &\leq 
			C_{\epsilon} \ltwo{(-6zz_tAz-3z^2Az_t+6z_t|\nabla z|^2 )(T)}^2 \notag \\
			&\leq
			C_{\epsilon} \Big\{\|z(T)\|_{H^2(\Omega)}^6 +\|z_t(T)\|_{H^2(\Omega)}^6 + \ltwo{Az(T)}^6 \label{X_G_1_3} \\
			&+
			\|z(T)\|_{H^2(\Omega)}^8 + \ltwo{Az_t(T)}^4 
			+
			\|z_t(T)\|_{H^2(\Omega)}^4 +\|z(T)\|_{H^2(\Omega)}^8 
			\Big\} \notag \\
			&\leq\  
			C_{\epsilon} \big[X^4(T)+X^6(T)+X^8(T)\big]. \notag
	\end{align} 
	Trivially,
	\begin{equation} 
		\label{X_G_1_2} 
		\tfrac{1}{2} \ltwo{Az_t(0)}^2 \leq X(0)
		\quad \mbox{and} \quad
		\epsilon \ltwo{Az_t(T)}^2 \leq \epsilon X(T).
	\end{equation}
	
	%
	\noindent {\it Step 4.2.2: The 5\textsuperscript{th} terms on the r.h.s. of \eqref{X_G_Aztt}}.
	Estimating $\ds \left| \int_0^T \inltwo{\partial_{t}G_1(z),Az_t} \dt  \right|$ with $G_1=-6zz_tAz-3z^2Az_t+6z_t|\nabla z|^2$ and
	\begin{equation}
		\label{expression_six_for_partial_G}
		\partial_{t}G_1(z)=-6z_t^2Az-6zz_{tt} Az-12z z_t A z_t -3z^2 Az_{tt}  +6z_{tt}  |\nabla z|^2+12z_t(\nabla z \cdot \nabla z_t)
	\end{equation}
	amounts to dealing with each of the respective six terms.
	\begin{enumerate}[leftmargin=6.5mm, label= ({\alph*})]
		\item First, we use Young's inequality to write
		\begin{equation} 
			\label{X_G_1_5_a} 
			\left| \int_0^T \inltwo{z_t ^2 Az,Az_t }  \right| 
			\leq C\int_0^T \|z_t\|_{H^2(\Omega)}^2 \left(\ltwo{Az}^2+\ltwo{Az_t }^2\right) 
			\leq C \int_0^T  X^2(t).
		\end{equation} 
		
		\item By H\"{o}lder's inequality, choose $p=3, q=3/2$, we get
		\begin{equation} 
			\label{holder_2_3}
			\| a \cdot b\|_{L^2(\Omega)}^2 \leq \| a \|_{L^6(\Omega)}^{2} \cdot \|b\|_{L^3(\Omega)}^{2}.
		\end{equation}
		In bounded domains of $\mathbb{R}^{d}$ for $d = 2, 3$, we have
		$H^1(\Omega) \hookrightarrow L^6(\Omega) \hookrightarrow L^4(\Omega) \hookrightarrow L^3(\Omega)$, i.e.,
		\begin{equation} 
			\|a\|_{L^6(\Omega)} \leq C\|a\|_{H^1(\Omega)}, \qquad \|a\|_{L^4(\Omega)} \leq C\|a\|_{H^1(\Omega)}, \qquad  \|a\|_{L^3(\Omega)} \leq C\|a\|_{H^1(\Omega)}. \notag
		\end{equation}
		Hence,
		\begin{align}  
			\bigg| \int_0^T &\inltwo{z z_{tt} Az,Az_t} \dt \bigg|
			\leq C\int_0^T \|z\|_{H^2(\Omega)} \left( \|z_{tt}\|_{H^1(\Omega)}^2 \cdot \|Az\|_{H^1(\Omega)}^2+\ltwo{Az_t}^2 \right) \dt \notag \\
			&\leq C\int_0^T \Big(\|Az\|_{L^2(\Omega)} \|A^{1/2}z_{tt}\|_{L^2(\Omega)}^2 \|A^{3/2}z\|_{L^2(\Omega)}^2+\|Az\|_{L^2(\Omega)} \ltwo{Az_t}^2\Big) \dt \notag \\
			&\leq C\int_0^T \|Az\|_{L^2(\Omega)}^3 \dt + C\int_0^T \|A^{1/2}z_{tt}\|_{L^2(\Omega)}^6 \dt + C\int_0^T \|A^{3/2}z\|_{L^2(\Omega)}^6 \dt \notag \\
			& + \epsilon \int_0^T \|Az\|_{L^2(\Omega)}^2 \dt + C_{\epsilon} \int_0^T \ltwo{Az_t}^4 \dt \label{X_G_1_5_b} \\
			&\leq C\int_0^T \|Az\|_{L^2(\Omega)}^3 \dt + C\int_0^T \|A^{1/2}z_{tt}\|_{L^2(\Omega)}^6 \dt + C \Big[X(t) + X^3(t) \Big]^3 \dt \notag \\
			&+\epsilon \int_0^T \|Az\|_{L^2(\Omega)}^2 \dt + C_{\epsilon} \int_0^T \ltwo{Az_t}^4 \dt \notag \\
			&\leq C\int_0^T X^{3/2}(t) \dt + C\int_0^T X^3(t) \dt 
			+ \epsilon \int_0^T X(t) \dt + C_{\epsilon} \int_0^T X^2(t) \dt. \notag
		\end{align} 
		Here, we used \eqref{z_H3} and assumption in Equation \eqref{small_than_one_forall} implying $X^{k}(t) \leq X(t)$ for $k \geq 1$.
		
		\item By H\"older's inequality,
		\begin{equation} 
			\begin{aligned} 
				\left| \int_0^T \inltwo{z z_t Az_t,Az_t } \dt \right| 
				&\leq C  \int_0^T \|z\|_{L^{\infty}(\Omega)} \|z_t \|_{L^{\infty}(\Omega)}\ltwo{Az_t }^2 \dt \\
				&\leq \epsilon \int_0^T X(t) \dt + C_\epsilon \int_0^T X^3(t) \dt.
			\end{aligned}
		\end{equation} 
		
		\item 
		Since 
		\begin{equation}
			\partial_{t} \inltwo{z^2 Az_t, Az_t}=\inltwo{2zz_tAz_t, Az_t}+\inltwo{2z^2Az_t,Az_{tt}}, \notag
		\end{equation}
		we have
		\begin{equation} 
			\begin{aligned} 
				\bigg| \int_0^T &\inltwo{z^2Az_{tt},Az_t } \dt \bigg|
				= \left| \tfrac{1}{2} \int_0^T \partial_{t} \inltwo{z^2 Az_t, Az_t} \dt - \int_0^T \inltwo{zz_tAz_t, Az_t} \dt \right| \\
				&\leq
				C\left| \inltwo{z^2 Az_t, Az_t}(T) \right|
				+
				C\left| \inltwo{z^2 Az_t, Az_t}(0) \right|
				+
				C\left|\int_0^T \inltwo{zz_tAz_t, Az_t} \dt \right| \\
				&\leq  
				C\|z(T)\|_{H^2(\Omega)}^2 \ltwo{Az_t(T)}^2
				+
				C\|z(0)\|_{H^2(\Omega)}^2 \ltwo{Az_t(0)}^2 \\
				&+
				C\int_0^T \|z\|_{L^{\infty}(\Omega)} \|z_t\|_{L^{\infty}(\Omega)} \ltwo{Az_t}^2 \dt \\
				&\leq  
				CX^2(0) + CX^2(T) + C\int_0^T \|Az\|_{2} \ltwo{Az_t}^3 \dt \\
				&\leq CX(0) + CX^2(T) + \epsilon \int_0^T X(t) \dt + C_{\epsilon}\int_0^T X^3(t) \dt.
			\end{aligned} 
		\end{equation} 
		Here, $X^2(0) \leq X(0)$ by Equation \eqref{small_than_one}.
		
		\item Again, using Equation \eqref{holder_2_3}, we get
		\begin{equation} 
			\begin{aligned} 
				\left| \int_0^T \inltwo{z_{tt} |\nabla z|^2,Az_t } \dt \right| 
				&\leq 
				C_{\epsilon} \int_0^T \|z_{tt} \|_{L^6(\Omega)}^2  \| |\nabla z|^2 \|_{L^3(\Omega)}^2
				+
				\epsilon \int_0^T \ltwo{Az_t}^2 \dt \\
				&\leq
				C_{\epsilon} \int_0^T X^2(t) \dt + C_{\epsilon}\int_0^T X^4(t) \dt
				+
				\epsilon \int_0^T X(t) \dt.
			\end{aligned} 
		\end{equation} 
		
		\item Similarly,
		\begin{equation}
			\label{X_G_1_5_f}
			\left| \int_0^T \inltwo{z_t (\nabla z, \nabla z_t), Az_t } \dt \right| 
			\leq  
			C\int_0^T X^{3/2}(t) \dt + C\int_0^T X^{3/2}(t) \dt + C\int_0^T X^3(t) \dt.
		\end{equation} 
	\end{enumerate}
	
	Now, collecting Equations \eqref{X_G_1_5_a}--\eqref{X_G_1_5_f} and recalling \eqref{expression_six_for_partial_G}, we get
	\begin{equation} 
		\label{X_G_1_5} 
		\begin{aligned} 
			\left| \int_0^T \inltwo{\partial_{t}G_1(z), Az_t} \dt \right|  
			\leq & 
			CX(0) + CX^2(T) + \epsilon \int_0^T X(t) \dt \\
			& + C_{\epsilon} \int_0^T \left[X^{3/2}(t)  + X^2(t) + X^3(t) + X^4(t)\right] \dt.
		\end{aligned} 
	\end{equation} 
	
	\noindent {\it Step 4.2.3: The 6\textsuperscript{th} (last) term on the r.h.s. of \eqref{X_G_Aztt}}. 
	The estimate is produced in a similar fashion to Equation \eqref{X_G_1_5_f}:
	\begin{equation} 
		\label{X_G_2} 
		\begin{aligned} 
			\left| \int_0^T \inltwo{G_2(z),Az_{tt}} \dt \right| 
			&\leq
			C \left| \int_0^T \|z\|_{H^2(\Omega)} \| |\nabla z| \|_{H^2(\Omega)} \inltwo{A z_t ,A^{1/2}z_{tt}} \dt \right|  \\
			&\leq
			C \int_0^T X^{3/2}(t) \dt + C\int_0^T X^{3/2}(t) \dt + C\int_0^T X^3(t) \dt. 
		\end{aligned} 
	\end{equation} 
	
	\noindent {\it Step 4.3: The 5\textsuperscript{th} and 6\textsuperscript{th} term on the r.h.s. of \eqref{X_goal}.} 
	These are lower-order terms compared to those from Step 4.2. 
	Hence, we skip the details and just state the final results:
	\begin{align}
		\left| \int_0^T \inltwo{G,\tz_t} \dt \right| 
		&\leq
		\epsilon \int_0^T X(t) \dt + C_{\epsilon} \int_0^T \left[ X^{3/2}(t) +X^3(t) + X^4(t) + X^6(t) \right] \dt, \label{X_G_ztt}  \\
		\left| \int_0^T \inltwo{G,A\tz} \dt \right| 
		&\leq 
		\epsilon \int_0^T X(t) \dt + C_{\epsilon} \int_0^T \left[ X^{3/2}(t) +X^3(t) + X^4(t) + X^6(t) \right] \dt.  \label{X_G_Azt} 
	\end{align}
	
	%
	
	\noindent {\it Step 4.4: The 8\textsuperscript{th} (last) term on the r.h.s. of \eqref{X_goal}.}
	By an argument similar to Equation \eqref{level1cor3}, we get
	\begin{equation} 
		\label{Atheta_t}
		\int_0^T \ltwo{A\theta_t}^2 \dt \leq E_3(0)+\int_0^T \inltwo{G,z_{tt}} \dt.
	\end{equation}
	Therefore,
	\begin{equation} 
		\label{X_G_Atheta_t} 
		\begin{aligned} 
			\bigg|& \int_0^T \inltwo{BG,A\widetilde{\theta}} \dt \bigg| 
			\leq 
			C\int_0^T \ltwo{G}^2 + CE_3(0)+C\int_0^T \inltwo{G,z_{tt}} \dt \\
			&\leq\
			\epsilon \int_0^T X(t) \dt + CX(0) +  C_{\epsilon} \int_0^T \left[ X^{3/2}(t) +X^2(t)+X^3(t) + X^4(t) + X^6(t) \right] \dt. \\
		\end{aligned} 
	\end{equation} 
	
	\noindent \emph{Step 5:} Plugging Equations \eqref{X_F_1}--\eqref{X_F_4}, \eqref{X_G_2}--\eqref{X_G_Azt} and \eqref{X_G_Atheta_t} into \eqref{X_goal}, we finally estimate
	\begin{equation*}
		\begin{aligned}
			(1-\epsilon)X(T) &+ (C_1 - 8\epsilon) \int_0^T X(t) \dt \\
			&\leq
			C_{\epsilon} X(0) +
			C_{\epsilon}  \int_0^T\left[ X^{3/2}(t) +X^2(t)+X^3(t) + X^4(t)  + X^6(t) \right] \dt \\
			&+
			C_{\epsilon}  \left[ X^{2}(T) + X^{4}(T) + X^{6}(T) + X^{8}(T) \right],
		\end{aligned}
	\end{equation*}
	that is,
	\begin{equation} 
		\label{X_Final}
		\begin{aligned}
			X(T) &+ C_1 \int_0^T X(t) \mathrm{d}t \\
			&\leq
			C_2 X(0)  + C_3  \int_0^T\left[ X^{3/2}(t) +X^2(t)+X^3(t) + X^4(t) + X^6(t)\right] \dt \\
			&+ C_4  \left[ X^{2}(T) + X^{4}(T) + X^{6}(T) + X^{8}(T) \right],
		\end{aligned}
	\end{equation}
	which finishes the proof.
\end{proof}

\begin{remark} 	\label{equivalence_part1_rmk}
With Equation \eqref{X_Final} at hand, we can now apply the standard `barrier method' (cf. \cite[Lemma 5.1, p 485]{ignatova2014well}) to deduce the globality of the local solution, 
whose existence is guaranteed by Theorem \ref{THEOREM_LOCAL_EXISTENCE} (or Theorem \ref{w_THEOREM_LOCAL_EXISTENCE}) -- \emph{not} in the energy space (endowed with $\|\cdot\|_{X}$), 
but in the phase space (endowed with $\|\cdot\|_{\mathcal{Z}_{s} \times \mathcal{T}_{s}}$) instead. 
Apparently, $\max_{0 \leq t \leq T} \|\cdot\|_{X} \leq \max_{0 \leq t \leq T} \|\cdot\|_{\calZ_3 \times \calT_3}$ for any $0 < T < T_{\mathrm{max}}$. 
In Lemma \ref{equivalence_part1} below, we will show a `reverse' inequality, which is sufficient for a contradiction proof (see the proof of Theorem \ref{global_wellposedness_proof} below). 
After the uniform stability of the energy is established, a second lemma, i.e., Lemma \ref{equivalent_norms} below, will be presented to show the equivalence over the whole time half-line $[0, \infty)$. 
In the spirit of Remark \ref{equiv_energy_spaces_rmk}, we have:
\end{remark}


\begin{lemma}[Controlling $\max_{0 \leq t \leq T} \|\cdot\|_{\calZ_3 \times \calT_3}$ in terms of $\max_{0 \leq t \leq T} \|\cdot\|_{X}$]  \label{equivalence_part1}
	Assume a classical solution $(z, \theta)$ to Equations (\ref{system_stability_1})--(\ref{system_stability_4}) over a time interval $[0, T_{\max})$ satisfies the smallness condition
	$ 
		E_2(t) < \epsilon_1
	$
	from Equation \eqref{small_data_for_boost}, then, there holds for any $T \in (0, T_{\max})$:
	\begin{equation}
		\max_{0 \leq t \leq T} \|(z, \theta)\|_{\calZ_3 \times \calT_3}^{2}  \leq C \max_{0 \leq t \leq T} \big(\|(z,\theta)\|_{X}^{2} + \|(z,\theta)\|_{X}^{6}\big)
		\text{ for some } C = C(T) > 0. \notag
	\end{equation}
\end{lemma}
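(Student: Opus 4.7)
The plan is to bound each component of the phase-space norm $\|(z,\theta)(t,\cdot)\|_{\calZ_3 \times \calT_3}^2$ pointwise at each fixed $t \in [0,T]$ in terms of $X(t) = E_2(t) + E_3(t)$ and its powers, by exploiting the PDEs \eqref{system_stability_1}--\eqref{system_stability_2} as pointwise identities in $L^2(\Omega)$ and trading higher spatial for higher temporal regularity (or vice versa). The quantities $\|z_t\|_{H^2}^2$, $\|z_{tt}\|_{H^1}^2$, $\|\theta\|_{H^2}^2$, $\|\theta_t\|_{H^2}^2$ are immediately controlled by $X$ from the definitions \eqref{E_2_definition}--\eqref{E_3_definition}. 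For $\|z(t,\cdot)\|_{H^3}^2$ I invoke Lemma \ref{energy_boost} directly, which gives $\|z\|_{H^3}^2 \leq C(X + X^3)$ under the smallness condition \eqref{small_data_for_boost}.

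For $\|\theta(t,\cdot)\|_{H^4}^2$, I use the heat equation to write $A\theta = -\tfrac{1}{\eta}(\beta\theta_t + \sigma\theta + \alpha z_t)$; each summand on the right lies in $H^2(\Omega) \cap H^1_0(\Omega)$ and is bounded in $H^2$ by $CX^{1/2}$. Since $A\theta$ vanishes on $\partial\Omega$ (by the boundary conditions on $\theta_t$, $\theta$, $z_t$), standard elliptic regularity for the Dirichlet Laplacian on a smooth domain yields $\|\theta\|_{H^4}^2 \leq C(\|A\theta\|_{H^2}^2 + \|\theta\|_{L^2}^2) \leq CX$. For $\|z_{ttt}(t,\cdot)\|_{L^2}^2$ I differentiate \eqref{system_stability_1} in time to get $(A^{-1}+\gamma)z_{ttt} + Az_t - \alpha A\theta_t = F_t$, with $F_t = G(z)$ as in \eqref{definition_G}. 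Since $B = (A^{-1}+\gamma)^{-1}$ is bounded on $L^2(\Omega)$ (as used already in Lemma \ref{lemma_barrier}), $z_{ttt} = B(-Az_t + \alpha A\theta_t + F_t)$, and $\|F_t\|_{L^2}^2 \leq C(X^2 + X^3)$ by the same Sobolev-embedding arguments as in Step 4.2 of Lemma \ref{lemma_barrier} combined with Lemma \ref{energy_boost}. Consequently $\|z_{ttt}\|_{L^2}^2 \leq C(X + X^3)$.

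The main obstacle is bounding the $\theta$-side top derivatives $\|\theta_t\|_{H^3}^2$ and $\|\theta_{tt}\|_{H^1}^2$, since the time-differentiated heat equation $\beta\theta_{tt} + \eta A\theta_t = -\sigma\theta_t - \alpha z_{tt}$ produces only a coupled bound: solving for $A\theta_t$ and invoking elliptic regularity gives $\|\theta_t\|_{H^3}^2 \leq C(\|\theta_{tt}\|_{H^1}^2 + X)$, while solving for $\theta_{tt}$ gives the symmetric inequality, and neither decouples by itself. To close the loop I bring in the time-differentiated wave equation $\alpha A\theta_t = (A^{-1}+\gamma)z_{ttt} + Az_t - F_t$, which expresses $A\theta_t$ independently in terms of data already controlled in the preceding steps. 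The delicate point is the $H^1$-norm of $(A^{-1}+\gamma)z_{ttt}$: the $A^{-1}z_{ttt}$ part is fine (two orders of smoothing applied to the $L^2$-bound already shown), while the $\gamma z_{ttt}$ part must be absorbed on the left using the smallness assumption \eqref{small_data_for_boost}. After absorption I obtain $\|\theta_t\|_{H^3}^2 + \|\theta_{tt}\|_{H^1}^2 \leq C(X + X^3)$.

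Summing the componentwise estimates and identifying $X(t) + X^3(t) = \|(z,\theta)\|_X^2 + \|(z,\theta)\|_X^6$, the bound $\|(z,\theta)(t,\cdot)\|_{\calZ_3 \times \calT_3}^2 \leq C(X(t) + X^3(t))$ follows, and passing to $\max_{0 \leq t \leq T}$ yields the claim with a constant that is independent of $T$ in this step (any $T$-dependence enters only if the absorption above has to be supplemented by a Gronwall step tied to the time-integrated energy identity \eqref{X_Final} from Lemma \ref{lemma_barrier}, which is why the statement permits $C = C(T)$).
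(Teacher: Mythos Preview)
Your treatment of $\|z\|_{H^3}$, $\|\theta\|_{H^4}$, and $\|z_{ttt}\|_{L^2}$ is fine and matches the paper's. The gap is in your handling of $\|\theta_{tt}\|_{H^1}$ (and the coupled $\|\theta_t\|_{H^3}$). You propose to use the time-differentiated wave identity
\[
\alpha A\theta_t = (A^{-1}+\gamma)z_{ttt} + Az_t - F_t
\]
at the $H^1$ level, but this requires controlling $\gamma\|z_{ttt}\|_{H^1}$ and $\|Az_t\|_{H^1}$, neither of which lies in $X$ (which only carries $z_{ttt}\in L^2$ and $z_t\in H^2$). Your proposed ``absorption on the left using the smallness assumption \eqref{small_data_for_boost}'' cannot work: the coefficient in front of $z_{ttt}$ is the fixed physical constant $\gamma>0$, not a data-dependent small factor, and there is no identity that re-expresses $\|z_{ttt}\|_{H^1}$ as a small multiple of $\|A\theta_t\|_{H^1}$ plus controlled terms. (The same obstruction blocks a $z_t$-analogue of Lemma~\ref{energy_boost}: differentiating the wave equation and applying $A^{1/2}$ reproduces exactly the uncontrolled $A^{1/2}z_{ttt}$ and $A^{3/2}\theta_t$.) So the pointwise-in-$t$ loop does not close.

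The paper breaks this circularity by a genuinely parabolic, time-integrated mechanism: it first bounds $\|z_{ttt}\|_{L^2(0,T;L^2(\Omega))}^2 \le \int_0^T (X+X^3)\,\mathrm{d}t \le T(1+\max X^2)\max X$ from the pointwise estimate you already have, and then applies the maximal $L^2$-regularity of $A$ (equivalently, Theorem~\ref{THEOREM_APPENDIX_LINEAR_HEAT_EQUATION} at level $s=3$) to the heat equation \eqref{system_stability_2} differentiated twice in time. This yields $\theta_{tt}\in C([0,T];H^1_0)$ with norm controlled by the $L^2(0,T;L^2)$-norm of the source $z_{ttt}$ plus initial data. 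The factor $T$ appearing here is exactly why the lemma is stated with $C=C(T)$; your closing parenthetical guessed at this correctly, but the mechanism is maximal regularity, not a Gronwall step.
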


\begin{proof}
It suffices to consider the four highest energy terms: $\ltwo{A^{3/2} z}, \ltwo{z_{ttt}}, \ltwo{A^2\theta}$ and $\ltwo{A^{1/2}\theta_{tt}}$.
The first one, as shown in Lemma \ref{energy_boost},
is bounded by $C \big(\|(z, \theta)\|_{X} + \|(z,\theta)\|_{X}^{3}\big)$ for any $t \geq 0$. 
Using Equations (\ref{system_stability_1})--(\ref{system_stability_2}), the second and the third terms can be controlled by appropriate lower order terms.
Indeed, applying $\partial_{t}$ and $A$ to Equations (\ref{system_stability_1}) and (\ref{system_stability_2}), respectively,
and exploiting the bounded invertibility of $(A^{-1} + \gamma)$, we estimate
\begin{align}
	\ltwo{z_{ttt}} &\leq C \big(\ltwo{Az_t} + \ltwo{A\theta_t} + \ltwo{F'}\big) \leq C\big(\|(z,\theta)\|_{X} + \|(z,\theta)\|_{X}^{3}\big), \label{z_ttt_cubic}\\
	\ltwo{A^2\theta} &\leq C \big(\ltwo{A\theta_t} + \ltwo{A\theta} + \ltwo{Az_t}\big) \leq  C \|(z,\theta)\|_{X}
\end{align}
for any $t \geq 0$, which remains true after passing to supremum.
The last term is treated in the same fashion as in the proof of Theorem \ref{THEOREM_APPENDIX_LINEAR_HEAT_EQUATION}.
Estimating
\begin{equation} 		\label{maximal_L2_regularity}
	\|z_{ttt}\|_{L^{2}(0, T; L^{2}(\Omega))}^{2} \leq
	\int_{0}^{T} \big(X(t) + X^3(t)\big) \mathrm{d}t \leq T \Big(1 + \max_{0 \leq t \leq T} X^{2}(t)\Big) \max_{0 \leq t \leq T} X(t) 
\end{equation}
via \eqref{z_ttt_cubic} and exploiting the maximal $L^{2}$-regularity of $A$ on $(0, T)$ applied to Equation (\ref{system_stability_2}) differentiated twice in time,
the desired estimate follows.
%
\end{proof}

\begin{theorem}[Global Existence] 
	\label{global_wellposedness_proof}
	Let Assumption \ref{ASSUMPTION_LOCAL_EXISTENCE} be satisfied for some $s \geq 3$.
	Then, there exists a positive number $\epsilon$ such that for any initial data satisfying $X(0) < \epsilon$
	(which roughly means the smallness of $\|z^{0}\|_{H^{3}(\Omega)}^{2} + \|z^{1}\|_{H^{2}(\Omega)}^{2} + \|\theta^{0}\|_{H^{4}(\Omega)}^{2}$),
	the associated local solution of system \eqref{system_stability_1}--\eqref{system_stability_4} from Theorem \ref{THEOREM_LOCAL_EXISTENCE} exists globally, namely, $T_{\mathrm{max}} = \infty$.
\end{theorem}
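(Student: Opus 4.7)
The plan is to combine the a priori estimate of Lemma \ref{lemma_barrier} with a standard continuation (``barrier'') argument and to rule out both scenarios of the blow-up alternative in Theorem \ref{THEOREM_LOCAL_EXISTENCE}. First I would observe that the particular nonlinearity fixed in \eqref{definition_F} corresponds to the quasilinear coefficient $a(z)=1+3z^{2}\geq 1>0$ in front of $Az$, so the ellipticity alternative in Theorem \ref{THEOREM_LOCAL_EXISTENCE} cannot be triggered at any finite time. By Lemma \ref{equivalence_part1}, the remaining phase-space blow-up alternative would require $X(t)\nearrow\infty$ as $t\nearrow T_{\max}$, so it suffices to produce a uniform-in-time bound on $X(\cdot)$.

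For the continuation step, I would fix $\eta\in(0,\min\{1,\epsilon_{1}\})$ to be specified later (with $\epsilon_{1}$ from Lemma \ref{energy_boost}) and, for $\epsilon<\eta$, consider initial data with $X(0)<\epsilon$. Continuity of $t\mapsto X(t)$, which follows from the regularity of classical solutions in Definition \ref{DEFINITION_CLASSICAL_SOLUTION}, makes the set
$$\mathcal{S}:=\{T\in[0,T_{\max}) \,:\, X(t)\leq\eta \text{ for all } t\in[0,T]\}$$
a non-empty interval $[0,T^{\star}]$. On $[0,T^{\star}]$ both hypotheses $X(0)<1$ and $X(t)<1$ of Lemma \ref{lemma_barrier} are met, so \eqref{X_Final} applies. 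Using $X(t)\leq\eta$ I would dominate every super-linear term $X^{\alpha}(t)$ in the integral on the right-hand side of \eqref{X_Final} by $\eta^{\alpha-1}\,X(t)$, and similarly each boundary term $X^{\alpha}(T^{\star})$ by $\eta^{\alpha-1}\,X(T^{\star})$. Choosing $\eta$ small enough that these absorption coefficients are strictly smaller than the positive constants in front of $X(T^{\star})$ and $\int_{0}^{T^{\star}}X(t)\,\mathrm{d}t$ on the left of \eqref{X_Final} yields
$$X(T^{\star}) + \tilde C_{1}\int_{0}^{T^{\star}} X(t)\,\mathrm{d}t \leq \tilde C_{2}\,X(0) \leq \tilde C_{2}\,\epsilon,$$
with $\tilde C_{1},\tilde C_{2}>0$ depending only on the fixed $\eta$.

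Setting now $\epsilon:=\min\{\eta,\eta/(2\tilde C_{2})\}$ forces $X(T^{\star})\leq\eta/2<\eta$, which by continuity of $X$ contradicts the maximality of $T^{\star}$ unless $T^{\star}=T_{\max}$. Hence $X(t)\leq\eta$ uniformly on $[0,T_{\max})$. Combined with Lemma \ref{equivalence_part1}, this gives a uniform bound on $\|(z,\theta)\|_{\calZ_{3}\times\calT_{3}}$ throughout $[0,T_{\max})$; together with the ellipticity observation this rules out both clauses of the blow-up alternative in Theorem \ref{THEOREM_LOCAL_EXISTENCE}, forcing $T_{\max}=\infty$.

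The main technical care-point is the absorption argument: I have to verify that \emph{every} super-linear power appearing in \eqref{X_Final}---in particular the critical $X^{3/2}$ terms, which are the slowest to shrink as $\eta\to 0$---produces a coefficient $\eta^{\alpha-1}$ small enough to be absorbed into $C_{1}$ on the left (and analogously for the boundary terms). This amounts to an explicit but elementary smallness threshold on $\eta$ determined by the constants $C_{1},C_{3},C_{4}$ in \eqref{X_Final}, after which $\epsilon$ is determined. The higher energy levels $s\geq 4$ follow by the same scheme, since analogous a priori bounds of the form \eqref{barrier_bound_final} hold there with the same structure.
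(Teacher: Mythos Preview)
Your proposal is correct and follows essentially the same barrier/continuation strategy as the paper, combining Lemma \ref{lemma_barrier} with Lemma \ref{equivalence_part1} to rule out the blow-up alternative of Theorem \ref{THEOREM_LOCAL_EXISTENCE}. The differences are purely cosmetic: the paper encodes the absorption step by introducing auxiliary polynomials $k(x)=x-C_{4}(x^{2}+\dots)$ and $h(x)=C_{1}x-C_{3}(x^{3/2}+\dots)$ and argues by contradiction that $h(X(t))>0$ throughout, whereas you run the equivalent open--closed bootstrap directly on the level set $\{X\leq\eta\}$. Your observation that $a(z)=1+3z^{2}\geq 1$ disposes of the ellipticity alternative in one line, which is slightly cleaner than the paper's route through the smallness bound $E_{2}(t)<\epsilon_{1}$; conversely, the paper's formulation via $k$ and $h$ makes the smallness threshold $\epsilon$ more explicit (as the preimage $k^{-1}([0,C_{2}\epsilon])$), which is convenient when the same threshold is reused in Corollary \ref{cor_unif_stability}.
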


\begin{proof}
	Without loss of generality, assume $X(0) > 0$. Indeed, if $X(0) = 0$, the only solution to \eqref{system_stability_1}--\eqref{system_stability_4} is the trivial one
	and, therefore, exists globally.
	
	We argue by contradiction. Assume $T_{\max} < \infty$. \medskip

	\noindent \emph{Step 0:} Let 
	\begin{equation}  \label{smallness_epsilon_ultimate}
	\epsilon = \min\{1, \epsilon_1, \epsilon_2\},
	\end{equation} 
	where $\epsilon_1$ is given in Equation \eqref{small_data_for_boost} in Lemma \ref{energy_boost}
	and $\epsilon_2$ is defined in Step 1 below.
	Note that due to the validness of Equation (\ref{small_data_for_boost}) to be established below (cf. proof of Step 3),
	the hyperbolicity of Equation (\ref{system_stability_1}) is satisfied for all times
	and the solution can only cease to exist if $X(t)$ blows-up.
	
	By Theorem \ref{THEOREM_LOCAL_EXISTENCE}, there exists a (local) solution on the maximal interval $[0, T_{\max})$.
	Define
	\begin{align*}
		k(x) &= x-C_4(x^2+x^4+x^6+x^8) \text{ and } \\
		h(x) &= C_1x-C_3\left[ x^{3/2} +x^{2}+x^{3} + x^{4} + x^{9/2} + x^{6} + x^{9}\right],
	\end{align*}
	where $C_1, C_3,$ and $C_4$ come from Equation \eqref{X_Final}. 
	Since $X(0)<\epsilon \leq 1$, Equation \eqref{small_than_one} is satisfied. Therefore, the estimate in Equation \eqref{barrier_bound_final}, or \eqref{X_Final}, holds and can be rewritten as 
	\begin{equation} 
		\label{X_barrier_rewrite}
		k\left(X(T)\right) + \int_0^T h(X(t)) \dt \leq C_2X(0) \text{ for } 0 \leq T < T_{\mathrm{max}}.
	\end{equation}
	Further, we observe that the (algebraic) equation $k(x) = 0$ has a unique positive solution denoted by $\eta$. 
	There also holds $k(x) > 0$ for $x \in [0,\eta)$.
	Similarly, $h(x) = 0$ has a unique positive solution denoted by $\xi$. Besides, $h(x) > 0$ for $x \in [0, \xi)$. \medskip
	
	\noindent \emph{Step 1: Small initial data $X(0)$.}
	Consider the inequality 
	\begin{equation}
		k(y) \leq C_2X(0). \notag
	\end{equation}
	Due to the continuity of $k$, for small $X(0)$, the above inequality implies $y \in [0, \delta_1] \cup [\delta_2,\infty)$, 
	where $\delta_1 \to 0$ and $\delta_2 \to \eta$ as $X(0) \to 0$. 
	Therefore, there exists an $\epsilon_2 > 0$
	such that
if $X(0) < \epsilon_2$, the following hold true:
	\begin{align}
		X(0) < \epsilon_2 &< \delta_2,  \label{X(0)_small_1}\\
		\delta_1 &< \xi,  \label{X(0)_small_2} \\
		\delta_1 &< \epsilon_1 \text{ and } \label{global_bound_for_propagation} \\
		\epsilon_2 &< \xi \mbox{ so that } h\big[X(0)\big] > 0. \notag
	\end{align}
	
	\noindent \emph{Step 2: Barrier method.} We claim
	\begin{equation} 
		\label{barrier_positive}
		h(X(t)) > 0 \mbox{ for any } t \geq 0.
	\end{equation}
	If this is not the case, by the continuity of $h \circ X$, there is a $T^* > 0$ such that $h\big[X(T^*)\big] = 0$. Hence,
	\begin{equation}  
		\label{barrier_equiq_1}
		X(T^*) = \xi.
	\end{equation}
	On the other hand, the above assumption also suggests $h\big[X(t)\big] \geq 0$ for any $t \in [0, T^*]$. 
	After invoking this in Equation \eqref{X_barrier_rewrite}, we arrive at $k\big[X(t)\big] \leq C_2X(0)$ for any $t \in [0,T^*]$. 
	By Step 1, $X(t) \in [0, \delta_1] \cup [\delta_2,\infty)$ for any $t \in [0,T^*]$. 
	However, by continuity of $X$ and Equation \eqref{X(0)_small_1}, we can eliminate the second interval and reduce the inclusion to
	\begin{equation}
		X(t) \in [0, \delta_1] \mbox{ for any } t \in [0,T^*]. \notag
	\end{equation}
	More specifically, $X(T^*) \leq \delta_1 < \xi$ by Equation \eqref{X(0)_small_2}, which contradicts \eqref{barrier_equiq_1}. \medskip
	%
	%
	
	
	\noindent \emph{Step 3: Uniform boundedness of $X(t)$.} 
	Exploiting Equations \eqref{barrier_positive} and \eqref{X_barrier_rewrite}, 
	we get $k\big[X(t)\big] \leq C_2X(0)$ for any $t \geq 0$, which leads to the global bound on $X(t)$
	\begin{equation} 		\label{uniform_bound_on_X_ult}
		X(t) \leq \delta_1 \mbox{ for any } t \geq 0 
	\end{equation}
	by a similar argument as above.
	In particular, $X(T) \leq \delta_1 \leq \epsilon_1$ by Equation \eqref{global_bound_for_propagation}. Equation \eqref{uniform_bound_on_X_ult} meets the condition of Lemma \ref{equivalence_part1}. 
	Hence, after a possible rescaling,
	Assumption \ref{ASSUMPTION_LOCAL_EXISTENCE} is satisfied by $(z, z_{t}, \theta)(T_{\mathrm{max}}, \cdot)$. Therefore,
	Theorem \ref{THEOREM_LOCAL_EXISTENCE} implies the solution exists on $[T_{\mathrm{max}}, T')$ for some $T' > T_{\mathrm{max}}$, which contradicts the maximality of $[0, T_{\mathrm{max}})$.
\end{proof}

\begin{corollary}[Uniform Stability] 	\label{cor_unif_stability}
	Under the assumptions of Theorem \ref{global_wellposedness_proof} with  $X(0) < \tilde{\epsilon}$ for some positive number $\tilde{\epsilon}$ (possibly smaller than $\epsilon$ from Equation \eqref{smallness_epsilon_ultimate} of Theorem \ref{global_wellposedness_proof}), there exist positive constants $C$ and $k$ such that
	\begin{equation} 		\label{exponential_decay_expression}
		X(t) \leq e^{-kt} CX(0)  \text{ for } t \geq 0.
	\end{equation}
\end{corollary}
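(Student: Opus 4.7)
The plan is to first upgrade the \emph{a priori} estimate of Lemma \ref{lemma_barrier} to a purely linear integral inequality for $X(t)$ under an additional smallness assumption, and then extract exponential decay via a sliding-window barrier argument that exploits the autonomy of \eqref{system_stability_1}--\eqref{system_stability_4}.

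First, I would pick $\tilde\epsilon \in (0,\epsilon]$ (with $\epsilon$ as in Theorem \ref{global_wellposedness_proof}) so small that the uniform global bound $X(t)\leq\delta_{1}$ from \eqref{uniform_bound_on_X_ult} combined with the estimate \eqref{X_Final} produces
\begin{equation*}
C_{4}\sum_{i\in I} X^{\alpha_i-1}(t) \leq \tfrac12 \quad\text{and}\quad C_{3}\sum_{j\in J} X^{\beta_j-1}(t) \leq \tfrac12 \qquad \text{for all } t\geq 0,
\end{equation*}
which is possible since every $\alpha_i,\beta_j>1$. Since $X(T)\leq \delta_1$, the superlinear terms on the right-hand side of \eqref{X_Final} can be absorbed into the left-hand side, yielding the clean linear inequality
\begin{equation*}
X(T) + \kappa \int_{0}^{T} X(t)\,\dt \;\leq\; M\, X(0) \qquad \text{for every } T\geq 0,
\end{equation*}
for suitable constants $\kappa, M>0$ independent of $T$.

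Next, since the system \eqref{system_stability_1}--\eqref{system_stability_4} is autonomous and since $(z(\cdot+t_0),\theta(\cdot+t_0))$ still satisfies the smallness hypothesis thanks to \eqref{uniform_bound_on_X_ult}, the very same estimate applies starting from any $t_0\geq 0$:
\begin{equation*}
X(T) + \kappa \int_{t_0}^{T} X(t)\,\dt \;\leq\; M\, X(t_0) \qquad \text{for every } T\geq t_0\geq 0.
\end{equation*}
Dropping the $X(T)$ term and fixing any window length $L>0$, one has the control $\int_{t_0}^{t_0+L} X(t)\,\dt \leq (M/\kappa) X(t_0)$. On the other hand, re-applying the shifted estimate at intermediate times $\tau\in[t_0,t_0+L]$ with the final time $T=t_0+L$ gives $X(t_0+L)\leq M X(\tau)$ for all such $\tau$, hence
\begin{equation*}
X(t_0+L)\;\leq\; \frac{M}{L}\int_{t_0}^{t_0+L} X(\tau)\,\dtau.
\end{equation*}
Combining these two bounds in the master inequality yields $\bigl(1+\tfrac{L}{M}\bigr) X(t_0+L)\leq M X(t_0)$, i.e. $X(t_0+L)\leq q X(t_0)$ with $q:=M^{2}/(M+L)$. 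Choosing $L>M(M-1)$ makes $q<1$, and iteration over $n$ windows delivers the geometric bound $X(nL)\leq q^{n} X(0)$. Interpolating with the uniform bound $X(t)\leq M X(nL)$ for $t\in[nL,(n+1)L]$ then gives $X(t)\leq C e^{-kt} X(0)$ for $k=-(\ln q)/L>0$, and the equivalence provided by Lemma \ref{equivalence_part1} (together with its counterpart extending the comparison of $\|\cdot\|_{X}$ and $\|\cdot\|_{\calZ_s\times\calT_s}$ from Remark \ref{equivalence_part1_rmk}) transfers the decay from $X$ to the full phase-space norm $\|(\partial_{t}^{\leq s}w,\partial_{t}^{\leq s-1}\theta)\|_{\calZ_s\times\calT_s}$.

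The main obstacle is the absorption step: the nonlinear terms on the right of \eqref{X_Final} include both \emph{endpoint} contributions $X^{\alpha_i}(T)$ and \emph{integral} contributions $\int_0^T X^{\beta_j}(t)\,\dt$, and both must be simultaneously dominated by the linear quantities $X(T)$ and $\int_0^T X(t)\,\dt$. This is where I have to choose $\tilde\epsilon$ strictly smaller than $\epsilon$ so that \emph{both} smallness conditions hold uniformly in $t$ via the already-established $X(t)\leq\delta_1$; the rest of the argument (time-translation, sliding window, geometric decay) is then essentially an adaptation of the standard semigroup/barrier argument and presents no genuine difficulty beyond careful bookkeeping of constants.
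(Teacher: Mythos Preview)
Your proposal is correct and follows essentially the same route as the paper: absorb the superlinear terms in \eqref{X_Final} using the global bound $X(t)\leq\delta_1$, invoke time-autonomy to obtain the shifted linear inequality $X(T)+\kappa\int_{t_0}^T X\,\dt\leq M X(t_0)$, lower-bound the integral via $X(\tau)\geq M^{-1}X(t_0+L)$, choose $L$ large to force a contraction, and iterate. The only slip is that you dropped the factor $\kappa$ when combining the two bounds (the correct inequality reads $(1+\kappa L/M)X(t_0+L)\leq M X(t_0)$, hence $q=M^2/(M+\kappa L)$), but this is immaterial to the argument; also note that the final paragraph on transferring decay to $\|\cdot\|_{\calZ_s\times\calT_s}$ is not part of this corollary but of the subsequent Lemma~\ref{equivalent_norms} and Theorem~\ref{w_uniform_stability}.
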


\begin{proof}
	We rewrite Equation \eqref{X_Final} again as follows:
	\begin{equation} 
		\label{X_Final_rewritten_2}
		\begin{aligned}
			X(T) &\left\{1 - C_4  \left[ X(T) + X^{3}(T) + X^{5}(T) + X^{7}(T) \right]\right\} \\
			&+ \int_0^T X(t) \big\{1-C_4  \big[ X^{1/2}(t) +X^{1}(t)+X^{2}(t) + X^{3}(t) \\
			&+ X^{7/2}(t) + X^{5}(t) + X^{8}(t) \big]\big\} \dt \\
			&\leq C_2 X(0).
		\end{aligned}
	\end{equation}
	Choosing a bound $\epsilon_3$ on $X(0)$ small enough, we can make the global bound $\delta_1$ of $X(t)$ satisfy 
	\begin{align*}
		1 - C_4  \left[ \delta_1 + \delta_1^{3} + \delta_1^{5} + \delta_1^{7} \right] &\geq \tfrac{1}{2}, \\ 
		1 - C_4  \left[ \delta_1^{1/2} +\delta_1^{1}+\delta_1^{2} + \delta_1^{3} + \delta_1^{7/2} + \delta_1^{5} + \delta_1^{8} \right] &\geq \tfrac{1}{2}.
	\end{align*}
	This together with Equation \eqref{X_Final_rewritten_2} implies
	\begin{equation} 
		\label{X_Final_rewritten_3}
		X(T) + \int_0^T X(t) \dt \leq 2C_2 X(0),
	\end{equation}
	which gives $X(T) \leq 2C_2X(0)$ for any $T>0$. 
	Now we impose the final assumption on $X(0)$. Recall the number $\epsilon$ from \eqref{smallness_epsilon_ultimate} and
	let 
	\begin{equation}  	\label{smallness_tilde_epsilon}
		\tilde{\epsilon} = \min\left\{\tfrac{\epsilon}{2C_2}, \epsilon_3\right\}.
	\end{equation}
	Since $X(0)<\tilde{\epsilon} \leq \tfrac{\epsilon}{2C_2}$, 
	then $X(t) \leq \epsilon$ for any $t>0$.
	Thus, Equation \eqref{X_Final_rewritten_3} can be extended to
	\begin{equation} 
		\label{X_Final_rewritten_4}
		X(T) + \int_s^T X(t)  \dt \leq 2C_2 X(s)
	\end{equation}
	for any $s \in (0, T]$. 
	Hence, 
	\begin{equation} 
		\label{lower_bound_X(t)}
		X(t) \geq \tfrac{1}{2C_2}X(T) \text{ for } t \in [0, T].
	\end{equation}
	Combining Equation \eqref{lower_bound_X(t)} with \eqref{X_Final_rewritten_3}, we get $X(T) + \tfrac{T}{2C_2} X(T) \leq 2C_2 X(0)$. Therefore,
	\begin{equation} 
		\label{X_Final_stability_1}
		X(T) \leq \frac{1}{1 + \frac{T}{2C_2}} X(0) \mbox{ for any } T > 0.
	\end{equation}
	By choosing $T$ large enough, we get 
	\begin{equation} 
		\label{X_Final_stability}
		X(T) \leq \kappa X(0) \mbox{ for some } \kappa < 1.
	\end{equation}
	Repeating the procedure on $[T, 2T]$, $[2T, 3T]$, etc., we arrive at
	\begin{equation}
		X(t) \leq \kappa^{\lceil t/T\rceil} X(0) \leq \kappa^{t/T} X(0) \leq e^{-\big(|\ln(\kappa)|/T\big) t} X(0) \text{ for } t \geq 0, \notag
	\end{equation}
	which finishes the proof.
\end{proof}

We have now proved all main results stated in Section \ref{SECTION_MAIN_RESULTS} in the energy space associated with $\sup_{0 \leq t<\infty} X(t)$ from Equation \eqref{X_definition}. 
As announced in Remark \ref{equiv_energy_spaces_rmk} and stated in Lemma \ref{equivalent_norms} below, 
the Banach space generated by the energy (supremum) $\sup_{0 \leq t <\infty} X(t)$ is isomorphic to the solution space in Equation \eqref{s_spaces} from our Theorem \ref{THEOREM_LOCAL_EXISTENCE} for $s = 3$ 
when the initial data are sufficiently small (by virtue of Equation \eqref{smallness_epsilon_ultimate}).  
%
%
%
\begin{lemma}[Equivalence of $\sup_{0 \leq t < \infty} \|\cdot\|_{\calZ_3 \times \calT_3}$ and $\sup_{0 \leq t < \infty} \|\cdot\|_{X}$]  \label{equivalent_norms}
	If a classical solution $(z, \theta)$ to Equations (\ref{system_stability_1})--(\ref{system_stability_4}) is global, satisfies the smallness condition
	$ 
		E_2(t) < \epsilon_1
	$
	from Equation \eqref{small_data_for_boost} and decays exponentially as in Equation (\ref{exponential_decay_expression}), both norms mentioned above are equivalent:
	\begin{equation}
		c_1 \sup_{0 \leq t < \infty} \|(z,\theta)\|_{X}
		\leq
		\|(z, \theta)\|_{\calZ_3 \times \calT_3} 
		\leq 
		c_2 \sup_{0 \leq t < \infty} \|(z,\theta)\|_{X}\quad \text{ for some } c_1 \mbox{ and } c_2 > 0. 
	\end{equation}
\end{lemma}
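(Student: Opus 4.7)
The first inequality $c_1 \sup_t \|(z,\theta)\|_X \leq \|(z,\theta)\|_{\calZ_3 \times \calT_3}$ is immediate from inspection of the definitions: every $L^2$-summand entering $E_2(t)+E_3(t)$ is dominated, up to universal constants, by a spatial norm appearing in Definition \ref{DEFINITION_CLASSICAL_SOLUTION} at the same time $t$, and the temporal $\sup$ is already built into the $\calZ_3 \times \calT_3$-norm; so the substantive content is the reverse inequality.

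For that, I intend to establish a pointwise bound of the form $\|(z(t),\theta(t))\|_{\calZ_3 \times \calT_3}^2 \leq C\big(X(t)+X(t)^3\big)$ and then take $\sup_t$. Beyond what $X(t)$ already controls, the $\calZ_3 \times \calT_3$-norm has four top-order constituents: $\|A^{3/2}z\|_{L^2}^2$, $\|z_{ttt}\|_{L^2}^2$, $\|A^2\theta\|_{L^2}^2$ and $\|A^{1/2}\theta_{tt}\|_{L^2}^2$; any remaining intermediate terms (including, e.g., the $H^3$-contribution of $\theta_t$) are dominated by these by interpolation or by the same procedure one level down. Three of the four reduce algebraically to $X$: Lemma \ref{energy_boost} already delivers $\|A^{3/2}z\|_{L^2}^2 \leq C(X+X^3)$; differentiating \eqref{system_stability_1} in $t$ and inverting $(A^{-1}+\gamma)$ gives $\|z_{ttt}\|_{L^2}^2 \leq C(X+X^3)$, exactly as was observed in \eqref{z_ttt_cubic}; and applying $A$ to \eqref{system_stability_2} yields $\eta A^2\theta = -\beta A\theta_t - \sigma A\theta - \alpha A z_t$, so $\|A^2\theta\|_{L^2}^2 \leq CX$.

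The genuinely delicate term is $\|A^{1/2}\theta_{tt}\|_{L^2}^2$, since differentiating \eqref{system_stability_2} in $t$ and applying $A^{1/2}$ algebraically leaks into $\|A^{3/2}\theta_t\|_{L^2}$, which is not a summand of $X$. I plan to circumvent this exactly as at the end of the proof of Lemma \ref{equivalence_part1}: view $\theta_t$ as the solution of the linear parabolic equation \eqref{sys2_level3} with right-hand side $-(\sigma\theta_t+\alpha z_{tt})/\beta$, and invoke the maximal $L^2$-regularity of $A$ on $(0,T)$ in the spirit of \eqref{maximal_L2_regularity} and Theorem \ref{THEOREM_APPENDIX_LINEAR_HEAT_EQUATION}, combined with the $L^2(H^2)$-information on $\theta_t$ already encoded in $E_3(t)$ and the control of $\|z_{tt}\|_{H^1}$ from $X$. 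This yields $\|A^{1/2}\theta_{tt}(t)\|_{L^2}^2 \leq C(X(t)+X(t)^3)$ in the pointwise sense; the exponential decay \eqref{exponential_decay_expression} guarantees that the time-integrated quantities entering the parabolic estimate are finite and, in fact, controlled by $\sup_t X(t)$.

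Summing the four pointwise estimates gives $\|(z(t),\theta(t))\|_{\calZ_3 \times \calT_3}^2 \leq C(X(t)+X(t)^3)$. By the hypothesis $E_2(t)<\epsilon_1$ together with Corollary \ref{cor_unif_stability}, $\sup_t X(t) \leq C X(0) < 1$, so $X(t)^3 \leq X(t)$ for every $t$; passing to $\sup_t$ delivers $\|(z,\theta)\|_{\calZ_3 \times \calT_3}^2 \leq c_2^2 \sup_t X(t) = c_2^2 \big(\sup_t \|(z,\theta)\|_X\big)^2$, which is the desired reverse inequality. The only real obstacle I foresee is the parabolic-smoothing step for $\|A^{1/2}\theta_{tt}\|_{L^2}$; all the other pieces amount to bookkeeping via already-established algebraic reductions and the boost lemma.
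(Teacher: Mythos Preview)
Your approach is essentially the paper's, but there is a slip in the parabolic step that, as written, does not close. Viewing $\theta_t$ as the unknown in \eqref{sys2_level3} with right-hand side $-(\sigma\theta_t+\alpha z_{tt})/\beta$ and invoking maximal $L^2$-regularity yields only $\theta_{tt}\in L^2(0,\infty;L^2)$, $A\theta_t\in L^2(0,\infty;L^2)$ and $\theta_t\in C^0([0,\infty),H^1_0)$; none of these produces $\sup_t\|A^{1/2}\theta_{tt}\|_{L^2}$, and there is no genuinely pointwise bound of the form $\|A^{1/2}\theta_{tt}(t)\|_{L^2}^2\leq C(X(t)+X(t)^3)$ available here.

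What the paper does---and what your reference to \eqref{maximal_L2_regularity} suggests you actually have in mind---is to differentiate \eqref{system_stability_2} \emph{twice} in time, so that $\theta_{tt}$ becomes the parabolic unknown with right-hand side $-(\sigma\theta_{tt}+\alpha z_{ttt})/\beta$. Maximal $L^2$-regularity on $(0,\infty)$ then yields $\sup_t\|\theta_{tt}\|_{H^1_0}^2\leq C\big(\|\bar{\theta}^2\|_{H^1}^2+\|z_{ttt}\|_{L^2(0,\infty;L^2)}^2\big)$, and the crucial input is the \emph{time-integrated} bound $\|z_{ttt}\|_{L^2(0,\infty;L^2)}^2\leq C\int_0^\infty X(t)\,\mathrm{d}t\leq C\sup_t X(t)$, finite precisely because of the exponential decay \eqref{exponential_decay_expression}. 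With this correction your argument matches the paper's; the rest of your outline (the three algebraic reductions and the use of $X(t)<1$ to linearize $X^3$) is fine.
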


\begin{proof}
The former inequality is trivial. For the latter one, in contrast to Lemma \ref{equivalence_part1}, all superlinear terms are linearly dominated because they are bounded by 1, 
and we are only left to show Equation \eqref{maximal_L2_regularity} with a constant independent of $T$. 
However, due to the exponential decay of $X(t)$ (Equation \eqref{exponential_decay_expression}), \eqref{maximal_L2_regularity} becomes 
\begin{equation} 		\label{maximal_L2_regularity_true_equivalence}
	\|z_{ttt}\|_{L^{2}(0, \infty; L^{2}(\Omega))}^2 
	\leq
	\tilde{C} \int_{0}^{\infty} X(t) \mathrm{d}t 
	\leq 
	\tilde{C} C \int_{0}^{\infty} e^{-kt}X(0) \mathrm{d}t
	\leq 
	c_2X(0) \leq c_2 \sup_{0 \leq t <\infty} X(t).
\end{equation}
With $A$'s maximal $L^{2}$-regularity on $(0, \infty)$, the estimate for $\sup_{0 \leq t < \infty} \ltwo{A^{1/2}\theta_{tt}}$ follows.
%
\end{proof}

Since the smallness assumption is satisfied in both Equation \eqref{smallness_epsilon_ultimate} of Theorem \ref{global_wellposedness_proof} 
and Equation \eqref{smallness_tilde_epsilon} of Corollary \ref{cor_unif_stability}, 
we resubstitute $w = A^{-1} z$ and conclude with the desired results Theorem \ref{w_global_wellposedness} and \ref{w_uniform_stability}.



\begin{appendix}
	\section{Existence Theory for Linear Evolution Equations} \label{APPENDIX}
	Let $\Omega \subset \mathbb{R}^{d}$ be a bounded domain with a $C^{s}$-boundary $\partial \Omega$ for some $s \geq \lfloor \tfrac{d}{2}\rfloor + 2$
	and let $T > 0$ be arbitrary, but fixed.
	The following well-posedness results are based on Kato's solution theory \cite{Ka1985} for abstract time-dependent evolution equations
	and its improved version presented by Jiang and Racke in \cite[Appendix A]{JiaRa2000}
	as well as maximal $L^{p}$-regularity theory (see, e.g., \cite{KuWe2004}).
	
	Thoughout this appendix and in the proof of Theorem \ref{THEOREM_LOCAL_EXISTENCE},
	we employ the following notation. For $n \geq 0$, we define
	\begin{equation}
		  \bar{D}^{n} := \big((\partial_{t}, \nabla)^{\alpha} \,|\, 0 \leq |\alpha| \leq n\big) \text{ and }
		  H^{0}_{0}(\Omega) \equiv H^{0}(\Omega) := L^{2}(\Omega). \notag
	\end{equation}
	
	Let $\phi_{\delta} \colon \mathbb{R} \to [0, \infty)$
	denote the one-dimensional Friedrichs' mollifier with a `bandwidth' $\delta > 0$.
	For an $L^{1}$-function $z \colon [0, T] \times \Omega \to \mathbb{R}$, we let
	\begin{equation}
		z_{\delta}(t, \cdot) = \int_{0}^{T} \phi_{\delta}(t - s) z(s, \cdot) \mathrm{d}s \text{ for } t \in [0, T] \text{ in } \Omega. \notag
	\end{equation}
	For details on approximation properties of mollifiers, we refer to \cite[Chapters 8 and 9]{SchuKaHoKa2012}.
	The following result is known from \cite[Lemma A.12]{JiaRa2000}.
	\begin{lemma}
		\label{LEMMA_MOLLIFIER_PROPERTIES}
		Let $a \in C^{1}\big([0, T], L^{\infty}(\Omega)\big)$,
		$v \in C^{0}\big([0, T], L^{2}(\Omega)\big)$ and
		$w \in L^{2}\big(0, T; H^{-1}(\Omega)\big)$.
		Then, for any sufficiently small $\varepsilon > 0$, there holds
		\begin{equation}
			\begin{split}
				\int_{\varepsilon}^{T - \varepsilon} \big\|\partial_{t}\big((av)_{\delta}(t, \cdot) - av_{\delta}(t, \cdot)\big)\big\|_{L^{2}(\Omega)}^{2} \mathrm{d}t
				&\to 0 \text{ and } \\
				\int_{\varepsilon}^{T - \varepsilon} \|w_{\delta}(t, \cdot)\|_{H^{-1}(\Omega)}^{2} \mathrm{d}t
				&\to \int_{\varepsilon}^{T - \varepsilon} \|w(t, \cdot)\|_{H^{-1}(\Omega)}^{2} \mathrm{d}t
				\text{ as } \delta \to 0.
			\end{split}
			\notag
		\end{equation}
	\end{lemma}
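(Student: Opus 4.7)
The plan is to treat the two assertions separately, since they are essentially independent: the first is a classical Friedrichs-type commutator estimate for time mollification against multiplication by a smooth (in $t$) coefficient, while the second is just the continuity of the mollification operator on Banach space-valued $L^2$.

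For the first assertion, I would start by writing the commutator pointwise as
\begin{equation}
R_\delta(t, x) := (av)_\delta(t, x) - a(t, x) v_\delta(t, x) = \int_0^T \phi_\delta(t - s) \big(a(s, x) - a(t, x)\big) v(s, x) \, \mathrm{d}s. \notag
\end{equation}
For $t \in [\varepsilon, T - \varepsilon]$ and $\delta < \varepsilon$ the integrand is supported strictly inside $[0, T]$, which legitimizes differentiation in $t$. Applying $\partial_t$ produces two contributions, one from $\phi_\delta(t - s)$ and one from $a(t, x)$, yielding
\begin{equation}
\partial_t R_\delta(t, x) = \int_0^T \phi_\delta'(t - s) \big(a(s, x) - a(t, x)\big) v(s, x) \, \mathrm{d}s - a_t(t, x) v_\delta(t, x). \notag
\end{equation}
The key step is to invoke the first-order Taylor identity $a(s, x) - a(t, x) = (s - t) \int_0^1 a_t\big(t + \tau(s - t), x\big) \, \mathrm{d}\tau$, substitute $r = t - s$, and recognize that the kernel $-r \phi_\delta'(r)$ is supported in $[-\delta, \delta]$ and has total mass one (integration by parts gives $\int -r \phi_\delta'(r) \, \mathrm{d}r = \int \phi_\delta(r) \, \mathrm{d}r = 1$). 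The first summand of $\partial_t R_\delta$ then converges, as $\delta \to 0$, to $a_t(t, x) v(t, x)$ in $L^2(\Omega)$ uniformly in $t \in [\varepsilon, T - \varepsilon]$, and this precisely cancels the second summand $a_t(t, x) v_\delta(t, x) \to a_t(t, x) v(t, x)$. Combining this pointwise-in-$t$ $L^2(\Omega)$ convergence with a uniform bound derived from $\|a_t\|_{L^\infty([0,T] \times \Omega)}$ and the continuity $t \mapsto v(t, \cdot) \in L^2(\Omega)$, the vanishing of $\int_\varepsilon^{T - \varepsilon} \|\partial_t R_\delta(t, \cdot)\|_{L^2(\Omega)}^2 \, \mathrm{d}t$ will follow by dominated convergence.

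For the second assertion, the essential point is that time mollification is a bounded linear operator on $L^2(\varepsilon, T - \varepsilon; H^{-1}(\Omega))$ for all $\delta < \varepsilon$ (via Young's convolution inequality with $\|\phi_\delta\|_{L^1(\mathbb{R})} = 1$) that converges strongly to the identity as $\delta \to 0$. I would verify this first on the dense subspace $C_c\big((0, T); H^{-1}(\Omega)\big)$, where convergence is pointwise in $t$ and hence in $L^2$ by uniform continuity and dominated convergence, and then extend to all of $L^2(0, T; H^{-1}(\Omega))$ by a standard three-epsilon argument. Strong $L^2$-convergence of $w_\delta$ to $w$ on $(\varepsilon, T - \varepsilon)$ then yields the convergence of the norms by continuity of $\|\cdot\|_{L^2(\varepsilon, T - \varepsilon; H^{-1}(\Omega))}$.

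The main obstacle lies in the first assertion, specifically in justifying that $-r\phi_\delta'(r)$ behaves as an approximate identity despite not being nonnegative: one cannot simply invoke Jensen's inequality. I would instead exploit the scaling $\phi_\delta(r) = \delta^{-1}\phi(r/\delta)$ to bound $\int_{\mathbb{R}} |r\phi_\delta'(r)| \, \mathrm{d}r$ uniformly in $\delta$, observe that the kernel has shrinking support, unit mean, and uniformly bounded $L^1$-norm, and then invoke the classical convergence $K_\delta \ast g \to (\int K) g$ in $L^2$ valid for such kernels applied to $g = a_t(\cdot, x) v(\cdot, x)$. Once this is in place, the rest of the argument reduces to bookkeeping with the $L^\infty$-bound on $a_t$ and Fubini.
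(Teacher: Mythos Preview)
The paper does not actually supply a proof of this lemma; it simply cites \cite[Lemma A.12]{JiaRa2000} and moves on. Your argument is correct and is precisely the classical Friedrichs commutator computation one finds in that reference: rewrite the commutator as an integral against $(a(s,\cdot)-a(t,\cdot))v(s,\cdot)$, differentiate, Taylor-expand $a$ in time, and recognize $-r\phi_\delta'(r)$ as a signed approximate identity with unit mass and uniformly bounded $L^1$-norm; the second assertion is, as you say, just strong convergence of mollification on Banach-valued $L^2$.

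One minor remark on presentation: in the first assertion you assert pointwise-in-$t$ convergence in $L^2(\Omega)$ ``uniformly in $t$'' and then invoke dominated convergence, which is slightly redundant---either uniform convergence on $[\varepsilon,T-\varepsilon]$ alone, or pointwise convergence together with the uniform bound you derive, already suffices. Also, when you write ``$\|a_t\|_{L^\infty([0,T]\times\Omega)}$'' you are implicitly using that $C^0([0,T],L^\infty(\Omega))\hookrightarrow L^\infty((0,T)\times\Omega)$, which is true but worth stating once. None of this affects correctness.
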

	
	\subsection{Linear Wave Equation}
	We consider a general linear wave equation with time- and space-dependent coefficients:
	\begin{subequations}
	\begin{align}
		z_{tt}(t, x) - \bar{a}_{ij}(t, x) \partial_{x_{i}} \partial_{x_{j}} z(t, x) &= \bar{f}(t, x)\phantom{0} \text{ for } (t, x) \in (0, T) \times \Omega,
		\label{EQUATION_LINEAR_WAVE_EQUATION_PDE} \\
		z(t, x) &= 0\phantom{\bar{f}(t, x)} \text{ for } (t, x) \in [0, T] \times \partial \Omega,
		\label{EQUATION_LINEAR_WAVE_EQUATION_BC} \\
		z(0, x) = z^{0}(x), \quad z_{t}(0, x) &= z^{1}(x)\phantom{00} \text{ for } x \in \Omega.
		\label{EQUATION_LINEAR_WAVE_EQUATION_IC}
	\end{align}
	\end{subequations}
	
	\begin{assumption}
		\label{ASSUMPTION_LINEAR_WAVE_EQUATION}
		Let $s \geq \lfloor\frac{d}{2}\rfloor + 2$ be a fixed integer
		and let $\gamma_{0}, \gamma_{1}$ be positive numbers.
		Assume the following conditions are satisfied.
		\begin{enumerate}
			\item \emph{Coefficient symmetry}:
			$\bar{a}_{ij}(t, x) = \bar{a}_{ji}(t, x)$ for $(t, x) \in [0, T] \times \bar{\Omega}$.
			
			\item \emph{Coefficient regularity}:
			$\bar{a}_{ij} \in C^{0}\big([0, T] \times \bar{\Omega}\big)$ and
			\begin{equation}
				\partial_{x_{k}} \bar{a}_{ij} \in L^{\infty}\big(0, T; H^{s - 1}(\Omega)\big), \quad
				\partial_{t}^{m} \bar{a}_{ij} \in L^{\infty}\big(0, T; H^{s - 1 - m}(\Omega)\big) \notag
			\end{equation}
			for $m = 1, 2, \dots, s - 1$.
			
			\item \emph{Coercivity}: For $z \in H^{1}_{0}(\Omega)$ and $t \in [0, T]$,
			\begin{equation}
				\|z\|_{H^{1}(\Omega)}^{2} \leq
				\gamma_{0} \Big(\langle \bar{a}_{ij} \partial_{x_{i}} z, \partial_{x_{j}} z\rangle_{L^{2}(\Omega)} +
				\|z\|^{2}_{L^{2}(\Omega)}\Big). \notag
			\end{equation}
			
			\item \emph{Elliptic regularity}:
			For $m = 0, 1, \dots, s - 2$, $z(t, \cdot) \in H^{1}_{0}(\Omega)$ and
			$\bar{a}_{ij}(t, \cdot) \partial_{x_{i}} \partial_{x_{j}} z(t, \cdot) \in H^{m}(\Omega)$ for a.e. $t \in [0, T]$ implies
			$u(t, \cdot) \in H^{m + 2}(\Omega)$ and
			\begin{equation}
				\|z(t, \cdot)\|_{H^{m}(\Omega)} \leq \gamma_{1}\Big(\|\bar{a}_{ij}(t, \cdot) \partial_{x_{i}} \partial_{x_{j}} z(t, \cdot)\|_{H^{m}(\Omega)} +
				\|z(t, \cdot)\|_{L^{2}(\Omega)}\Big) \text{ for a.e. } t \in [0, T]. \notag
			\end{equation}
			
			\item \emph{Right-hand side regularity}:
			For $m = 0, 1, \dots, s - 2$,
			\begin{equation}
				\partial_{t}^{m} \bar{f} \in C^{0}\big([0, T], H^{s - 2 - m}(\Omega)\big), \quad
				\partial_{t}^{s - 1} \bar{f} \in L^{2}(0, T; L^{2}(\Omega)\big). \notag
			\end{equation}
			
			\item \emph{Compatibility conditions}:
			For $m = 0, 1, \dots, s - 1$,
			\begin{equation}
				\bar{z}^{m} \in H^{s - m}(\Omega) \cap H^{1}_{0}(\Omega), \quad
				\bar{z}^{s} \in L^{2}(\Omega), \notag
			\end{equation}
			where $\bar{z}^{m}$ is recursively defined by
			\begin{equation}
				\begin{split}
					\bar{z}^{0}(x) &= z^{0}(x), \quad \bar{z}^{1}(x) = z^{1}(x), \\
					\bar{z}^{m}(x) &=
					\Big(\sum_{n = 0}^{m - 2} {m - 2 \choose n}
					\partial_{t}^{n} \bar{a}_{ij} \partial_{x_{i}} \partial_{x_{j}} \bar{z}^{m - 2 - n} +
					\partial_{t}^{m - 2} \bar{f}_{i}\Big)(0, x) \text{ for } m \geq 2
				\end{split}
				\notag
			\end{equation}
			for $x \in \Omega$.
		\end{enumerate}
	\end{assumption}
	
	\noindent
	Note that Assumption \ref{ASSUMPTION_LINEAR_WAVE_EQUATION}.2 differs from \cite[Assumption A.2.1.1]{JiaRa2000}.
	This extra regularity for $\bar{a}_{ij}$ will enable us
	to prove our {\it a priori} estimate at an energy level which is one order lower than in \cite[Theorem A.13]{JiaRa2000}.
	
	\begin{theorem}
		\label{THEOREM_APPENDIX_LINEAR_WAVE_EQUATION}
		Under Assumption \ref{ASSUMPTION_LINEAR_WAVE_EQUATION},
		the initial boundary value problem (\ref{EQUATION_LINEAR_WAVE_EQUATION_PDE})-(\ref{EQUATION_LINEAR_WAVE_EQUATION_IC})
		possesses a unique classical solution, which satisfies
		\begin{equation}
			z \in \bigcap_{m = 0}^{s - 1} C^{m}\big([0, T], H^{s - m}(\Omega) \cap H^{1}_{0}(\Omega)\big) \cap
			C^{s}\big([0, T], L^{2}(\Omega)\big). \notag
		\end{equation}
		Moreover, for $d \in \{2, 3\}$, letting
		\begin{equation}
			\begin{split}
				\phi_{0} &= \|\bar{a}_{ij}(0, \cdot)\|_{L^{\infty}(\Omega)} +
				\|\partial_{x_{k}} \bar{a}_{ij}(0, \cdot)\|_{H^{s - 1}(\Omega)}, \\
				\phi &= \sup_{0 \leq t \leq T} \Big(\|\bar{a}_{ij}(t, \cdot)\|_{L^{\infty}(\Omega)} +
				\|\partial_{x_{k}} \bar{a}_{ij}(t, \cdot)\|_{H^{s - 1}(\Omega)} +
				\sum_{m = 1}^{s - 1} \|\partial_{t}^{m} \bar{a}_{ij}(t, \cdot)\|_{H^{s - 1 - m}(\Omega)}\Big),
			\end{split}
			\notag
		\end{equation}
		there exists a positive number $K_{1}$,
		which is a continuous function of $\phi_{0}$, $\gamma_{0}$ and $\gamma_{1}$,
		and a positive number $K_{2}$, which continuously depends on $\phi$, $\gamma_{0}$ and $\gamma_{1}$,
		such that
		\begin{equation}
			\sup_{0 \leq t \leq T} \|\bar{D}^{s} z(t, \cdot)\|_{L^{2}(\Omega)}^{2} \leq
			K_{1} \Lambda_{0} \exp\big(K_{2} T^{1/2}(1 + T^{1/2} + T + T^{3/2})\big),
			\notag
		\end{equation}
		where
		\begin{equation}
			\Lambda_{0} :=
			\sum_{m = 0}^{s} \|\bar{z}\|_{H^{s - m}(\Omega)}^{2} +
			(1 + T) \sup_{0 \leq t \leq T} \big\|\bar{D}^{s - 2} \bar{f}(t, \cdot)\big\|_{L^{2}(\Omega)}
			+ T^{1/2} \int_{0}^{T} \|\partial_{t}^{s - 1} \bar{f}(t, \cdot)\|_{L^{2}(\Omega)}^{2} \mathrm{d}t.
			\notag
		\end{equation}
	\end{theorem}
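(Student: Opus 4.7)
\medskip
\noindent \textit{Proof proposal.} The plan is to follow the Kato-type strategy presented in \cite[Appendix A]{JiaRa2000}, using a Friedrichs mollification of $\bar{a}_{ij}$ and $\bar{f}$ in time together with Lemma \ref{LEMMA_MOLLIFIER_PROPERTIES} to reduce to the case of smooth coefficients, for which existence of a classical solution is granted by the abstract time-dependent semigroup theory of \cite{Ka1985}. The main effort is to derive an $s$-level \emph{a priori} estimate for the mollified problem whose constants are independent of the mollification parameter $\delta$, after which we pass to the limit $\delta \to 0$ by weak-$\ast$ compactness in the scale
\begin{equation}
\bigcap_{m=0}^{s-1} L^{\infty}\bigl(0,T;H^{s-m}(\Omega)\cap H^1_0(\Omega)\bigr) \cap W^{s,\infty}\bigl(0,T;L^2(\Omega)\bigr), \notag
\end{equation}
and recover continuity in time from the energy equality of Aubin--Lions type. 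Uniqueness is a standard $L^2$-energy argument on the difference of two solutions at the lowest level.

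\medskip
\noindent The heart of the proof is the higher-order energy estimate. I would differentiate Equation (\ref{EQUATION_LINEAR_WAVE_EQUATION_PDE}) in $t$ exactly $m$ times for $m = 0, 1, \dots, s-1$, test with $\partial_t^{m+1} z$ in $L^2(\Omega)$, and integrate over $(0,t)$. This yields, after using Assumption \ref{ASSUMPTION_LINEAR_WAVE_EQUATION}.1 and an integration by parts,
\begin{equation}
\tfrac{1}{2}\bigl\|\partial_t^{m+1}z(t,\cdot)\bigr\|_{L^2(\Omega)}^2 + \tfrac{1}{2}\bigl\langle \bar{a}_{ij}(t,\cdot)\partial_{x_i}\partial_t^m z,\partial_{x_j}\partial_t^m z\bigr\rangle_{L^2(\Omega)}\Big|_0^t = \int_0^t (\cdots)\,\mathrm{d}\tau, \notag
\end{equation}
where the right-hand side consists of a forcing contribution $\langle \partial_t^m \bar{f},\partial_t^{m+1} z\rangle$, a time derivative of the coefficient $\tfrac12 \langle (\partial_t \bar{a}_{ij})\partial_{x_i}\partial_t^m z,\partial_{x_j}\partial_t^m z\rangle$, and commutator terms $\bigl\langle [\partial_t^m,\bar{a}_{ij}\partial_{x_i}\partial_{x_j}] z,\partial_t^{m+1}z\bigr\rangle$. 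By Assumption \ref{ASSUMPTION_LINEAR_WAVE_EQUATION}.3 the left side dominates $\tfrac{1}{2\gamma_0}\|\partial_t^m z\|_{H^1(\Omega)}^2$ modulo lower-order Poincar\'e-absorbable terms.

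\medskip
\noindent The main obstacle is controlling the commutators and the coefficient-derivative term at the highest level $m = s-1$, where the extra time regularity of $\bar{a}_{ij}$ in Assumption \ref{ASSUMPTION_LINEAR_WAVE_EQUATION}.2 (one derivative more than in \cite{JiaRa2000}) is essential. Expanding the commutator via Leibniz and splitting by which factor carries the most derivatives, each piece is of the form $(\partial_t^n \bar{a}_{ij})\,\partial_{x_i}\partial_{x_j}\partial_t^{m-n}z$ for $1 \leq n \leq m$, and must be estimated in $L^2(\Omega)$ by a Moser-type product rule (cf. \cite[Theorem B.6]{JiaRa2000}) combined with the Sobolev embeddings $H^{s-1}(\Omega)\hookrightarrow L^\infty(\Omega)$ and $H^{s-2}(\Omega)\hookrightarrow L^p(\Omega)$ for suitable $p$, available because $d\in\{2,3\}$ and $s\geq 3$. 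This yields a bound of the shape $\phi \cdot (\text{lower-order }\bar{D}^s z)$ to be absorbed via Gronwall. To recover full spatial regularity at each time level, I invoke Assumption \ref{ASSUMPTION_LINEAR_WAVE_EQUATION}.4 iteratively: once $\|\partial_t^k z(t,\cdot)\|_{H^1(\Omega)}$ and $\|\partial_t^{k+1}z(t,\cdot)\|_{L^2(\Omega)}$ are controlled, solving Equation (\ref{EQUATION_LINEAR_WAVE_EQUATION_PDE}) for $\bar{a}_{ij}\partial_{x_i}\partial_{x_j}\partial_t^{k-1} z$ yields $\partial_t^{k-1} z \in H^2(\Omega)$, and iterating trades time for space derivatives up to $\bar{D}^s z$.

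\medskip
\noindent Summing the energy identities over $m=0,\dots,s-1$ and applying Gronwall's inequality produces
\begin{equation}
\sup_{0\leq t\leq T}\|\bar{D}^s z(t,\cdot)\|_{L^2(\Omega)}^2 \leq \Bigl(\mathcal{E}_0 + \mathcal{F}(T)\Bigr) \exp\bigl(K_2 T^{1/2}(1+T^{1/2}+T+T^{3/2})\bigr), \notag
\end{equation}
where $\mathcal{E}_0$ is the sum $\sum_m \|\bar{z}^m\|_{H^{s-m}(\Omega)}^2$ bounded through the compatibility conditions in Assumption \ref{ASSUMPTION_LINEAR_WAVE_EQUATION}.6 \emph{at $t=0$} — this is the step where only $\phi_0$ enters the constant $K_1$, since evaluating $\bar{z}^m$ requires only $\partial_t^n \bar{a}_{ij}(0,\cdot)$ and these in turn are controlled by $\phi_0$ via Equation (\ref{EQUATION_LINEAR_WAVE_EQUATION_PDE}) evaluated at $t=0$ (using the regularity $\partial_t^n \bar{a}_{ij} \in L^\infty(0,T;H^{s-1-n}(\Omega))$ and the trace $C^0([0,T];\ldots)$). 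The forcing contribution $\mathcal{F}(T)$ absorbs the $(1+T)\sup_t\|\bar{D}^{s-2}\bar{f}\|_{L^2} + T^{1/2}\|\partial_t^{s-1}\bar{f}\|_{L^2(0,T;L^2)}^2$ terms, with the $T$-factors arising from H\"older in time when pairing $\partial_t^m \bar{f}$ with $\partial_t^{m+1}z$. Finally, taking $\delta \to 0$ and using Lemma \ref{LEMMA_MOLLIFIER_PROPERTIES} to pass the coefficient-commutator terms to the limit, together with the continuity-in-time recovered by the energy equality, completes the proof.
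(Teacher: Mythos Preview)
Your overall strategy is sound and the energy-estimate core matches the paper closely: differentiate (\ref{EQUATION_LINEAR_WAVE_EQUATION_PDE}) up to $s-1$ times in $t$, test with $\partial_t^{m+1}z$, estimate the Leibniz commutators $(\partial_t^n\bar a_{ij})\partial_{x_i}\partial_{x_j}\partial_t^{m-n}z$ via Sobolev product rules (this is exactly where the extra regularity in Assumption~\ref{ASSUMPTION_LINEAR_WAVE_EQUATION}.2 and the restriction $d\le 3$ are used), then trade time derivatives for space derivatives via the elliptic regularity of Assumption~\ref{ASSUMPTION_LINEAR_WAVE_EQUATION}.4, and close with Gronwall. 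That part is essentially the paper's argument.

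The organisational difference is in how existence and the top-order estimate are handled. You propose to mollify the \emph{coefficients and data} $\bar a_{ij},\bar f$ in time, solve the resulting smooth-coefficient problem, prove $\delta$-uniform estimates, and pass to the limit by weak-$\ast$ compactness plus an Aubin--Lions argument for time continuity. The paper instead obtains existence and full regularity \emph{directly} from the abstract Kato/Jiang--Racke CD-system framework (\cite[Theorems~A.3 and~A.9]{JiaRa2000}) applied to the original coefficients---no approximation or compactness step is needed for existence. The mollifier $\phi_\delta$ and Lemma~\ref{LEMMA_MOLLIFIER_PROPERTIES} enter only in the energy-estimate portion, and only at the single top level $m=s-1$: there one cannot legitimately test the $(s-1)$-times differentiated equation with $\partial_t^s z$ (which would require $\partial_t^{s+1}z$), so the paper convolves the $(s-2)$-times differentiated equation with $\phi_\delta$, differentiates once more, tests with $(\partial_t^s z)_\delta$, and uses Lemma~\ref{LEMMA_MOLLIFIER_PROPERTIES} to kill the commutator $(\bar a_{ij}\partial_t^{s-2}\partial_{x_i}\partial_{x_j}z)_\delta-\bar a_{ij}(\partial_t^{s-2}\partial_{x_i}\partial_{x_j}z)_\delta$ as $\delta\to 0$ on $[\varepsilon,T-\varepsilon]$. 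Your route is workable but heavier: you must verify that the mollified coefficients inherit Assumptions~\ref{ASSUMPTION_LINEAR_WAVE_EQUATION}.3--4 uniformly in $\delta$, cope with extension near $t=0,T$, and then run a separate limit argument to recover the $C^m([0,T];\cdot)$ regularity. The paper's route avoids all of that by invoking the abstract theory for existence and confining the mollification to a single justification step.
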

	
	\begin{proof}
		Our proof is based on an abstract well-posedness and regularity result \cite[Theorems A.3 and A.9]{JiaRa2000}. \medskip \\
		{\em Existence and uniqueness at basic regularity level.}
		Similar to the proof of \cite[Theorem A.11]{JiaRa2000},
		we define for $t \in [0, T]$ a bounded linear operator
		\begin{equation}
			A(t) :=
			\begin{pmatrix}
				0 & -1 \\
				-\bar{a}_{ij}(t, \cdot) \partial_{x_{i}} \partial_{x_{j}} & 0
			\end{pmatrix}
			\colon Y_{1} \longrightarrow X_{0},
			\label{EQUATION_OPERATOR_FAMILY_LINEAR_WAVE_EQUATION}
		\end{equation}
		where the Hilbert space $X_{0} := H^{1}_{0}(\Omega) \times L^{2}(\Omega)$
		is equipped with the standard inner product induced by the product topology,
		whereas the inner product on the Hilbert space
		$Y_{1} := H^{2}(\Omega) \cap H^{1}_{0}(\Omega)$ reads as
		\begin{equation}
			\langle V, \bar{V}\rangle_{t} :=
			\big\langle \bar{a}_{ij}(t, \cdot) \partial_{x_{i}} z, \partial_{x_{j}} \bar{z}\big\rangle_{L^{2}(\Omega)} +
			\langle y, \bar{y}\rangle_{L^{2}(\Omega)} \quad
		\end{equation}
		for $V = (z, y)$ and $\bar{V} = (\bar{z}, \bar{y}) \in X_{0}$.
		Due to uniform coercivity of $\bar{a}_{ij}$ and by virtue of Poincar\'{e}-Friedrichs' inequality,
		each of the norms induced by $\langle \cdot, \cdot\rangle_{t}$ for any $t \in [0, T]$
		is equivalent to the standard norm on $X_{0}$.
		With this notation, letting $V := (z, \partial_{t} z)$,
		Equations (\ref{EQUATION_LINEAR_WAVE_EQUATION_PDE})--(\ref{EQUATION_LINEAR_WAVE_EQUATION_IC})
		can be rewritten as an abstract Cauchy problem
		\begin{equation}
			\partial_{t} V(t) + A(t) V(t) = F(t) \text{ in } (0, T), \quad V(0) = V^{0}
			\label{EQUATION_LINEAR_WAVE_EQUATION_CD_SYSTEM}
		\end{equation}
		with $F = (0, \bar{f})$ and $V^{0} = (z^{0}, z^{1})$.

		We want to show that the triple $\big(A; X_{0}, Y_{1}\big)$ is a CD-system
		in sense of \cite[Section A.1]{JiaRa2000}.
		For $t \in [0, T]$, consider the elliptic problem
		\begin{equation}
			\big(A(t) + \lambda\big) V = F \text{ with } F \in X_{0}. \notag
		\end{equation}
		Recalling Assumption \ref{ASSUMPTION_LINEAR_WAVE_EQUATION}.3,
		Lemma of Lax \& Milgram implies the resolvent estimate
		\begin{equation}
			\big\|\big(A(t) + \lambda\big)^{-1}\big\|_{L(X_{0})} \leq
			\tfrac{1}{\lambda - C} \text{ for } \lambda > \beta \text{ for some constants } \beta, C > 0,
			\label{EQUATION_RESOLVENT_ESTIMATE_A_OF_T_LINEAR_WAVE_EQUATION}
		\end{equation}
		where we used Assumption \ref{ASSUMPTION_LINEAR_WAVE_EQUATION}.2 and Sobolev's imbedding theorem to deduce
		\begin{equation}
			a_{ij}(t, \cdot) \in W^{1, \infty}(\Omega) \text{ for any } t \in [0, T]. \notag
		\end{equation}
		The continuity of the bilinear form follows similarly.
		By standard elliptic regularity theory applied to $A(t)$,
		which is possible because of Assumption \ref{ASSUMPTION_LINEAR_WAVE_EQUATION}.2 and \ref{ASSUMPTION_LINEAR_WAVE_EQUATION}.4
		as well as $C^{s}$-smoothness of $\partial \Omega$,
		the maximal domain of $A(t)$ coincides with $Y_{1}$. Hence, the operator $A(t)$ is closed.
		This along with Equation (\ref{EQUATION_RESOLVENT_ESTIMATE_A_OF_T_LINEAR_WAVE_EQUATION}) implies
		$(\beta, \infty) \subset \rho\big(A(t)\big)$.
		Therefore, $\big(A(t); t \in [0, T]\big)$ is a stable family of infinitesimal negative generators
		of $C_{0}$-semigroups on $X_{0}$ with stability constants $1, \beta$.
		Taking into account regularity conditions from Assumption \ref{ASSUMPTION_LINEAR_WAVE_EQUATION}.5,
		we can apply \cite[Theorem A.3]{JiaRa2000}, we get a unique classical solution
		\begin{equation}
			V \in C^{0}\big([0, T], Y_{1}\big) \cap C^{1}\big([0, T], X_{0}\big) \notag
		\end{equation}
		at the at basic regularity level, which is equivalent to
		\begin{equation}
			z \in C^{2}\big([0, T], L^{2}(\Omega)\big) \cap
			C^{1}\big([0, T], H^{1}_{0}(\Omega)\big) \cap
			C^{0}\big([0, T], H^{2}(\Omega) \cap H^{1}_{0}(\Omega)\big). \notag
		\end{equation}
		\noindent {\em Higher regularity.}
		For the proof of higher solution regularity,
		we consider the following increasing double scale $(X_{j}, Y_{j})$ of Hilbert spaces
		\begin{equation}
			\begin{split}
				X_{j} &= \big(H^{j+1}(\Omega) \cap H^{1}_{0}(\Omega)\big) \times H^{j}(\Omega) \text{ for } j \geq 1, \\
				Y^{j} &= \big(H^{j+1}(\Omega) \cap H^{1}_{0}(\Omega)\big) \times \big(H^{j}(\Omega) \cap H^{1}_{0}(\Omega)\big) \text{ for } j \geq 1,
				\quad Y_{0} = X_{0}.
			\end{split}
			\notag
		\end{equation}
		By virtue of Equation (\ref{EQUATION_OPERATOR_FAMILY_LINEAR_WAVE_EQUATION}), the condition
		\begin{equation}
			\partial_{t} A \in \mathrm{Lip}\big([0, T], L(Y_{j+r+1}, X_{j})\big)
			\text{ for } j = 0, \dots, s - r - 1 \text{ and }
			r = 0, \dots, s - 2 \notag
		\end{equation}
		is equivalent to
		\begin{equation}
			\partial_{t}^{r} \bar{a}_{ij}(t, \cdot) \partial_{x_{i}} \partial_{x_{j}} \in
			\mathrm{Lip}\big([0, T], L\big(H^{j+r+2}(\Omega) \cap H^{1}_{0}(\Omega), H^{j}(\Omega)\big)\big)
		\end{equation}
		for $j = 0, \dots, s - r - 1$ and $r = 0, \dots, s - 2$,
		while the latter is a direct consequence of Assumption \ref{ASSUMPTION_LINEAR_WAVE_EQUATION}.2
		and Sobolev imbedding theorem due to the fact
		$H^{\lfloor d/2\rfloor + 1}(\Omega) \hookrightarrow L^{\infty}(\Omega)$.
		Similarly, exploiting Assumption \ref{ASSUMPTION_LINEAR_WAVE_EQUATION}.4,
		one can easily verify for $j = 0, \dots, s-2$ and $\phi \in Y_{1}$ and a.e. $t \in [0, T]$
		that $A(t) \phi \in X_{j}$ implies
		\begin{equation}
			\phi \in Y_{j+1} \text{ and }
			\|\phi\|_{Y_{j+1}} \leq K \big(\|A(t) \phi\|_{X_{j}} + \|\phi\|_{X_{0}}\big) 
			\text{ for some constant } K > 0, \notag
		\end{equation}
		which does not depend on $\phi$.
		Further, Assumption \ref{ASSUMPTION_LINEAR_WAVE_EQUATION}.5 yields
		\begin{equation}
			\partial_{t} F \in C^{0}\big([0, T], X_{s-1-k}\big)
			\text{ for } k = 0, \dots, s-2 \text{ and }
			\partial_{t}^{s-1} F \in L^{1}(0, T; X_{0}). \notag
		\end{equation}
		Finally, Assumption \ref{ASSUMPTION_LINEAR_WAVE_EQUATION}.6 implies compatibility conditions
		in sense of \cite[Equations (A.8) and (A.9)]{JiaRa2000}.
		Hence, applying \cite[Theorem A.9]{JiaRa2000} at the energy level $s - 1$,
		we obtain additional regularity for the classical solution satisfying
		\begin{equation}
			V \in \bigcap_{m = 0}^{s-1} C^{m}\big([0, T], Y_{s - 1 - m}\big). \notag
		\end{equation}
		Rewriting $z$ in terms of $V$, this yields the desired regularity for $z$. \medskip \\
		{\em Energy estimates.}
		For $n = 1, \dots, s - 1$, applying the $\partial_{t}^{n-1}$-operator
		to Equation (\ref{EQUATION_LINEAR_WAVE_EQUATION_PDE}),
		we obtain a linear wave equation for $\partial_{t}^{n-1} z$ reading as
		\begin{equation}
			\partial_{t}^{2} \big(\partial_{t}^{n - 1} z\big) -
			\bar{a}_{ij} \partial_{x_{i}} \partial_{x_{j}} \big(\partial_{t}^{n - 1} z\big) = h^{n - 1} \text{ in } (0, \infty) \times \Omega,
			\label{EQUATION_LINEAR_WAVE_EQUATION_DIFFERENTIATED_WRT_TIME}
		\end{equation}
		where we used Leibniz' rule to compute
		\begin{equation}
			h^{n - 1} = \partial_{t}^{n - 1} \bar{f} +
			\sum_{m = 1}^{n - 1} \binom{n - 1}{m} \big(\partial_{t}^{m} \bar{a}_{ij}\big)
			\partial_{x_{i}} \partial_{x_{j}} \partial_{t}^{n - 1 - m} z.
			\label{EQUATION_FUNCTION_N_MINUS_ONE_DEFINITION}
		\end{equation}
		
		Multiplying Equation (\ref{EQUATION_LINEAR_WAVE_EQUATION_DIFFERENTIATED_WRT_TIME})
		in $L^{2}(\Omega)$ with $\partial_{t}^{n} z$, applying Green's formula
		and using Young's inequality, we obtain the estimate
		\begin{equation}
			\begin{split}
				\tfrac{1}{2} \partial_{t} \big(\|\partial_{t}^{n} &z(t, \cdot)\|_{L^{2}(\Omega)}^{2} +
				\|\bar{a}(t, \cdot) \nabla \partial_{t}^{n-1} z(t, \cdot)\|_{L^{2}(\Omega)}^{2}\big) \\
				&\leq \tfrac{1}{2} \big\|\big(\partial_{x_{i}} \bar{a}_{ij}(t, \cdot)\big) \partial_{x_{j}} z(t, \cdot)\|_{L^{2}(\Omega)}^{2} +
				\|\partial_{t}^{n} z(t, \cdot)\|_{L^{2}(\Omega)}^{2} + \tfrac{1}{2} \|h^{n-1}(t, \cdot)\|_{L^{2}(\Omega)}^{2}.
			\end{split}
			\notag
		\end{equation}
		Integrating w.r.t. to $t$ over $[0, T]$, exploiting Assumption \ref{ASSUMPTION_LINEAR_WAVE_EQUATION}.3
		and recalling the definition of $\phi_{0}$, we get
		\begin{equation}
			\begin{split}
				\|\partial_{t}^{n} z(t, \cdot)\|_{L^{2}(\Omega)}^{2} +
				&\|\partial_{t}^{n-1} z(t, \cdot)\|_{H^{1}(\Omega)}^{2}
				\leq
				C(\gamma_{0}, \phi_{0}) \big(\|\bar{z}^{n}\|_{L^{2}(\Omega)}^{2} +
				\|\bar{z}^{n - 1}\|_{H^{1}(\Omega)}^{2}\big) \\
				&+ C(\gamma_{0}, \phi)
				\int_{0}^{t} \big(\|\partial_{t}^{n} z(\tau, \cdot)\|_{L^{2}(\Omega)}^{2} +
				\|\partial_{t}^{n - 1} z(\tau, \cdot)\|_{H^{1}(\Omega)}^{2}\big) \mathrm{d}\tau \\
				&+ C(\gamma_{0}) \int_{0}^{t} \|h^{n - 1}(\tau, \cdot)\|_{L^{2}(\Omega)}^{2} \mathrm{d}\tau,
			\end{split}
			\label{EQUATION_DIRECT_ESTIMATE_FOR_LINEAR_WAVE_EQUATION_DIFFERENTIATED_WRT_TIME}
		\end{equation}
		where we used Sobolev imbedding theorem to estimate
		\begin{equation}
			\max_{0 \leq t \leq T} \|\partial_{x_{i}} \bar{a}_{ij}(t, \cdot)\|_{L^{\infty}(\Omega)} \leq
			C \max_{0 \leq t \leq T} \|\partial_{x_{i}} \bar{a}_{ij}(t, \cdot)\|_{H^{s - 1}(\Omega)} \leq \phi. \notag
		\end{equation}
		Here and in the sequel, $C$ denotes a positive generic constant which does not depend on
		the unknown function $z$.
		
		To derive an estimate for $\partial_{t}^{s} z$ and $\nabla \partial_{t}^{s-1} z$,
		we need to employ a molifier technique similar to \cite[Section A.2]{JiaRa2000}.
		First, we select $0 < \delta < \varepsilon < T$.
		Convolving Equations (\ref{EQUATION_LINEAR_WAVE_EQUATION_DIFFERENTIATED_WRT_TIME}) for $n = s - 1$
		with $\phi_{\delta}$, we obtain for $t \in [\varepsilon, T - \varepsilon]$
		\begin{equation}
			(\partial_{t}^{s} z)_{\delta} - \bar{a}_{ij} \big(\partial_{x_{i}} \partial_{x_{j}} \partial_{t}^{s - 2} z\big)_{\delta}
			= (h^{s - 2})_{\delta} + \eta^{s}(\cdot, \cdot; \delta)
			\label{EQUATION_LINEAR_WAVE_EQUATION_DIFFERENTIATED_WRT_TIME_MOLIFIED}
		\end{equation}
		with a correction term
		\begin{equation}
			\eta^{s}(t, \cdot; \delta) =
			\big(\bar{a}_{ij} \partial_{t}^{s - 2} \partial_{x_{i}} \partial_{x_{j}} z\big)_{\delta} -
			\bar{a}_{ij} \big(\partial_{t}^{s - 2} \partial_{x_{i}} \partial_{x_{j}} z\big)_{\delta}
			\text{ for } t \in [0, T]. \notag
		\end{equation}
		Differentiating Equation (\ref{EQUATION_LINEAR_WAVE_EQUATION_DIFFERENTIATED_WRT_TIME_MOLIFIED}) w.r.t. $t$
		\begin{equation}
			\partial_{t} (\partial_{t}^{s} z)_{\delta} - \partial_{t} \big(\bar{a}_{ij} \big(\partial_{x_{i}} \partial_{x_{j}} \partial_{t}^{s - 2} z\big)_{\delta}\big)
			= \partial_{t} (h^{s - 2})_{\delta} + \partial_{t} \eta^{s}(\cdot, \cdot; \delta),
			\notag
		\end{equation}
		multiplying the resulting equation in $L^{2}(\Omega)$ with $\partial_{t}^{s} z_{\delta}$,
		applying Green's formula and using Young's inequality, we estimate
		\begin{equation}
			\begin{split}
				\tfrac{1}{2} \partial_{t} \big(\|(\partial_{t}^{n} z)_{\delta}&(t, \cdot)\|_{L^{2}(\Omega)}^{2} +
				\|\bar{a}(t, \cdot) \nabla (\partial_{t}^{n-1} z)_{\delta}(t, \cdot)\|_{L^{2}(\Omega)}^{2}\big) \\
				&\leq \tfrac{1}{2} \big\|\big(\partial_{x_{i}} \bar{a}_{ij}(t, \cdot)\big) \partial_{x_{j}} z_{\delta}(t, \cdot)\big\|_{L^{2}(\Omega)}^{2} +
				\tfrac{1}{2} (2 + T^{-1/2}) \|(\partial_{t}^{n} z)_{\delta}(t, \cdot)\|_{L^{2}(\Omega)}^{2} \\
				&+ \tfrac{1}{2} T^{1/2} \|\partial_{t} h^{s - 2}_{\delta}(t, \cdot)\|_{L^{2}(\Omega)}^{2} +
				\tfrac{1}{2} \|\eta^{s}(t, \cdot; \delta)\|_{L^{2}(\Omega)}^{2}.
			\end{split}
			\label{EQUATION_LINEAR_WAVE_EQUATION_DIFFERENTIATED_WRT_TIME_MOLIFIED_MULTIPLIED_IN_X}
		\end{equation}
		Here, we exploited the fact $(\partial_{t} z)_{\delta} = \partial_{t} z_{\delta}$ if $w$ is once weakly differentiable w.r.t. $t$,
		$z_{\delta}|_{\partial \Omega} = z|_{\partial \Omega}$ and
		$(\partial_{x_{i}} z)_{\delta} = \partial_{x_{i}} z_{\delta}$ if $w$ is once weakly differentiable w.r.t. $x_{i}$.
		Now, integrating Equation (\ref{EQUATION_LINEAR_WAVE_EQUATION_DIFFERENTIATED_WRT_TIME_MOLIFIED_MULTIPLIED_IN_X})
		w.r.t. $t$ over $[\varepsilon, T - \varepsilon]$,
		letting $\delta$ and then $\varepsilon$ go to zero,
		exploiting the regularity of $z$, applying Lemma \ref{LEMMA_MOLLIFIER_PROPERTIES}
		and using Assumption \ref{ASSUMPTION_LINEAR_WAVE_EQUATION}.3 , we get
		\begin{equation}
			\begin{split}
				\|\partial_{t}^{s} &z(t, \cdot)\|_{L^{2}(\Omega)}^{2} +
				\|\partial_{t}^{s - 1} z(t, \cdot)\|_{H^{1}(\Omega)}^{2}
				\leq
				C(\gamma_{0}, \phi_{0}) \big(\|\bar{z}^{s}\|_{L^{2}(\Omega)}^{2} +
				\|\bar{z}^{s - 1}\|_{H^{1}(\Omega)}^{2}\big) \\
				&+ C(\gamma_{0}, \phi) (1 + T^{-1/2})
				\int_{0}^{t} \big(\|\partial_{t}^{n} z(\tau, \cdot)\|_{L^{2}(\Omega)}^{2} +
				\|\partial_{t}^{n - 1} z(\tau, \cdot)\|_{H^{1}(\Omega)}^{2}\big) \mathrm{d}\tau \\
				&+ C(\gamma_{0}) T^{1/2} \int_{0}^{t} \|h^{s - 2}(\tau, \cdot)\|_{L^{2}(\Omega)}^{2} \mathrm{d}\tau.
			\end{split}
			\label{EQUATION_DIRECT_ESTIMATE_FOR_LINEAR_WAVE_EQUATION_DIFFERENTIATED_WRT_TIME_HIGHEST_ENERGY_LEVEL}
		\end{equation}
		Combining Equations (\ref{EQUATION_DIRECT_ESTIMATE_FOR_LINEAR_WAVE_EQUATION_DIFFERENTIATED_WRT_TIME}) and
		(\ref{EQUATION_DIRECT_ESTIMATE_FOR_LINEAR_WAVE_EQUATION_DIFFERENTIATED_WRT_TIME_HIGHEST_ENERGY_LEVEL})
		leads to
		\begin{equation}
			\begin{split}
				\sum_{n = 1}^{s} &\big(\|\partial_{t}^{n} z(t, \cdot)\|_{L^{2}(\Omega)}^{2} +
				\|\partial_{t}^{m - 1} z(t, \cdot)\|_{H^{1}(\Omega)}^{2}\big)
				\leq
				C(\gamma_{0}, \phi_{0}) \Lambda_{0} \\
				&+ C(\gamma_{0}, \phi) (1 + T^{-1/2}) \int_{0}^{t} \|\bar{D}^{s} z(\tau, \cdot)\|_{L^{2}(\Omega)}^{2} \mathrm{d}\tau \\
				&+ C(\gamma_{0}) \sum_{n = 1}^{s - 1} \int_{0}^{t} \|h^{n - 1}(\tau, \cdot)\|_{L^{2}(\Omega)}^{2} \mathrm{d}\tau
				+ C(\gamma_{0}) T^{1/2} \int_{0}^{t} \|\partial_{t} h^{s-2}(\tau, \cdot)\|_{L^{2}(\Omega)}^{2} \mathrm{d}\tau.
			\end{split}
			\label{EQUATION_DIRECT_ESTIMATE_FOR_LINEAR_WAVE_EQUATION_DIFFERENTIATED_WRT_TIME_COMBINED}
		\end{equation}
		Using Sobolev imbedding theorem
		\begin{equation}
			W^{1, 2}(\Omega) \hookrightarrow L^{6}(\Omega) \hookrightarrow L^{4}(\Omega) \text{ for } d \leq 3, \notag
		\end{equation}
		we can estimate
		\begin{align*}
			\sum_{m = 1}^{n - 1} &\|(\partial_{t}^{m} \bar{a}_{ij}) \partial_{t}^{n - 1 - m} \partial_{x_{i}} \partial_{x_{j}} z\big\|_{L^{2}(\Omega)}^{2}
			\leq \sum_{m = 1}^{\min\{n - 1, 1\}} \|\partial_{t}^{m} \bar{a}_{ij}\|_{L^{\infty}(\Omega)}^{2} \|\partial_{t}^{n - 1 - m} z\|_{H^{2}(\Omega)}^{2} \\
			&+ \sum_{m = \min\{n, 2\}}^{n - 1} \|\partial_{t}^{m} \bar{a}_{ij}\|_{L^{4}(\Omega)}^{2}
			\|\partial_{t}^{n - 1 - m} z\|_{W^{2, 4}(\Omega)}^{2} \\
			&\leq C(\phi) \|\bar{D}^{n} z\|_{L^{2}(\Omega)}^{2}
			+ C \sum_{m = \min\{n, 2\}}^{n - 1} \|\partial_{t}^{m} \bar{a}_{ij}\|_{H^{1}(\Omega)}^{2}
			\|\partial_{t}^{n - 1 - m} z\|_{W^{3, 2}(\Omega)}^{2} \\
			&\leq C(\phi) \|\bar{D}^{n} z\|_{L^{2}(\Omega)}^{2}
			+ C \sum_{m = \min\{n, 2\}}^{n - 1} \|\partial_{t}^{m} \bar{a}_{ij}\|_{H^{s - 1 - m}(\Omega)}^{2}
			\|\bar{D}^{n-m+2} z\|_{L^{2}(\Omega)}^{2} \\
			&\leq C(\phi) \|\bar{D}^{s-1} z\|_{L^{2}(\Omega)}^{2}.
		\end{align*}
		Recalling Equation (\ref{EQUATION_FUNCTION_N_MINUS_ONE_DEFINITION}), we obtain
		\begin{equation}
			\begin{split}
				\int_{0}^{t} \|h^{n-1}(\tau, \cdot)\|_{L^{2}(\Omega)}^{2} \mathrm{d}\tau
				&\leq t \max_{0 \leq \tau \leq t} \|\partial_{t}^{n-1} \bar{f}(\tau, \cdot)\|_{L^{2}(\Omega)}^{2} \\
				&+ C(\phi) \int_{0}^{t} \|\bar{D}^{s - 1} z(t, \cdot)\|_{L^{2}(\Omega)}^{2} \mathrm{d}\tau
			\end{split}
			\label{EQUATION_FUNCTION_H_N_MINUS_ONE_ESTIMATE}
		\end{equation}
		for $n = 1, \dots, s - 1$ and $t \in [0, T]$.
		Similarly, for $t \in [0, T]$,
		\begin{equation}
			\int_{0}^{t} \|\partial_{t} h^{s - 2}(\tau, \cdot)\|_{L^{2}(\Omega)}^{2} \mathrm{d}\tau
			\leq \int_{0}^{t} \|\partial_{t}^{s - 1} \bar{f}(t, \cdot)\|_{L^{2}(\Omega)}^{2} \mathrm{d}\tau +
			C(\phi) \int_{0}^{t} \|\bar{D}^{s} z(\tau, \cdot)\|_{L^{2}(\Omega)}^{2} \mathrm{d}\tau.
			\label{EQUATION_FUNCTION_H_S_MINUS_TWO_TIME_DERIVATIVE_ESTIMATE}
		\end{equation}
		Now, combining Equation (\ref{EQUATION_DIRECT_ESTIMATE_FOR_LINEAR_WAVE_EQUATION_DIFFERENTIATED_WRT_TIME_COMBINED})
		as well as Equations (\ref{EQUATION_FUNCTION_H_N_MINUS_ONE_ESTIMATE}) and
		(\ref{EQUATION_FUNCTION_H_S_MINUS_TWO_TIME_DERIVATIVE_ESTIMATE}), we arrive at
		\begin{equation}
			\begin{split}
				\sum_{n = 1}^{s} \big(&\|\partial_{t}^{n} z(t, \cdot)\|_{L^{2}(\Omega)}^{2} +
				\|\partial_{t}^{n - 1} z(t, \cdot)\|_{H^{1}(\Omega)}^{2}\big) \\
				&\leq C(\gamma_{0}, \phi_{0}) \Lambda_{0}
				+ C(\gamma_{0}, \phi) (1 + T^{1/2} + T^{-1/2}) \int_{0}^{t} \|\bar{D}^{s} z(\tau, \cdot)\|_{L^{2}(\Omega)}^{2} \mathrm{d}\tau
				\text{ for } t \in [0, T].
			\end{split}
			\label{EQUATION_LINEAR_WAVE_EQUATION_HIGHER_ENERGY_ESTIMATES_WRT_TIME}
		\end{equation}
		
		To finish the proof, we need to establish estimates for the remaining derivatives.
		For $n = 1, \dots, s - 1$, consider Equation (\ref{EQUATION_LINEAR_WAVE_EQUATION_DIFFERENTIATED_WRT_TIME}).
		Application of the elliptic regularity (viz. Assumption \ref{ASSUMPTION_LINEAR_WAVE_EQUATION}.4)
		with $m = s - n - 1$ yields
		\begin{equation}
			\begin{split}
				\|\partial_{t}^{n - 1} z(t, \cdot)\|_{H^{m + 2}(\Omega)}^{2} \leq
				\gamma_{1} \big(&\|\partial_{t}^{n + 1} z(t, \cdot)\|_{H^{m}(\Omega)}^{2} +
				\|h^{n - 1}(t, \cdot)\|_{H^{m}(\Omega)}^{2} \\
				+ &\|\partial_{t}^{n - 1} z(t, \cdot)\|_{H^{m}(\Omega)}^{2}\big) \text{ for } t \in [0, T].
			\end{split}
			\label{EQUATION_LINEAR_WAVE_EQUATION_DIFFERENTIATED_WRT_TIME_ELLIPTIC_ESTIMATE}
		\end{equation}
		Using Assumption \ref{ASSUMPTION_LINEAR_WAVE_EQUATION}.1,
		Sobolev imbedding theorem and Jensen's inequality
		and applying the fundamental theorem of calculus to the second term
		on the right-hand side of Equation (\ref{EQUATION_LINEAR_WAVE_EQUATION_DIFFERENTIATED_WRT_TIME_ELLIPTIC_ESTIMATE}),
		we obtain
		\begin{equation}
			\begin{split}
				\|h^{n - 1}&(t, \cdot)\|_{H^{m}(\Omega)}^{2} \leq \\
				&\leq C(\phi_{0}) \Lambda_{0} + C T \sum_{k = 1}^{n - 1} \int_{0}^{t} \big\|\partial_{t}\big((\partial_{t}^{k} \bar{a}_{ij})
				\partial_{t}^{n - 1 - k} \partial_{x_{i}} \partial_{x_{j}} z\big)\big\|_{H^{m}(\Omega)}^{2}(\tau, \cdot) \mathrm{d}\tau \\
				&\leq C(\phi_{0}) \Lambda_{0} + C(\phi) T \int_{0}^{t} \|\bar{D}^{s} z(\tau, \cdot)\|_{L^{2}(\Omega)}^{2} \mathrm{d}\tau.
			\end{split}
			\label{EQUATION_LINEAR_WAVE_EQUATION_FUNCTION_H_N_MINUS_1_HIGHER_ENERGY_ESTIMATE}
		\end{equation}
		Note that this estimate is only true if $s \geq 3$,
		which is trivially satisfied due to Assumption \ref{ASSUMPTION_LINEAR_WAVE_EQUATION}.
		Combining Equations (\ref{EQUATION_LINEAR_WAVE_EQUATION_HIGHER_ENERGY_ESTIMATES_WRT_TIME}),
		(\ref{EQUATION_LINEAR_WAVE_EQUATION_DIFFERENTIATED_WRT_TIME_ELLIPTIC_ESTIMATE}) and
		(\ref{EQUATION_LINEAR_WAVE_EQUATION_FUNCTION_H_N_MINUS_1_HIGHER_ENERGY_ESTIMATE})
		finally yields
		\begin{equation}
			\begin{split}
				\|\bar{D}^{s} z(t, \cdot)\|_{L^{2}(\Omega)}^{2} &\leq
				C(\gamma_{0}, \gamma_{1}, \phi_{0}) \Lambda_{0} \\
				&+ C(\gamma_{0}, \gamma_{1}, \phi) (1 + T^{1/2} + T + T^{-1/2})
				\int_{0}^{t} \|\bar{D}^{s} z(\tau, \cdot)\|_{L^{2}(\Omega)}^{2} \mathrm{d}\tau
			\end{split}
			\notag
		\end{equation}
		for any $t \in [0, T]$.
		The claim is now a direct consequence of Gronwall's inequality.
	\end{proof}
	
	\begin{remark}
		It should be pointed out that our proof differs from that of Jiang and Racke \cite{JiaRa2000}
		as we can carry it out at the energy level $s \geq [\tfrac{d}{2}] + 2$
		whereas Jiang and Racke \cite{JiaRa2000} require $s \geq [\tfrac{d}{2}] + 3$.
		This ``improvement'' is possible since Theorem \ref{THEOREM_APPENDIX_LINEAR_WAVE_EQUATION}
		is applied to a quasilinear wave equation
		with the quasilinearity depending on the function itself and not its gradient.
		A comment on this issue can also be found in \cite[Remark 14.4]{Ka1985}.
	\end{remark}
	
	\subsection{Linear Heat Equation}
	In this appendix section, we consider an initial-boundary-value problem with Dirichlet boundary conditions for
	the linear homogeneous isotropic heat equation reading as
	\begin{subequations}
	\begin{align}
		\theta_{t}(t, x) - a \triangle \theta(t, x) &= \bar{g}(t, x)\phantom{0} \text{ for } (t, x) \in (0, T) \times \Omega,
		\label{EQUATION_LINEAR_HEAT_EQUATION_PDE} \\
		\theta(t, x) &= 0\phantom{\bar{g}(t, x)} \text{ for } (t, x) \in [0, T] \times \partial \Omega,
		\label{EQUATION_LINEAR_HEAT_EQUATION_BC} \\
		\theta(0, x) &= \theta^{0}(x)\phantom{0}\;\, \text{ for } x \in \Omega.
		\label{EQUATION_LINEAR_HEAT_EQUATION_IC}
	\end{align}
	\end{subequations}
	We present a well-posedness result for Equations (\ref{EQUATION_LINEAR_HEAT_EQUATION_PDE})--(\ref{EQUATION_LINEAR_HEAT_EQUATION_IC}).
	In contrast to \cite[Chapter A.3]{JiaRa2000}, our proof is based on the operator semigroup theory,
	in particular, the maximal $L^{2}$-regularity theory, which is equivalent to analyticity of the semigroup generated by Dirichlet-Laplacian.
	A different technique is employed here to obtain a higher solution regularity needed for the fixed-point iteration in Theorem \ref{THEOREM_LOCAL_EXISTENCE}.
	Besides, the topologies used for the data $(\theta^{0}, \bar{g})$ and the solution $\theta$ differ from those in \cite[Chapter A.3]{JiaRa2000}.
	
	\begin{assumption}
		\label{ASSUMPTION_LINEAR_HEAT_EQUATION}
		Let $s \geq 2$ and $a > 0$.
		Assume the following assumptions are satisfied.
		\begin{enumerate}
			\item {\em Right-hand side regularity: }
			For $k = 0, 1, \dots, s - 1$,
			$\partial_{t}^{k} \bar{g} \in C^{0}\big([0, T], H^{s - 1 - k}(\Omega)\big)$.
			Recall $H^{0}_{0}(\Omega) \equiv H^{0}(\Omega) := L^{2}(\Omega)$.
			
			\item {\em Regularity and compatibility conditions: }
			For $k = 0, 1, \dots, s - 2$, let
			\begin{equation}
				\bar{\theta}^{k} \in H^{s + 1 - k}(\Omega) \cap H^{1}_{0}(\Omega) \qquad \text{ and } \qquad
				\bar{\theta}^{s - 1} \in H^{1}_{0}(\Omega), \notag
			\end{equation}
			where $\bar{\theta}^{k}$'s are given by
			\begin{align*}
				\bar{\theta}^{l}(x) = a^{l} \triangle^{l} \theta^{0}(x) + \sum_{n = 0}^{l - 1} a^{n} \triangle^{n} \theta^{0}(x) \, \partial_{t}^{l - 1 - n} \bar{g}(0, x)
				\text{ for } x \in \Omega \text{ and } l = 0, \dots, s - 1. \notag
			\end{align*}
		\end{enumerate}
	\end{assumption}
	
	\begin{theorem}
		\label{THEOREM_APPENDIX_LINEAR_HEAT_EQUATION}
		Under Assumption \ref{ASSUMPTION_LINEAR_HEAT_EQUATION},
		the initial-boundary-value problem
		(\ref{EQUATION_LINEAR_HEAT_EQUATION_PDE})--(\ref{EQUATION_LINEAR_HEAT_EQUATION_IC})
		possesses a unique classical solution satisfying
		\begin{align*}
			\partial_{t}^{k} \theta &\in C^{0}\big([0, T], H^{s + 1 - k}(\Omega) \cap H^{1}_{0}(\Omega)\big) \text{ for } k = 0, \dots, s - 2, \\
			\partial_{t}^{s - 1} \theta &\in C^{0}\big([0, T], H^{1}_{0}(\Omega)\big) \cap L^{2}\big(0, T; H^{2}(\Omega) \cap H^{1}_{0}(\Omega)\big)
			\text{ and } \partial_{t}^{s} \theta \in L^{2}\big(0, T; L^{2}(\Omega)\big). \notag
		\end{align*}
		Moreover, there exists a constant $C > 0$ such that
		\begin{align*}
			\sum_{k = 0}^{s - 2} \max_{0 \leq t \leq T} \|\partial_{t}^{k} \theta(t, \cdot)\|_{H^{s + 1 - k}(\Omega)}^{2} &+
			\max_{0 \leq t \leq T} \|\partial_{t}^{s - 1} \theta(t, \cdot)\|_{H^{1}(\Omega)}^{2} \\
			&+ \int_{0}^{T} \big(\|\triangle \partial_{t}^{s - 1} \theta(t, \cdot)\|_{L^{2}(\Omega)}^{2} + \|\partial_{t}^{s} \theta(t, \cdot)\|_{L^{2}(\Omega)}^{2}\big) \mathrm{d}t
			\leq C \Theta_{0},
		\end{align*}
		where
		\begin{equation}
			\Theta_{0} =
			(1 + T) \Big(
			\sum_{k = 0}^{s - 2} \|\bar{\theta}^{k}\|_{H^{s + 1 - k}(\Omega)}^{2} + \|\bar{\theta}^{s - 1}\|_{H^{1}(\Omega)}^{2} +
			\sum_{k = 0}^{s - 1} \max_{0 \leq t \leq T} \|\partial_{t}^{k} \bar{g}(t, \cdot)\|_{H^{s - 1 - k}(\Omega)}^{2} \mathrm{d}t\Big).
			\notag
		\end{equation}
	\end{theorem}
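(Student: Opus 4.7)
\medskip
\noindent\textbf{Proof proposal for Theorem \ref{THEOREM_APPENDIX_LINEAR_HEAT_EQUATION}.}
The plan is to exploit the analyticity of the semigroup generated by the Dirichlet Laplacian $A = -a\triangle$ on $L^2(\Omega)$ together with the associated maximal $L^2$-regularity. Since $A$ is self-adjoint and strictly positive with $D(A) = H^2(\Omega) \cap H^1_0(\Omega)$, standard operator theory yields an analytic $C_0$-semigroup $(e^{-tA})_{t \geq 0}$, and in the Hilbert-space setting analyticity is equivalent to maximal $L^2$-regularity on any finite interval $(0,T)$. Moreover, by elliptic regularity with $\partial\Omega \in C^s$, the fractional powers of $A$ satisfy $D(A^{k/2}) \hookrightarrow H^{k}(\Omega)$ with Dirichlet-type boundary conditions, which provides the natural scale for our estimates.

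\medskip
\noindent\textbf{Step 1: a hierarchy of heat equations.} For $k = 0, 1, \dots, s-1$, set formally $\psi^{(k)} := \partial_t^k \theta$ and differentiate Equation \eqref{EQUATION_LINEAR_HEAT_EQUATION_PDE} $k$ times in $t$, so that $\psi^{(k)}$ satisfies
\begin{equation}
\psi^{(k)}_t - a \triangle \psi^{(k)} = \partial_t^k \bar{g}, \quad \psi^{(k)}|_{\partial\Omega} = 0, \quad \psi^{(k)}(0,\cdot) = \bar{\theta}^{k}, \notag
\end{equation}
where the identification of $\psi^{(k)}(0,\cdot)$ with $\bar{\theta}^{k}$ is exactly the content of Assumption \ref{ASSUMPTION_LINEAR_HEAT_EQUATION}.2 (and follows recursively from the PDE). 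The compatibility conditions guarantee that each $\bar{\theta}^{k}$ lies in the appropriate interpolation space so that the above Cauchy problem is well-posed at the prescribed level.

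\medskip
\noindent\textbf{Step 2: regularity at intermediate levels.} Proceeding downward from $k = s-2$ to $k = 0$, I will apply the standard analytic-semigroup estimates to $\psi^{(k)}$. Since $\bar{\theta}^{k} \in H^{s+1-k}(\Omega) \cap H^1_0(\Omega) \subset D(A^{(s+1-k)/2})$ and the inhomogeneity $\partial_t^k \bar{g} \in C^0([0,T], H^{s-1-k}(\Omega))$, the semigroup solution
\begin{equation}
\psi^{(k)}(t) = e^{-tA}\bar{\theta}^{k} + \int_0^t e^{-(t-\tau)A}\,\partial_t^k \bar{g}(\tau)\,\mathrm{d}\tau \notag
\end{equation}
belongs to $C^0([0,T], H^{s+1-k}(\Omega) \cap H^1_0(\Omega))$. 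The corresponding energy bound in terms of $\Theta_0$ is straightforward by the contraction properties of $(e^{-tA})$ and Young's inequality in time.

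\medskip
\noindent\textbf{Step 3: the top level via maximal $L^2$-regularity.} For $k = s-1$, the initial datum $\bar{\theta}^{s-1}$ lies only in $H^1_0(\Omega) = D(A^{1/2}) = (L^2(\Omega), D(A))_{1/2,2}$, which is precisely the trace space associated with maximal $L^2$-regularity. With $\partial_t^{s-1}\bar{g} \in C^0([0,T], L^2(\Omega)) \hookrightarrow L^2(0,T; L^2(\Omega))$, the maximal $L^2$-regularity of $A$ gives
\begin{equation}
\partial_t^{s-1}\theta \in C^0([0,T], H^1_0(\Omega)) \cap L^2(0,T; H^2(\Omega)\cap H^1_0(\Omega)), \qquad \partial_t^s \theta \in L^2(0,T; L^2(\Omega)), \notag
\end{equation}
together with the corresponding isomorphism estimate controlling the norms in $\Theta_0$. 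This step is where the $L^2$ (rather than $C^0$) time-regularity in the conclusion appears naturally.

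\medskip
\noindent\textbf{Step 4: stitching together and uniqueness.} The solutions $\psi^{(k)}$ obtained in Steps 2 and 3 must be identified with time derivatives of a single function $\theta$. This follows because $\psi^{(0)}$ is the unique mild solution and differentiating the Duhamel formula in $t$ reproduces $\psi^{(k)}$ for $k \geq 1$, the compatibility conditions ensuring that no boundary terms in time arise. Uniqueness of $\theta$ is immediate from the semigroup formula at level $k=0$. Collecting the estimates from Steps 2 and 3 yields the bound in terms of $\Theta_0$, with the factor $(1+T)$ absorbing the $L^2(0,T;\cdot)$-norms of the inhomogeneity.

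\medskip
\noindent\textbf{Main obstacle.} The delicate point is Step 3: one must verify that the trace space for maximal $L^2$-regularity of $A$ on $L^2(\Omega)$ really is $H^1_0(\Omega)$ (not $H^1(\Omega)$), i.e., that the Dirichlet boundary condition is encoded in the real interpolation space $(L^2(\Omega), D(A))_{1/2,2}$. This follows from the characterisation of fractional powers of the Dirichlet Laplacian but requires some care, and it is precisely this point that justifies imposing $\bar{\theta}^{s-1} \in H^1_0(\Omega)$ (rather than merely $H^1(\Omega)$) in Assumption \ref{ASSUMPTION_LINEAR_HEAT_EQUATION}.2.
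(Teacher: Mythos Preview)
Your overall architecture matches the paper's: analytic semigroup for the Dirichlet Laplacian, maximal $L^2$-regularity at the top level $k=s-1$, and a hierarchy of time-differentiated heat equations. Steps 1, 3 and 4 are essentially what the paper does (the paper is more explicit in Step 4, integrating the top-level solution $k$ times and showing the result agrees with $\theta$ via an energy identity, but your Duhamel-differentiation sketch is morally the same).

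The gap is in Step 2. Two issues:
\begin{itemize}
\item The inclusion $H^{s+1-k}(\Omega)\cap H^1_0(\Omega)\subset D(A^{(s+1-k)/2})$ is false in general: domains of integer powers of the Dirichlet Laplacian carry iterated boundary conditions ($u=\triangle u=\cdots=0$ on $\partial\Omega$), and Assumption \ref{ASSUMPTION_LINEAR_HEAT_EQUATION}.2 only gives $\bar{\theta}^k\in H^1_0$, nothing more. So you cannot place the initial data in a high fractional domain and run the semigroup there.
\item Even on the base space, a forcing that is merely $C^0$ in time does not produce a solution in $C^0([0,T],D(A))$ via Duhamel; one needs H\"older continuity in time, or extra spatial regularity of the forcing, or an additional time derivative. ``Standard analytic-semigroup estimates'' do not deliver the claimed $C^0([0,T],H^{s+1-k})$ directly.
\end{itemize}

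The paper repairs exactly this by splitting Step 2 into two passes. First (upward in $k$), it exploits that for $k\le s-2$ the forcing has one extra time derivative, $\partial_t^k\bar{g}\in C^1([0,T],L^2(\Omega))$, so classical $C_0$-semigroup theory gives $\partial_t^k\theta\in C^0([0,T],H^2\cap H^1_0)\cap C^1([0,T],L^2)$ with initial datum only in $H^2\cap H^1_0$. Second (downward in $k$, from $k=s-2$ to $0$), it bootstraps space regularity via the elliptic identity $A\,\partial_t^k\theta=\tfrac{1}{a}\big(\partial_t^k\bar{g}-\partial_t^{k+1}\theta\big)$: since the right-hand side lies in $C^0([0,T],H^{s-1-k})$ by the previous step and Assumption \ref{ASSUMPTION_LINEAR_HEAT_EQUATION}.1, and $A^{-1}:H^{s-1-k}\to H^{s+1-k}\cap H^1_0$ is bounded, this yields $\partial_t^k\theta\in C^0([0,T],H^{s+1-k}\cap H^1_0)$. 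Your ``proceeding downward'' remark suggests you sensed this mechanism, but the argument you wrote does not actually use it; replacing your direct Duhamel claim with this two-pass scheme closes the gap.
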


	\begin{proof}
		Let $A := \triangle_{D}$ denote the $L^{2}$-realization of the Dirichlet-Laplacian with the domain
		\begin{equation}
			D(A) := \big\{\theta \in H^{1}_{0}(\Omega) \,\big|\, \triangle \theta \in L^{2}(\Omega)\big\}
			= H^{2}(\Omega) \cap H^{1}_{0}(\Omega), \notag
		\end{equation}
		where the latter identity follows by standard elliptic regularity theory.
 		(Note the difference in sign over Sections \ref{SECTION_EXISTENCE_AND_UNIQUENESS} and \ref{SECTION_LONG_TIME_BEHAVIOR}.)
		Using Lax \& Milgram lemma to prove the resolvent identity
		\begin{equation}
			\sup_{\lambda \in \mathbb{C} \backslash (-\infty, 0]}
			\big\|\lambda (\lambda - A)^{-1}\big\|_{L(L^{2}(\Omega))} < \infty, \notag
		\end{equation}
		we conclude that $a A$ generates a bounded analytic semigroup of angle $\frac{\pi}{2}$ on $L^{2}(\Omega)$.
		Due to the Hilbert space structure, \cite[1.7 Corollary]{KuWe2004} further implies $a A$ has the maximal $L^{p}$-regularity property.
		
		Consider the solution map $\mathscr{S}$ sending $(\tilde{\theta}^{0}, \tilde{g})$ to the (mild) solution $\tilde{\theta}$ of
		\begin{equation}
			\tilde{\theta}_{t} - a A \tilde{\theta} = \tilde{g} \text{ in } (0, T), \quad
			\tilde{\theta}(0, \cdot) = \tilde{\theta}^{0}. \label{EQUATION_CAUCHY_PROBLEM_LINEAR_HEAT_EQUATION_MAXREG}
		\end{equation}
		By classic $C_{0}$-semigroup theory and the maximal $L^{p}$-regularity theory, we have:
		\begin{itemize}
			\item The mapping
			\begin{equation}
				\mathscr{S} \colon H^{1}_{0}(\Omega) \times L^{2}\big(0, T; L^{2}(\Omega)\big) \to
				H^{1}\big(0, T; L^{2}(\Omega)\big) \cap L^{2}\big(0, T; H^{2}(\Omega) \cap H^{1}_{0}(\Omega)\big) \label{EQUATION_MAXREG_SOLUTION_MAP}
			\end{equation}
			is well-defined as an isomorphism between the two spaces.
			
			\item The mapping
			\begin{equation}
				\begin{split}
					\mathscr{S} \colon \big(H^{2}(\Omega) \cap H^{1}_{0}(\Omega)\big) \times &C^{1}\big([0, T], L^{2}(\Omega)\big) \to \\
					&C^{1}\big([0, T], L^{2}(\Omega)\big) \cap C^{0}\big([0, T], H^{2}(\Omega) \cap H^{1}_{0}(\Omega)\big)
				\end{split}
				\label{EQUATION_CLASSICAL_SOLUTION_MAP}
			\end{equation}
			is well-defined and continuous in respective topologies.
		\end{itemize}
		\noindent {\em Existence and uniqueness at basic level:}
		On the strength of Assumption \ref{ASSUMPTION_LINEAR_HEAT_EQUATION},
		we (in particular) have $\theta^{0} \in H^{2}_{0}(\Omega) \cap H^{1}_{0}$ and $\bar{g} \in C^{1}\big([0, T], L^{2}(\Omega)\big)$.
		Hence, there exists a unique classical solution
		\begin{equation}
			\theta \in C^{1}\big([0, T], L^{2}(\Omega)\big) \cap C^{0}\big([0, T], H^{2}(\Omega) \cap H^{1}_{0}(\Omega)\big)
		\end{equation}
		to Equations (\ref{EQUATION_LINEAR_HEAT_EQUATION_PDE})--(\ref{EQUATION_LINEAR_HEAT_EQUATION_IC}). 
		\medskip \\
		\noindent {\em Higher regularity in time:}
		We argue by induction over $k = 1, \dots, s - 2$ starting at $k = 1$.
		Applying $\partial_{t}^{k}$ to Equation (\ref{EQUATION_LINEAR_HEAT_EQUATION_PDE})
		and using Assumption \ref{ASSUMPTION_LINEAR_HEAT_EQUATION}.2, we obtain (in distributional sense)
		\begin{equation}
			\partial_{t} (\partial_{t}^{k} \theta) - a A (\partial_{t}^{k} \theta) = \partial_{t}^{k} \bar{g} \text{ in } (0, T). 
			\label{EQUATION_CAUCHY_PROBLEM_LINEAR_HEAT_EQUATION_TIME_DIFFERENTIATED}
		\end{equation}
		This motivates to consider Equation (\ref{EQUATION_CAUCHY_PROBLEM_LINEAR_HEAT_EQUATION_MAXREG}) with
		\begin{equation}
			\tilde{\theta}^{0} = \bar{\theta}^{k} \in H^{2}(\Omega) \cap H^{1}_{0}(\Omega) \text{ and }
			\tilde{g} = \partial_{t}^{k} \bar{g} \in C^{1}\big([0, T], L^{2}(\Omega)\big).
			\label{EQUATION_TILDE_THETA_0_TILE_G}
		\end{equation}
		By Equation (\ref{EQUATION_CLASSICAL_SOLUTION_MAP}),
		\begin{equation}
			\tilde{\theta} \in C^{0}\big([0, T], H^{2}(\Omega) \cap H^{1}_{0}(\Omega)\big) \cap C^{1}\big([0, T], L^{2}(\Omega)\big). \notag
		\end{equation}
		We now show the function
		\begin{equation}
			\bar{\theta}(t, \cdot) = \sum_{l = 0}^{k - 1} \frac{t^{l}}{l!} \bar{\theta}^{l} + \int_{0}^{t} \int_{0}^{t_{1}} \dots \int_{0}^{t_{k-1}}
			\tilde{\theta}(\tau, \cdot) \mathrm{d}\tau \mathrm{d}t_{k-1} \dots \mathrm{d}t_{1} \notag
		\end{equation}
		coincides with $\partial_{t}^{k} \theta$.
		By construction, $\bar{\theta}$ satisfies
		\begin{equation}
			\partial_{t} (\partial_{t}^{k} \bar{\theta}) - a A (\partial_{t}^{k} \bar{\theta}) = \partial_{t}^{k} \bar{g} \text{ in } (0, T). 
			\label{EQUATION_CAUCHY_PROBLEM_LINEAR_HEAT_EQUATION_BAR_THETA_PLUGGED_IN}
		\end{equation}
		Subtracting Equation (\ref{EQUATION_CAUCHY_PROBLEM_LINEAR_HEAT_EQUATION_BAR_THETA_PLUGGED_IN}) from Equation (\ref{EQUATION_CAUCHY_PROBLEM_LINEAR_HEAT_EQUATION_TIME_DIFFERENTIATED}), 
		multiplying with $\partial_{t}^{k - 1} (\bar{\theta} - \theta)(t, \cdot)$ in $L^{2}(\Omega)$ and using Green's formula, we get
		\begin{equation}
			\tfrac{1}{2} \partial_{t} \big\|\partial_{t}^{k-1} (\bar{\theta} - \theta)(t, \cdot)\big\|_{L^{2}(\Omega)} +
			a \big\|\nabla \partial_{t}^{k-1} (\bar{\theta} - \theta)(t, \cdot)\big\|_{L^{2}(\Omega)} = 0.
			\label{EQUATION_BAR_THETA_MINUS_THETA_ENERGY_IDENTITY}
		\end{equation}
		This along with the fact $\partial_{t}^{l} \bar{\theta}(0, \cdot) \equiv \bar{\theta}^{l} \equiv \partial_{t}^{l} \theta(0, \cdot)$ for $l = 0, \dots, k - 1$
		enables us to use the Gronwall's inequality together with the fundamental theorem of calculus to deduce $\bar{\theta} \equiv \theta$.
		Therefore, we have shown
		\begin{equation}
			\partial_{t}^{k} \theta \equiv \partial_{t}^{k} \bar{\theta} \equiv \tilde{\theta} \in C^{0}\big([0, T], H^{2}(\Omega) \cap H^{1}_{0}(\Omega)\big) \cap C^{1}\big([0, T], L^{2}(\Omega)\big).
			\label{EQUATION_PARTIAL_T_K_THETA_PRELIMIARY_REGULARITY}
		\end{equation}
		
		For $k = s - 1$, a slightly modified argument needs to be utilized.
		In this case, we only have
		\begin{equation}
			\tilde{\theta}^{0} = \theta^{s - 1} \in H^{1}_{0}(\Omega) \text{ and }
			\tilde{g} = \partial_{t}^{s - 1} \bar{g} \in C^{0}\big([0, T], L^{2}(\Omega)\big) \hookrightarrow L^{2}\big(0, T; L^{2}(\Omega)\big) \notag
		\end{equation}
		to plug into Equation (\ref{EQUATION_CAUCHY_PROBLEM_LINEAR_HEAT_EQUATION_MAXREG}).
		Instead of Equation (\ref{EQUATION_CLASSICAL_SOLUTION_MAP}), we use (\ref{EQUATION_MAXREG_SOLUTION_MAP}) to infer
		\begin{equation}
			\tilde{\theta} \in H^{1}\big(0, T; L^{2}(\Omega)\big) \cap L^{2}\big(0, T; H^{2}(\Omega) \cap H^{1}_{0}(\Omega)\big).
		\end{equation}
		Defining
		\begin{equation}
			\bar{\theta}(t, \cdot) = \sum_{l = 0}^{s - 2} \frac{t^{l}}{l!} \bar{\theta}^{l} + \int_{0}^{t} \int_{0}^{t_{1}} \dots \int_{0}^{t_{s-2}}
			\tilde{\theta}(\tau, \cdot) \mathrm{d}\tau \mathrm{d}t_{s-2} \dots \mathrm{d}t_{1}, \notag
		\end{equation}
		by the same kind of argument, we have $\bar{\theta} \equiv \theta$ and, therefore,
		\begin{equation}
			\partial_{t}^{s - 1} \theta \equiv \tilde{\theta} \in H^{1}\big(0, T; L^{2}(\Omega)\big) \cap L^{2}\big(0, T; H^{2}(\Omega) \cap H^{1}_{0}(\Omega)\big)
			\hookrightarrow C^{0}\big(0, T; H^{1}_{0}(\Omega)\big).
			\label{EQUATION_PARTIAL_T_K_THETA_PRELIMIARY_REGULARITY_S_MINUS_1}
		\end{equation}
		\noindent {\em Higher regularity in space:}
		For $k = 0, \dots, s - 2$, applying $\partial_{t}^{k}$ to Equation (\ref{EQUATION_LINEAR_HEAT_EQUATION_PDE}), we observe
		\begin{equation}
			A \partial_{t}^{k} \theta = -\tfrac{1}{a} \partial_{t}^{k + 1} \theta + \tfrac{1}{a} \partial_{t}^{k} \bar{g}.
			\label{EQUATION_PARTIAL_T_K_THETA_ELLIPTIC_BOOST}
		\end{equation}
		Hence, using the fact $A$ is an isomorphism between $H^{s + 1 - k}(\Omega) \cap H^{1}_{0}(\Omega)$ and $H^{s - 1 - k}(\Omega)$ along with
		Assumption \ref{ASSUMPTION_LINEAR_HEAT_EQUATION}.1 and Equations (\ref{EQUATION_PARTIAL_T_K_THETA_PRELIMIARY_REGULARITY}), (\ref{EQUATION_PARTIAL_T_K_THETA_PRELIMIARY_REGULARITY_S_MINUS_1}),
		we inductively obtain (beginning at $k = s - 2$ and going downward to $k = 0$)
		\begin{equation}
			\partial_{t}^{k} \theta \in C^{0}\big([0, T], H^{s + 1 - k}(\Omega) \cap H^{1}_{0}(\Omega)\big) \text{ for } k = 0, 1, \dots, s - 2. \notag
		\end{equation}
		The `remaining' case $k = s - 1$ has already been treated in the previous step so that
		\begin{equation}
			\partial_{t}^{s - 1} \theta \in C^{0}\big([0, T], H^{1}_{0}(\Omega)\big) \cap
			H^{1}\big(0, T; L^{2}(\Omega)\big) \cap L^{2}\big(0, T; H^{2}(\Omega) \cap H^{1}_{0}(\Omega)\big). \notag
		\end{equation}
		\noindent {\em Energy estimate.}
		The energy estimate easily follows from the solution operator continuity.
	\end{proof}

\end{appendix}

\section*{Acknowledgment}
{\small
Michael Pokojovy and Xiang Wan have been partially supported by a research grant from the Young Scholar Fund
funded by the Deutsche Forschungsgemeinschaft (ZUK 52/2) at the University of Konstanz, Germany.
Michael Pokojovy would like to express his gratitude to Irena Lasiecka and Roberto Triggiani
for the kind hospitality during his research stays at the University of Virginia at Charlottesville, VA as well as the University of Memphis, TN.
A partial financial support of Michael Pokojovy through the Department of Mathematics at the University of Virginia is also greatly appreciated.
}

\bibliographystyle{plain}
\bibliography{bibliography}
\end{document}